\newcommand{\abs}[1]{\left\vert #1 \right\vert}
\DeclareMathOperator{\aChow}{CH}             		
\DeclareMathOperator{\atomfree}{af}
\newcommand{\af}{{\atomfree}}
\DeclareMathOperator{\at}{at}					
\DeclareMathOperator{\augmented}{aug}
\newcommand{\aug}{{\augmented}}
\renewcommand{\bar}[1]{\overline{#1}}
\DeclareMathOperator{\Chow}{\underline{CH}}		
\DeclareMathOperator{\cl}{cl}
\DeclareMathOperator{\coat}{coat}
\DeclareMathOperator{\coker}{Coker}
\renewcommand{\emptyset}{\varnothing}
\renewcommand{\epsilon}{\varepsilon}
\renewcommand{\hat}[1]{\widehat{#1}}
\DeclareMathOperator{\HF}{\mathrm{HF}}
\DeclareMathOperator{\HS}{\mathrm{H}}
\DeclareMathOperator{\init}{in}						
\newcommand{\iso}{\cong}
\DeclareMathOperator{\Ker}{Ker}
\renewcommand{\ker}{\Ker}
\newcommand{\kk}{\mathbb{K}}					
\DeclareMathOperator{\LL}{\mathcal{L}}
\renewcommand{\phi}{\varphi}
\DeclareMathOperator{\Poin}{P}
\newcommand{\QQ}{\mathbb{Q}}
\DeclareMathOperator{\reg}{reg}					
\DeclareMathOperator{\rk}{rk}
\renewcommand{\setminus}{\smallsetminus}	
\newcommand{\tensor}{\otimes}
\renewcommand{\tilde}[1]{\widetilde{#1}}
\DeclareMathOperator{\Tor}{Tor}
\newcommand{\term}[1]{\textbf{\textsf{#1}}}
\newcommand{\ZZ}{\mathbb{Z}}
\newcommand{\lattice}[3][]{
	\begin{tikzpicture}[#1]          
	\newcommand*\points{#2}     
	\newcommand*\edges{#3}          
	\newcommand*\scale{0.015}          
	\foreach \x/\y/\z/\w in \points {
		\node (\w) at (\x, \y) {\z}; 
	}
	\foreach \x/\y in \edges { \draw (\x) -- (\y); }      
	\end{tikzpicture}
}
\newtheorem{thm}{Theorem}[section]
\newtheorem{lemma}[thm]{Lemma}
\newtheorem{prop}[thm]{Proposition}
\newtheorem{cor}[thm]{Corollary}
\newtheorem*{main-thm}{Main Theorem}
\theoremstyle{definition}
\newtheorem{defn}[thm]{Definition}
\newtheorem{example}[thm]{Example}
\newtheorem{rmk}[thm]{Remark}
\newtheorem{question}[thm]{Question}
\newtheorem*{notation}{Notation}
\numberwithin{equation}{section}
\numberwithin{table}{section}
\title[Chow rings of matroids]{Chow rings of matroids are Koszul}
\author[M. Mastroeni]{Matthew Mastroeni}
\address{Iowa State University, Department of Mathematics, Ames, IA, USA}
\email{mmastro@iastate.edu}
\author[J. McCullough]{Jason McCullough}
\address{Iowa State University, Department of Mathematics, Ames, IA, USA}
\email{jmccullo@iastate.edu}
\begin{document}

\subjclass[2020]{Primary: 16S37, 13E10, 05B35; Secondary: 13H10, 05E40}

\keywords{Koszul algebra, Chow ring, matroid, lattice, Gorenstein ring}

\begin{abstract}
Chow rings of matroids were instrumental in the resolution of the Heron-Rota-Welsh Conjecture by Adiprasito, Huh, and Katz and in the resolution of the Top-Heavy Conjecture by Braden, Huh, Matherne, Proudfoot, and Wang.   The Chow ring of a matroid is a commutative, graded, Artinian, Gorenstein algebra with linear and quadratic relations defined by the matroid.  Dotsenko conjectured that the Chow ring of any matroid is Koszul.  The purpose of this paper is to prove Dotsenko's conjecture.  We also show that the augmented Chow ring of a matroid is Koszul.  As a corollary, we show that the Chow rings and augmented Chow rings of matroids have rational Poincar\'{e} series.
\end{abstract}

\maketitle

\begin{spacing}{1.1}

\section{Introduction}

Let $M$ be a simple matroid on a finite set $E$.  The Chow ring of $M$ is the quotient ring
\[\Chow(M) := \underline{S}_M/(\underline{I}_M + \underline{J}_M),\]
where
\[\underline{S}_M = \QQ[x_F \mid F \text{ is a nonempty flat of } M],\]
and $\underline{I}_M$ and $\underline{J}_M$ are the ideals:
\begin{align*}
\underline{I}_M &= ({\textstyle \sum_{i \in F}} \, x_F \mid i \in E)\\[1 ex]
\underline{J}_M &= \left(x_Fx_G \mid F,G \text{ are incomparable, nonempty flats of } M\right).
\end{align*}
In the case of matroids associated to complex hyperplane arrangements, Chow rings of matroids first appeared in the work of de Concini and Procesi \cite{DP95a} as the cohomology rings of wonderful compactifications of arrangement complements.  Building on this work, Feichtner and Yuzvinsky \cite{FY04} later showed how to associate a smooth toric variety to any finite atomic lattice and some extra combinatorial data called a building set, and they computed a similar presentation for its Chow ring.  What is now called the Chow ring of a matroid is the Chow ring of its lattice of flats with respect to its maximal building set; see Subsection~\ref{SSachow} for more details. 

Chow rings of matroids and their augmented variants (see Section \ref{Saugmented}) have garnered significant attention in recent years due to the important role they have played in the proof of the longstanding Heron-Rota-Welsh Conjecture \cite{AHK18} and more recently in the proof of the Top-Heavy Conjecture \cite{BHMPW20b}.  The proofs of these purely combinatorial conjectures rely heavily on showing that the Chow ring of a matroid has a number of nice algebraic properties; it is shown in \cite{AHK18} that $\Chow(M)$ is a graded, Artinian $\QQ$-algebra with Poincar\'{e} duality and versions of the hard Lefschetz and Hodge-Riemann conditions.  In particular, $\Chow(M)$ is a Gorenstein algebra admitting a presentation by quadratic relations, and so, it is natural to ask whether it is also a Koszul algebra. 

In fact, recent work of Dotsenko \cite{Dot20} shows that the cohomology ring of $\mathcal{\overline{M}}_{0,n}$, the compactification of the moduli space of $n$ marked points on $\mathbb{P}_\mathbb{C}^1$, is Koszul; these rings are Chow rings of intersection lattices of braid arrangements (with respect to the minimal building set).  Based on this work, Dotsenko explicitly conjectures that $\Chow(M)$ is always Koszul  \cite[Conjecture 5.3]{Dot20}.  Our first main result is to confirm Dotsenko's conjecture.\footnote{There is potential for confusion on terminology here.  We follow \cite{AHK18} by defining the Chow ring of a matroid to be the Chow ring of the associated geometric lattice with respect to the maximal building set.  Our main Theorem~\ref{mainthm1} shows that all such Chow rings are Koszul.  As the cohomology ring of $\mathcal{\overline{M}}_{0,n}$ is a Chow ring with respect to the minimal building set, our theorem does not recover Dotsenko's result.  We explain this in greater detail in Subsection~\ref{SSachow}.  We also note that the more general statement that the Chow ring of a geometric lattice with respect to any building set is Koszul is false; see Example~\ref{nonKoszul:building:set}.  
}

\begin{thm}\label{mainthm1}
For any simple matroid $M$, $\Chow(M)$ is Koszul.
\end{thm}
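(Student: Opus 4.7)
My plan is to prove Koszulness of $\Chow(M)$ by induction on the rank and size of $M$, via the construction of an explicit \emph{Koszul filtration}. Recall that a Koszul filtration on a standard graded $\QQ$-algebra $A$ with graded maximal ideal $\mathfrak{m}$ is a family $\mathcal{F}$ of ideals, each generated by linear forms, containing $0$ and $\mathfrak{m}$, with the property that every nonzero $I \in \mathcal{F}$ admits a decomposition $I = J + (\ell)$ with $J \in \mathcal{F}$ properly contained in $I$, $\ell \in A_1$, and $(J : \ell) \in \mathcal{F}$. Existence of such a family is a well-known sufficient condition for $A$ to be Koszul, and it avoids the need to produce a quadratic Gr\"obner basis (which, for Chow rings of matroids in their standard presentation, seems not to be available in general).

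\textbf{Candidate filtration and inductive step.} I would take $\mathcal{F}$ to consist of ideals of the form $I_\Sigma := (x_F : F \in \Sigma)$, where $\Sigma$ ranges over a class of subsets of $\LL(M) \setminus \{\emptyset, E\}$ chosen to be closed under the relevant colon and quotient operations. The most natural first attempt is to let $\Sigma$ range over order filters (upper sets) in the lattice of flats, though the colon computations may force the enlargement of this class to include unions of an upper set with certain other combinatorially nice subsets. Given nonzero $I_\Sigma \in \mathcal{F}$, I would choose $F_0 \in \Sigma$ of extremal rank (most naturally, maximal), set $\ell = x_{F_0}$ and $J = I_{\Sigma \setminus \{F_0\}}$, and study $(J : x_{F_0})$. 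The incomparability relations $x_{F_0} x_G = 0$ put every $x_G$ with $G$ incomparable to $F_0$ into the colon ideal; the linear relations $\sum_{i \in F} x_F \in \underline{I}_M$ allow one to rewrite further variables and uncover additional linear generators. The induction hypothesis on proper minors enters via the classical identifications of subrings and quotients of $\Chow(M)$ with Chow rings of restrictions $M|^F$ and contractions $M/F$.

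\textbf{Main obstacle.} The heart of the matter is pinning down the right class of allowed $\Sigma$ so that $(J : x_{F_0})$ is again $I_{\Sigma'}$ for some $\Sigma'$ in that class, and then verifying closure. The difficulty is the subtle interplay between the linear relations (one per atom of $M$) and the quadratic incomparability relations: a computation of a colon ideal in the \emph{presentation} $\underline{S}_M / (\underline{I}_M + \underline{J}_M)$ can produce products of variables that only collapse to linear forms after repeated application of the linear relations, and one must argue that the resulting linear forms still cut out a set of flats in the allowed class. A secondary, but likely tractable, obstacle is carrying out the analogous argument for the augmented Chow ring, where the filtration must be adapted to the extra generator $y$ appearing in the augmented presentation; the rationality of the Poincar\'e series in both cases then follows formally from Koszulness together with the known Hilbert series.
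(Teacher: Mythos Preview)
Your overall strategy matches the paper's: both prove Koszulness by exhibiting a Koszul filtration, and your first candidate family (ideals indexed by upper sets of flats) is exactly the family $\mathcal{F}_0$ in the paper's Theorem~4.7. You are also right that this family alone is not closed under the required colon operation and must be enlarged, and you have correctly located this as the main difficulty.

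However, there is a genuine gap: you have not identified \emph{how} to enlarge the family, and this is the crux of the argument rather than a routine verification. The paper shows (Example~4.6) that for arbitrary sets $\mathcal{H}$ of hyperplanes, the colon $(x_H \mid H \in \mathcal{H}) : x_{H'}$ need \emph{not} be generated by linear forms---quadratic minimal generators already appear for $U_{5,6}$, and even restricting to ``locally connected'' sets of hyperplanes does not fix this. So one cannot simply take all sets of flats, or all sets of flats of a given rank, as the enlargement. The paper's resolution is to fix a specific total order $\prec$ on $\LL(M)$ (a \emph{total coatom ordering}, Definition~3.2, shown to exist for every geometric lattice in Theorem~3.3) and to adjoin to $\mathcal{F}_0$ the ideals
\[
(x_G \mid G \nsubseteq F) + (x_G \mid G \in \mathcal{G}),
\]
where $\mathcal{G}$ is an \emph{initial segment} of $\coat(F)$ under $\prec$. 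The two axioms of a total coatom ordering are precisely what force the colon in Proposition~4.5(c) to be linearly generated and to be again of this same shape. Without this ordering device or an equivalent, your inductive step does not close.

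Two secondary points. First, the paper works in the \emph{atom-free presentation} (eliminating the rank-one variables and retaining $x_E$), not the presentation with $x_E$ eliminated; this makes available the Gr\"obner basis and nested-monomial basis (Corollaries~4.2--4.3) on which all the colon computations rest. Second, your choice to peel off a flat of \emph{maximal} rank is opposite to the paper's: for the up-set family the paper removes a \emph{minimal} element $G'$ of $\mathcal{G}$, so that $(J:x_{G'})$ lands in the second family $\mathcal{F}_1$ via Corollary~4.4.
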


Although it is well-known that all Koszul algebras have quadratic presentations (we will call such algebras quadratic),  not all quadratic algebras are Koszul.  In fact, it is not even guaranteed that a quadratic algebra will be Koszul if it is also Artinian and Gorenstein; the first non-Koszul such algebra was constructed by Matsuda \cite{Mat18}.  A thorough study of the Koszul properties of quadratic, Gorenstein algebras was conducted by Schenck, Stillman, and the first author in \cite{MSS21, MSS22}.  In particular, they determined (with only 3 exceptions) the ordered pairs $(r,c)$ for which there is a non-Koszul, quadratic, Gorenstein algebra of characteristic zero with regularity $r$ and codimension $c$; see also \cite{MS20}.  

In order for a quadratic algebra to be Koszul, it is sufficient for the defining ideal of relations to have a quadratic Gr\"obner basis, yet there are Koszul algebras whose defining ideals have no quadratic Gr\"obner bases with respect to any monomial order or any linear change of variables.  Previously, Feichtner and Yuzvinsky \cite{FY04} gave a non-quadratic Gr\"obner basis for a more general construction of the Chow ring of an atomic lattice (see Section~\ref{Sback}).  An argument similar to \cite[Proposition 3.1]{Dot20} shows that there is no possible quadratic Gr\"obner basis for $\underline{I}_M + \underline{J}_M$ in the given presentation. 

Our approach to showing that $\Chow(M)$ is always Koszul involves the construction of a Koszul filtration (see Theorem~\ref{Chow:rings:of:matroids:are:Koszul}).  A key ingredient in the construction is the study of lattices that admit what we call a total coatom ordering.  This is an ordering on the elements of the lattice satisfying certain properties similar to that of a shellable simplicial complex.  This aspect of the paper was inspired by and is similar to the notion of a strongly shellable simplicial complex of Guo, Shen, and Wu \cite{GSW19}.  A very similar notion is implicit in the work of Delucchi \cite{Del08}.  In Section~\ref{Smeetshellable}, we show that the lattice of flats of a matroid always admits a total coatom ordering thereby making it possible to always construct a Koszul filtration on $\Chow(M)$.  

It is also natural to study the Poincar\'{e} series of $\Chow(M)$.  The Poincar\'{e} series of a $\kk$-algebra $R$, denoted $\Poin_\kk^R(t)$, is the generating function of the total Betti numbers of $\kk$ over $R$:
\[\Poin_\kk^R(t) := \sum_{t \ge 0} \dim_\kk \Tor^R_i(\kk,\kk)t^i.\]
For many years, it was expected that all such Poincar\'{e} series were rational; this expectation became known as the Serre-Kaplansky Problem.  In \cite[Example 7.1]{Ani82}, Anick gave the first example of a commutative $\kk$-algebra with irrational Poincar\'{e} series.  Later examples of Artinian, Gorenstein algebras with irrational Poincar\'{e} series included one by B{\o}gvad \cite{Bog83}.  We observe that this algebra is quadratic (see Example~\ref{EXirrational}).  Thus, not all quadratic, Artinian, Gorenstein $\kk$-algebras have rational Poincar\'{e} series.  Our second main result, a corollary to the first, is the following.

\begin{thm}\label{mainthm2} 
For any simple matroid $M$, $\Chow(M)$ has a rational Poincar\'{e} series.
\end{thm}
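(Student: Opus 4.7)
The plan is to deduce Theorem~\ref{mainthm2} as an essentially formal consequence of Theorem~\ref{mainthm1} together with the classical relationship between the Poincar\'{e} series and Hilbert series of a Koszul algebra. Specifically, Fr\"{o}berg's theorem states that whenever $R$ is a Koszul $\kk$-algebra, the two formal power series satisfy
\[ \Poin_\kk^R(t) \cdot \HS_R(-t) = 1. \]
Given Theorem~\ref{mainthm1}, this is the only external ingredient needed.

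First, I would invoke Theorem~\ref{mainthm1} to conclude that $\Chow(M)$ is a Koszul $\QQ$-algebra. Next, I would observe that $\Chow(M)$ is graded Artinian (this is recalled in the introduction, and follows from the work of Adiprasito-Huh-Katz), hence its Hilbert series $\HS_{\Chow(M)}(t)$ is a polynomial in $\ZZ[t]$ with nonzero constant term $1$. Consequently $\HS_{\Chow(M)}(-t)$ is a polynomial with constant term $1$, and in particular it is a unit in $\QQ(t)$. Applying Fr\"{o}berg's identity then yields
\[ \Poin_\QQ^{\Chow(M)}(t) = \frac{1}{\HS_{\Chow(M)}(-t)}, \]
which exhibits the Poincar\'{e} series as a rational function of $t$, proving the theorem.

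There is no serious obstacle beyond Theorem~\ref{mainthm1}: the entire content of the corollary is concentrated in establishing Koszulness, after which rationality is automatic. The only minor subtlety to verify is that Fr\"{o}berg's formula applies in the graded Artinian setting over $\QQ$, which it does since the standard proof (via the minimal graded free resolution of $\kk$ being linear in the Koszul case) only requires that $R$ be a connected graded $\kk$-algebra with $R_0 = \kk$. Thus the corollary follows immediately once Theorem~\ref{mainthm1} is in hand.
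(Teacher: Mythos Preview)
Your proposal is correct and matches the paper's own argument essentially verbatim: the paper also deduces rationality directly from Theorem~\ref{mainthm1} via Fr\"{o}berg's identity $\Poin_\kk^A(t)\HS_A(-t)=1$, noting that the Hilbert series is rational (you invoke Artinianness to get a polynomial, the paper cites general rationality of Hilbert series, but either suffices).
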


We also consider the augmented Chow ring $\aChow(M)$ of a matroid $M$.  These algebras were introduced in \cite{BHMPW20a} as an analog of the Chow ring of the augmented wonderful variety associated to a hyperplane arrangement in the realizable case.  Like the Chow ring of a matroid, these are quadratic, Artinian, Gorenstein algebras satisfying the K\"ahler package.  We show (Theorem~\ref{augmented:Chow:rings:of:matroids:are:Koszul}) that the augmented Chow ring of a matroid is also Koszul.  

The rest of the paper is organized as follows.  Section~\ref{Sback} collects necessary definitions and background results on lattices, matroids, Chow rings, and Koszul algebras.  Section~\ref{Smeetshellable} contains a proof that all geometric lattices admit a total coatom ordering.  Section~\ref{Smain} contains the proofs of our main results for Chow rings of matroids, while Section~\ref{Saugmented} contains the analogous results for augmented Chow rings of matroids.  A final Section~\ref{Sexamples} collects some examples and questions.

\section{Background}\label{Sback}

In this section, we collect some necessary notation and background for our main results.  The reader can safely skip to Section~\ref{Smeetshellable} and review this section as necessary.

\begin{notation}
Throughout this section, $R = \oplus_{i \ge 0} R_i$ will denote a commutative graded $\kk$-algebra finitely generated as an algebra over $R_0 = \kk$ by elements of degree one.  We call such an algebra \term{standard graded} and set $R_+ = \oplus_{i \ge 1} R_i$ to be the graded maximal ideal.
\end{notation} 

\subsection{Hilbert functions and free resolutions}\label{SSkoszul}

 Let $M = \oplus_{i \in \ZZ} M_i$ be a finitely generated, graded $R$-module.  The \term{Hilbert function} of $M$ is defined as $\HF_M(i) := \dim_\kk M_i$, and its generating function is the \term{Hilbert series} of $M$, denoted $\HS_M(t) := \sum_{i \ge 0} \HF_M(i) t^i$.  The module $M$ has a \text{minimal graded free resolution} $\mathbf{F}_\bullet$ over $R$, which is an exact sequence of the form
\[\mathbf{F}_\bullet: \cdots \xrightarrow{\partial_{i+1}} F_i \xrightarrow{\partial_{i}} F_{i-1} \xrightarrow{\partial_{i-1}} \cdots \xrightarrow{\partial_2} F_1 \xrightarrow{\partial_1} F_0,\]
where $M \iso \coker(\partial_1)$, each $F_i = \oplus_{j} R(-j)^{\beta^R_{i,j}(M)}$ is a finite-rank, graded, free $R$-module, and all maps are graded homomorphisms.  Here, $R(-j)$ is the rank-one free $R$-module with graded components $R(-j)_i = R_{i-j}$.  The numbers $\beta^R_{i,j}(M)$ are the \term{graded Betti numbers} of $M$ over $R$.  The \term{total Betti numbers} are denoted 
\[ \beta_i^R(M) := \sum_j \beta_{i,j}^R(M),\]  
and their generating function
\[ \Poin_M^R(t) = \sum_{i \ge 0} \beta_i^R(M)t^i \]
is called the \term{Poincar\'{e} series} of $M$.  The \term{regularity} of $M$ is 
\[ \reg_R(M) = \max\{j-i \mid \beta_{i,j}^R(M) \neq 0 \text{ for some $i$}\}. \]  
When no maximum exists, we say that $M$ has infinite regularity.  When $M$ is Artinian and $R$ is a polynomial ring, $\reg_R(M) = \max\{i \mid M_i \neq 0\}$.  See \cite{Eis05} for connections between regularity, free resolutions, and local cohomology.

\subsection{Koszul algebras}\label{SSKoszul}
A standard graded $\kk$-algebra $R$ is called \term{Koszul} if $\kk$ has a linear free resolution over $R$; equivalently, $R$ is Koszul if and only if $\reg_R(\kk) = 0$.  This definition extends to the noncommutative case, but all rings we consider in this paper are commutative.  Koszul algebras are ubiquitous in algebraic geometry and topology; they appear as coordinate rings of canonical curves \cite{VF93}, quotients of polynomial rings by quadratic monomial ideals \cite{Fro75}, and all sufficiently high Veronese subalgebras of standard graded $\kk$-algebras \cite{Bac86}.  There are several equivalent conditions to being Koszul.  Notably, Fr\"oberg \cite{Fro99} showed that $R$ is Koszul if and only if 
\begin{align}\label{PHS1}\Poin_\kk^{\,R}(t)\HS_R(-t) &= 1.\end{align}
Moreover, it follows from work of Avramov, Eisenbud, and Peeva \cite{AE92, AP01} that $R$ is Koszul if and only if all finite, graded $R$-modules have finite regularity.  See \cite{Con14} and \cite{Fro99} for surveys of the theory of Koszul algebras.  

Any commutative Koszul algebra has a presentation $S/I$, where $S = \kk[x_1,\ldots,x_n]$ is a standard graded polynomial ring over $\kk$ and $I$ is an ideal generated by quadrics (homogeneous polynomials of degree $2$); we call such rings \term{quadratic}.  Not all quadratic $\kk$-algebras are Koszul.  It is sufficient for $I$ to have a quadratic Gr\"obner basis with respect to some monomial order, perhaps after a change of coordinates; such algebras are called \term{G-quadratic}. Yet not all Koszul algebras are G-quadratic; see \cite[Remark 1.14]{Con14}.  If $\ell \in R$ is a nonzerodivisor, then $R$ is Koszul if and only if $R/\ell R$ is Koszul; see e.g. \cite[Theorem 3.1]{CDR13}.  

When considering the Chow ring of a matroid in the introduction, there are two approaches one might take to prove the Koszul property.  Since quadratic monomial ideals, like $\underline{J}_M$, are Koszul, if the linear forms in $\underline{I}_M$ formed a regular sequence on $S_M/\underline{J}_M$, one could conclude Koszulness for $\Chow(M)$ immediately; however, in general, these linear forms do not form a regular sequence.  Another natural approach is to look for a quadratic Gr\"obner basis for $\underline{I}_M + \underline{J}_M$.  Yet, as Dotsenko has noted \cite{Dot20}, in the given presentation there are obstructions that prevent the existence of a quadratic Gr\"obner basis.  It is not clear if a change of basis would resolve this obstruction.  We address this issue in Section~\ref{Sexamples}.

In the absence of a quadratic Gr\"obner basis, one way to prove that a $\kk$-algebra has the Koszul property is to construct a Koszul filtration, a concept first formally defined by Conca, Trung, and Valla.  

\begin{defn} Let $R$ be a standard graded $\kk$-algebra.  A family $\mathcal{F}$ of ideals of $R$ is said to be a \term{Koszul filtration} of $R$ if the following conditions hold.
\begin{enumerate}
    \item Every ideal $I \in \mathcal{F}$ is generated by linear forms.
    \item The ideals $(0)$ and $R_+$ are in $\mathcal{F}$.
    \item For every nonzero $I \in \mathcal{F}$, there exists an ideal $J \in \mathcal{F}$ such that $J \subsetneq I$, $I/J$ is cyclic, and $(J : I) \in \mathcal{F}$.
\end{enumerate}
\end{defn}

If $R$ has a Koszul filtration, it follows from an inductive argument on the ideals in the filtration that $R$ is Koszul \cite[Proposition~1.2]{CTV01}.

\subsection{Lattices}\label{SSlattices}

Recall that a (finite) \term{lattice} is a finite partially ordered set $(\LL,\le)$ in which every two elements $a,b \in \LL$ have a unique least upper bound called the \term{join}, denoted $a \vee b$, and a unique greatest lower bound called the \term{meet}, denoted $a \wedge b$.  Consequently, all nonempty lattices have a unique minimum element and maximum element, denoted $\hat{0}$ and $\hat{1}$ respectively.

Given $a,b \in \LL$ with $a \leq b$, the \term{intervals} $[a,b]$ and $(a, b]$ are the sets: 
\begin{align*}
[a,b] &= \{c \in \LL \mid a \le c \leq b\} \\[1 ex] 
(a,b] &= \{c \in \LL \mid a < c \le b\}.
\end{align*}  
Clearly, every interval $[a, b]$ is also a lattice.  If $a < b$ and there are no elements $c \in \LL$ with $a < c < b$, then we say that $b$ \term{covers} $a$ or $b$ is a \term{cover} of $a$, and we write $a \lessdot b$.  The \term{Hasse diagram} of $\LL$ is a graph with vertices labeled by the elements of $\LL$ positioned in such a way that if $a < b$ then vertex $a$ is lower than vertex $b$; an edge from $a$ to $b$ is drawn exactly when $b$ covers $a$.  See Figure \ref{figure:trunc} for an example.    

The elements $a \in \LL$ that cover $\hat{0}$ are called the \term{atoms} of $\LL$, collectively denoted $\at(\LL)$.  The elements $b \in \LL$ covered by $\hat{1}$ are called the \term{coatoms} of $\LL$, collectively denoted $\coat(\LL)$.  For each element $\hat{0} \neq a \in \LL$, we also let $\coat(a)$ denote the set of elements of $\LL$ covered by $a$, which are precisely the coatoms of the lattice $[\hat{0}, a]$.

The lattice $\LL$ is \term{atomic} if every element of $\LL$ is a join of finitely many atoms.  We say that $\LL$ is \term{graded} (or \term{ranked}) if there is a function $\rk:\LL \to \ZZ_{\ge 0}$ such that $\rk \hat{0} = 0$ and if $a,b \in \LL$ and $a \lessdot b$, then $\rk b = \rk a + 1$, in which case the number $\rk \LL  = \max\{\rk a \mid a \in \LL\}$ is called the \term{rank} of $\LL$.  The lattice $\LL$ is \term{semimodular} if for all $a, b \in \LL$ such that $a \wedge b \lessdot a$ and $a \wedge b \lessdot b$, then $a \lessdot a \vee b$ and $b \lessdot a \vee b$; equivalently, $\LL$ is semimodular if it is graded and its rank function satisfies
\[\rk a + \rk b \ge \rk(a \wedge b) + \rk(a \vee b),\]
for all $a,b \in \LL$.  Finally, a lattice is called \term{geometric} if it is finite, atomic, and semimodular.  For basic results on lattices, we refer the reader to \cite{Sta12}.

\subsection{The lattice of flats of a matroid}\label{SSmatroids}

A \term{matroid} $M$ is a pair $(E,\mathcal{I})$ consisting of a finite set $E$, called the \term{ground set} of $M$, and a collection $\mathcal{I}$ of subsets of $E$ satisfying three properties:
\begin{enumerate}
    \item $\varnothing \in \mathcal{I}$.
    \item If $I \in \mathcal{I}$ and $I' \subseteq I$, then $I' \in \mathcal{I}$.
    \item If $I_1,I_2 \in \mathcal{I}$ and $|I_1| < |I_2|$, then there exists an element $e \in I_2 \smallsetminus I_1$ such that $I_1 \cup e \in \mathcal{I}$.
\end{enumerate}

\begin{notation}
In the above definition and the subsequent sections, we abuse notation and identify elements $e \in E$ with the corresponding set $\{e\}$.  Thus, if $F \subseteq E$, we will frequently write $F \cup e$ in place of $F \cup \{e\}$.
\end{notation}

The members of $\mathcal{I}$ are called \term{independent sets} of $M$.  A subset of $E$ that is not in $\mathcal{I}$ is called \term{dependent}.  A maximal independent set is called a \term{basis}.  All bases of a matroid have the same cardinality, called the \term{rank} of $M$.  Given a subset $X \subseteq E$, the \term{rank} of $X$, denoted $\rk_M X$, is the cardinality of the largest independent set contained in $X$; we drop the subscript when the matroid is clear from context.  

The \term{closure} of a subset $X \subset E$ in $M$ is 
\[ \cl(X) = \{e \in E \mid \rk(X \cup e) = \rk X \}.\]  
A subset $F$ is called a \term{flat} of $M$ if $F = \cl(F)$.  A \term{hyperplane} of $M$ is a flat $H$ of rank $\rk H = \rk M - 1$ is called.  One can give an equivalent definition of matroid as a pair $(E,\LL)$ consisting of a finite set $E$ and a collection $\LL$ of subsets of $E$, called flats, satisfying:
\begin{enumerate}
    \item If $F,G \in \LL$, then $F \cap G \in \LL$.
    \item If $F \in \LL$ and $e \in E \smallsetminus F$, then there is a unique flat $G \in \LL$ that minimally contains $F \cup e$.
\end{enumerate}
Matroids can also be characterized by their rank functions, by their bases, or by their minimal dependent sets.

The set of flats of $M$, denoted $\LL(M)$, has the structure of a lattice; for any two flats $F,G \in \LL(M)$, the meet is the intersection, $F \wedge G = F \cap G$, and the join is the closure of the union, $F \vee G = \cl(F \cup G)$.  The atoms of $\LL(M)$ are precisely the rank-one flats, and the coatoms of $\LL(M)$ are precisely the hyperplanes of $M$.  A lattice is geometric if and only if it is isomorphic to the lattice of flats of some matroid \cite[Theorem 1.7.5]{Oxl11}.  However, this correspondence is not quite bijective, as different matroids can have isomorphic lattices of flats. 

An element $e \in E$ is a \term{loop} of $M$ if the set $\{e\}$ is dependent.  
If $e,f \in E$ are not loops, then $e$ and $f$ are \term{parallel} if $\{e,f\}$ is dependent.  A matroid is \term{simple} if it has no loops and no pairs of parallel elements.  For any matroid $M$, there is a unique simple matroid (up to isomorphism) whose lattice of flats is isomorphic to $\LL(M)$, called the  \term{simplification} of a matroid $M$.  It can be constructed as the matroid on the set of rank-one flats of $M$ such that a set of flats $\{Y_1, \dots,Y_t\}$ is independent if and only if $\rk_M(Y_1 \vee \cdots \vee Y_t) = t$.  

\begin{rmk}
Since it is clear from the definition of the Chow ring of a matroid that any two matroids with isomorphic lattices of flats will have isomorphic Chow rings, we can always replace a matroid $M$ with its simplification, and so, we may henceforth assume that all matroids under consideration are simple without any loss of generality. 
\end{rmk}

There are two constructions for producing new matroids from a given matroid that will play an important role in the subsequent sections.  Given a matroid $M$ on a ground set $E$ and a subset $S \subseteq E$, the \term{restriction} of $M$ to $S$ is the matroid $M\vert S$ on the ground set $S$ whose flats are of the form $F \cap S$ for some flat $F$ of $M$.  In particular, when $H$ is a flat of $M$, every flat of $M\vert H$ is also a flat of $M$ so that $M \vert H$ is a matroid quotient of $M$, and the lattice of flats of $M\vert H$ is just the interval $[\emptyset, H]$ in $\LL(M)$.  The \term{truncation} of $M$ is the matroid $T(M)$ on the ground set $E$ with rank function $\rk_{T(M)} X = \max\{\rk_M X, \rk M - 1\}$.  The lattice of flats of the truncation $T(M)$ is obtained by removing all the hyperplanes from the lattice of flats of $M$.  We refer the reader to \cite{Oxl11} for further details about these constructions as necessary.

\subsection{Chow rings of atomic lattices}\label{SSachow}

We now recall the more general definition of the Chow ring of an atomic lattice from \cite{FY04}.  Let $\LL$ be an atomic lattice.  A subset $\mathcal{G} \subseteq \LL\smallsetminus\{\hat 0\}$ is a \term{building set} if for any $G \in \LL\smallsetminus \{\hat 0\}$, there is a poset isomorphism
\[\phi_G:\prod_{i = 1}^t \,[\hat0,G_i] \to [\hat0,G],\]
where $\max \mathcal{G}_{\leq G} =  \{G_1,\ldots,G_k\}$ and $\phi_G(\hat0,\ldots,\hat0,G_i,\hat0,\ldots,\hat0) = G_i$ for $1 \leq i \leq t$.  Here, $\max \mathcal{G}_{\leq G}$ denotes the set of maximal elements among all elements in $\mathcal{G}$ that are less than or equal to $G$.  Every atomic lattice $\LL$ has a unique maximal building set $\mathcal{G}_{\max} = \LL \smallsetminus \{\hat 0\}$ and a unique minimal building set $\mathcal{G}_{\min}$ consisting of the irreducible elements in $\LL \setminus \{\hat{0}\}$. (The atoms of $\LL$ are always irreducible, but in general, there can be more irreducibles than just the atoms; see \cite[Section~2]{incidence:combinatorics}.)  Given any building set $\mathcal{G}$ of $\LL$, a subset $\mathcal{S} \subseteq \mathcal{G}$ is called \term{nested} if for any pairwise incomparable elements $G_1,\ldots,G_t \in \mathcal{S}$, either $t \le 1$ or $G_1 \vee \cdots \vee G_t \notin \mathcal{G}$.  The set of nested subsets of $\mathcal{G}$ is an abstract simplicial complex denoted by $\mathcal{N}(\LL,\mathcal{G})$.  

\begin{defn} Let $\LL$ be a finite, atomic lattice and building set $\mathcal{G}$ for $\LL$.  Then the \term{Chow ring of $\LL$ with respect to $\mathcal{G}$} is the algebra
\[D(\LL,\mathcal{G}) := \mathbb{Z}[x_G\mid G \in \mathcal{G}]/\mathcal{I},\]
where $\mathcal{I}$ is generated by
\[ \prod_{i = 1}^t G_i \qquad \text{for} \qquad \{G_1,\ldots,G_t\} \notin \mathcal{N}(\LL,\mathcal{G})\]
and
\[\sum_{G \ge A} x_G \qquad \text{for} \qquad A \in \at(\LL).\]
\end{defn}
\noindent Note that $D(\LL,\mathcal{G})$ is a quotient of the Stanley-Reisner ring associated to the simplicial complex $\mathcal{N}(\LL,\mathcal{G})$. 

For any building set, Feichtner and Yuzvinsky showed that $D(\LL,\mathcal{G})$ is the Chow ring of a certain smooth, affine, toric variety \cite[Theorem~3]{FY04}.  They also show that for any essential complex hyperplane arrangement $\mathcal{A}$ and any building set $\mathcal{G}$ for the intersection lattice $\LL(\mathcal{A})$ containing $\{0\}$, the cohomology ring of the De Concini and Procesi wonderful compactification $\bar{Y}_{\mathcal{A}, \mathcal{G}}$ of the arrangement \cite{DP95b, DP95a} is isomorphic to $D(\LL(\mathcal{A}),\mathcal{G})$.  

At the other extreme, the Chow ring of a matroid $M$ is the special case corresponding to the maximal building set of the lattice of flats of $M$; that is,
\[\Chow(M) \iso D(\LL(M),\mathcal{G}_{\mathrm{max}}) \tensor_\mathbb{Z} \mathbb{Q}.\]
We stress the importance of which building set one chooses in the following remark. 

\begin{rmk}\label{chow:remark} While reading \cite{BES20} we discovered the following discrepancy:
It is claimed in \cite[Remark 3.2.2]{BES20} that the Chow ring of the graphic matroid $M(K_n)$ of a complete graph on $n$ vertices is the cohomology ring of the Deligne-Mumford space $\bar{\mathcal{M}_{0, n+1}}$, which was shown to be Koszul by Dotsenko.  This claim may be somewhat misleading.  Although Dotsenko proves the cohomology ring of $\bar{\mathcal{M}_{0, n+1}}$ is Koszul \cite{Dot20} and the cohomology ring is a Chow ring associated to the lattice of flats of $M(K_n)$, it is not \emph{the} Chow ring of $M(K_n)$.  The lattice of flats of $M(K_n)$ is isomorphic to the partition lattice $\Pi_n$ of partitions of $\{1, \dots, n\}$ ordered by refinement. The distinction between the two rings comes down to the choice of building set. The Chow ring of $M(K_n)$ is the Chow ring $D(\Pi_n, \mathcal{G}_{\max})$ with respect to the maximal building set $\mathcal{G}_{\max} = \Pi_n \setminus \{\hat{0}\}$.  On the other hand, as pointed out in \cite[\S 7]{FY04}, the cohomology ring of $\bar{\mathcal{M}_{0, n+1}}$ is isomorphic to $D(\Pi_n, \mathcal{G}_{\min})$ with respect to the minimal building set.  These two rings are not isomorphic, which can easily be seen for $n = 4$ from the fact that both rings are Artinian with nondegenerate presentations of different codimensions.  Thus, to the best of our knowledge, there is not any large class of matroids whose Chow rings were previously known to be Koszul.
\end{rmk}

\section{Lattices with total coatom orderings}\label{Smeetshellable}

As Bj\"orner writes in \cite[p. 232]{Bjo92}, \emph{``A number of remarkable properties of matroids are revealed by, but not dependent on, assigning a linear order to the underlying point set.''}
In this section, we follow Bj\"orner's lead and define the notion of a total coatom ordering on a lattice.  The definition is related to and inspired by the definition of a strongly shellable simplicial complex of Guo, Shen, and Wu in \cite{GSW19}, where they showed that the independence complex of a matroid is strongly shellable.  We prove a parallel result (Theorem \ref{geometric:lattices:are:meet-shellable}) that the lattice of flats of a matroid admits a total coatom ordering.  This result is essential to creating the Koszul filtration on the Chow ring of a matroid in Theorem \ref{Chow:rings:of:matroids:are:Koszul}.  

\begin{defn}
Let $\LL$ be a lattice, and let $\prec$ be any total order on $\LL$.  Given an element $F \in \LL$, a set $\mathcal{G} \subseteq \coat(F)$ is called an \term{initial segment covered by $F$} if for all $G, G' \in \coat(F)$ such that $G' \in \mathcal{G}$ and $G \prec G'$, then $G \in \mathcal{G}$.
\end{defn}

\begin{defn} \label{meet-shellable}
We say that a finite lattice $\LL$ admits \term{total coatom ordering} if there is a total order $\prec$ on $\LL$ with the following properties for all $F \in \LL$:
\begin{enumerate}[label = (\roman*)]
\item  \label{locally:rooted}
For all $G, G' \in \coat(F)$, if $G \prec G'$, then there exists $G'' \in \coat(F)$ such that:
\begin{enumerate}[label = (i.\alph*)]
\item  $G'' \prec G'$, 
\item $G' \wedge G'' \in \coat(G')$, and 
\item $G \wedge G' \leq G''$.
\end{enumerate}
\item \label{good:restricted:covers}
If $\mathcal{G} \subseteq \LL$ is an initial segment covered by $F$ and $G'$ is the largest element in $\mathcal{G}$ with respect to $\prec$, then the set 
\[
\coat_\mathcal{G}(G') := \{ G \wedge G' \mid G \in \mathcal{G} \text{ and } G \wedge G' \in \coat(G') \}
\]
is an initial segment covered by $G'$. 
\end{enumerate}
\end{defn}

\noindent The first defining property of a total coatom order can be visualized in the Hasse diagram of $\LL$ as shown in Figure~\ref{figure:tco}; the dashed lines represent relations in the lattice that are not necessarily covering relations.

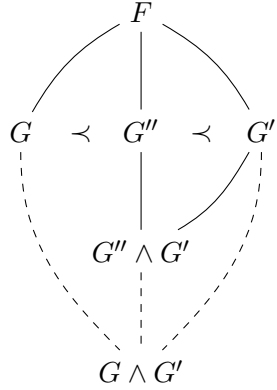
\begin{figure}[th]
\begin{tikzpicture}[scale = 0.8]
\node (F) at (0, 4) [above] {$F$};
\node (G1) at (2, 2) [above] {$G'$};
\node (G2) at (0, 2) [above] {$G''$};
\node (G3) at (-2, 2) [above] {$G$};
\node (G12) at (0, 0) [above] {$G'' \wedge G'$};
\node (G13) at (0, -2) [above] {$G \wedge G'$};
\draw (G2) -- (F);
\draw (G1) to[out = 120, in = -30] (F);
\draw (G3) to[out = 60, in = -150] (F);
\draw (G1) to[out = -120, in = 30] (G12);
\draw (G2) -- (G12);
\node at (1,2) [above] {$\prec$};
\node at (-1,2) [above] {$\prec$};
\draw[dashed] (G12) -- (G13);
\draw[dashed] (G1) to[out = -90, in = 45] (G13);
\draw[dashed] (G3) to[out = -90, in = 135] (G13);
\end{tikzpicture}
\caption{A total coatom order}\label{figure:tco}
\end{figure}

First, we show that geometric lattices have the above property.

\begin{thm} \label{geometric:lattices:are:meet-shellable}
Geometric lattices admit total coatom orderings.
\end{thm}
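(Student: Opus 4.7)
Since $\LL$ is geometric, identify it with the lattice of flats of a simple matroid $M$ on a ground set $E$, so that the atoms of $\LL$ correspond to the elements of $E$. Fix a linear order $e_1 < e_2 < \cdots < e_n$ on $E$, and for each flat $F \in \LL$ define the \emph{greedy basis} $\beta(F)$ to be the basis of $F$ produced by scanning atoms in increasing order and including each atom below $F$ that is independent of those already chosen. Declare $F \prec F'$ iff either $\rk F < \rk F'$, or $\rk F = \rk F'$ and $\beta(F)$ precedes $\beta(F')$ in lexicographic order, viewing each greedy basis as an increasing tuple of indices in $E$. This is a total order on $\LL$, with $\hat 0$ first and $\hat 1$ last.

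For condition (i), the case $G \wedge G' \in \coat(G')$ is handled by $G'' := G$, which trivially satisfies all three requirements. Otherwise $\rk(G \wedge G') \leq \rk G' - 2$, and the interval $[G \wedge G', G']$ is a geometric lattice of rank at least $2$, so it admits coatoms $J$ of $G'$ containing $G \wedge G'$. I would use the matroid exchange axiom to choose such a $J$ together with a small-index atom $a \in G \setminus J$ and set $G'' := J \vee a$; crucially, if $k$ is the first lex disagreement between $\beta(G)$ and $\beta(G')$, the greedy algorithm forces $a_{b_k} \not\leq G'$, which gives us a candidate atom $a = a_{b_k}$ of index strictly less than $b'_k$. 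Semimodularity then yields $G'' \wedge G' = J \in \coat(G')$, the choice of $J$ gives $G \wedge G' \leq J \leq G''$, and the smaller atom produces a basis of $G''$ that is lex-smaller than $\beta(G')$, so by lex-minimality of greedy bases $\beta(G'') <_{\mathrm{lex}} \beta(G')$, giving $G'' \prec G'$.

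For condition (ii), let $\mathcal{G}$ be an initial segment of $\coat(F)$ with $\prec$-maximum $G'$, let $H^* = G^* \wedge G' \in \coat_\mathcal{G}(G')$ for some $G^* \in \mathcal{G}$, and let $H \in \coat(G')$ satisfy $H \prec H^*$. I must exhibit $G \in \mathcal{G}$ with $G \wedge G' = H$. The interval $[H, F]$ is a rank-$2$ geometric lattice, so it contains coatoms of $F$ above $H$ besides $G'$; the task is to produce one that is $\prec G'$, which would then lie in $\mathcal{G}$. My plan is to transfer the lex witness for $H \prec H^*$ inside $\coat(G')$ up to $\coat(F)$ by performing the analogous atom swap in $\beta(G^*)$, yielding a coatom $G \prec G^* \preceq G'$ above $H$. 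The main obstacle is precisely this lifting: the meet map $\coat(F) \setminus \{G'\} \to \coat(G')$ sending $G$ to $G \wedge G'$ is many-to-one, and ensuring that initial segments upstairs project to initial segments downstairs requires delicate control of the lex order using matroid exchange and semimodularity at each step.
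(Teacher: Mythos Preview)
Your total order---lex on the greedy basis---is genuinely different from the paper's, which orders flats by reverse-lex on the \emph{full} decreasing sequence of elements (not on a basis). Your argument for condition (i) is fine: with $a=\beta(G)_k$ the first element where $\beta(G)$ and $\beta(G')$ disagree, one checks $a\notin G'$, and then $G''=J\vee a$ for any coatom $J$ of $G'$ above $G\wedge G'$ does everything required. The lex-minimality of the greedy basis gives $\beta(G'')<_{\rm lex}\beta(G')$ exactly as you say.

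The gap is in (ii). What you have written is a plan, not a proof, and you explicitly flag the lifting step as an unresolved obstacle. The phrase ``perform the analogous atom swap in $\beta(G^*)$'' does not specify which atom to swap or why the result sits above $H$, and the ``delicate control'' you allude to is never supplied. Concretely, the lex witness $c_m$ for $H\prec H^*$ lies in $H\subseteq G'$, so it cannot by itself produce a coatom of $F$ different from $G'$; you need an element \emph{outside} $G'$, and you never pin one down. By contrast, the paper's order is designed so that the witness $j$ for $H\prec H'$ is automatically an element of $H\setminus H'$; setting $H''=\cl(F''\cup j)$ and comparing element sequences then finishes (ii) directly.

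Your order \emph{can} be made to work, but the argument is not the one you sketched. Take $a=\beta(G^*)_k\notin G'$ (the witness for $G^*\prec G'$) and set $G=H\vee a$; then $G\in\coat(F)$, $G\wedge G'=H$, and the point is to show $G\prec G'$. Argue by contradiction: if $\beta(G)$ first exceeds $\beta(G')$ at position $p$, then $p<k$ (since $\{b'_1,\dots,b'_{k-1},a\}\subseteq G$ is independent, forcing $\beta(G)_k<b'_k$), and the equalities $\beta(G)_i=b'_i$ for $i<p$ force $b'_1,\dots,b'_{p-1}\in G\cap G'=H$ while $b'_p\notin H$. But $b'_1,\dots,b'_p\in H^*$, so comparing greedy bases at step $p$ gives $\beta(H)_p>b'_p=\beta(H^*)_p$, i.e.\ $H\succ H^*$, contradicting the hypothesis. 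This is the missing computation; without it, (ii) remains unproved.
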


\begin{proof}
Let $\LL$ be a geometric lattice.  We may assume that $\LL$ is the lattice of flats of some simple matroid $M$ on the ground set $E = [n] = \{1, 2, \dots, n \}$.  Given flats $F' = \{i_1 > \cdots > i_r\}$ and $F = \{j_1 > \cdots > j_s\}$ of $M$, we will say that $F' \succ F$ if $\rk F' > \rk F$ or if $\rk F' = \rk F$ and for some $\ell \leq \min\{r, s\}$ we have $i_p = j_p$ for all $p < \ell$ and $i_\ell < j_\ell$.  We claim that $\prec$ is the desired total coatom order of $\LL$.  

First, we observe $\prec$ is compatible with restriction to flats of $M$ in the sense that  the induced order on the lattice of flats of $M\vert F$ is the analogously defined order on $M\vert F$ for any flat $F \in \LL$.  Hence, it suffices to prove that properties (i) and (ii) of Definition \ref{meet-shellable} are satisfied when $F = E$ so that $G$ and $G'$ are hyperplanes of $M$.  Since $\LL$ is a geometric lattice, we also note that saying a flat $F$ covers a flat $G$ is equivalent to saying that $G \subseteq F$ and $\rk G = \rk F - 1$.

\ref{locally:rooted} Let $H'$ and $H$ be hyperplanes of $M$ such that $H \prec H'$.  Let $X$ be a basis for $H \cap H'$ (in other words, a basis for the matroid $M\vert(H \cap H')$).  Then $\cl(X) = H \cap H'$ so that $\abs{X} = \rk(X) = \rk(H \cap H')$.  We can then find a set $Y \subseteq E$ with $X \cap Y = \emptyset$ such that $X \cup Y$ is a basis for $H'$. 

Suppose that  $H' = \{i_1 > \cdots > i_r\}$ and $H = \{j_1 > \cdots > j_s\}$.  By assumption, there is an $\ell \leq \min\{r, s\}$ such that $i_\ell < j_\ell < j_{\ell - 1} = i_{\ell -1}$ so that $j_\ell \notin H'$.  Hence, $X \cup Y \cup j_\ell$ is a basis for $M$.  We note that $\abs{Y} = \rk H' - \rk(H \cap H') \geq 1$.  Consider the hyperplane $H'' = \cl(X \cup (Y \setminus y) \cup j_\ell)$ for some $y \in Y$, and suppose that $H'' = \{k_1> \cdots > k_t\}$.  Since $X \cup (Y \setminus y) \subseteq H \cap H''$, we clearly have $\rk(H \cap H'') = \rk H - 1$ by construction, and $\{j_1, \dots, j_{\ell - 1}\} \subseteq H \cap H' = \cl(X) \subseteq H''$ so that $k_i \geq j_i$ for all $i \leq \ell$.  It follows that for some $h \leq \ell$ we have $k_h > i_h$ and $k_p = i_p$ for all $p < h$ so that $H'' \prec H'$.

\ref{good:restricted:covers} Let $\mathcal{H}$ be an initial segment of hyperplanes of $M$ with largest element $H' = \{i_1 > i_2 > \cdots > i_r\}$.  We must prove that the set $\coat_\mathcal{H}(H')$ of induced hyperplanes in $M\vert H'$ is also an initial segment.  Let $F'' \prec F$ be hyperplanes of $M\vert H'$ with $F \in \coat_\mathcal{H}(H')$.  We must show that $F'' \in \coat_\mathcal{H}(H')$.  As $F \in \coat_\mathcal{H}(H')$, we know there is a hyperplane $H \in \mathcal{H} \setminus \{H'\}$ with $F = H \cap H'$.    Since $H \prec H'$ by assumption, we know that there is an index $\ell$ and element $j > i_\ell$ of the ground set of $M$ such that 
\[ 
H = \{i_1 > \cdots > i_{\ell - 1} > j > j_{\ell + 1} > \cdots > j_s \}.
\]  
In particular, we note that $j \notin H'$ and $F = \{i_1 > \cdots > i_{\ell -1} > i_{h_\ell} > \cdots > i_{h_t} \}$ for some indices $h_k$.  Since $F'' \prec F$, we similarly know that $F''$ contains an element $e$ not in $F$.  As $F'' \setminus F \subseteq H' \setminus F$, it follows that $i_\ell \geq e$ so that $F'' = \{ i_1 > \cdots > i_{\ell -1} > i_{h_\ell} > \cdots > i_{h_{p-1}} > e > e_{p+1} > \cdots > e_q \}$.

Consider the hyperplane $H'' = \cl(F'' \cup j)$.  On the one hand, we have $F'' \subseteq H' \cap H''$ so that $\rk H' - 1 = \rk F'' \leq \rk(H' \cap H'')$.  On the other hand, we note that $H'' \neq H'$ since $j \notin H'$, and so, it follows that $\rk(H' \cap H'') \leq \rk M - 1$ since the lattice of flats of $M$ is semimodular.  Hence, $\rk F'' = \rk (H' \cap H'')$ so that $F'' = H' \cap H''$, and so, we are done if we can show that $H'' \prec H'$.  If this is not the case, then $H' \prec H''$ so that there is an index $m$ such that $H'' = \{i_1 > \cdots > i_{m-1} > k_m > \cdots > k_b \}$, where $i_m > k_m$. In particular, $i_m \notin H''$ so that $m \geq \ell$.  If $m > \ell$, then $j \in H''$ and $j > i_\ell \geq i_{m-1}$, which is impossible.  Thus, we must have $m = \ell$ and $k_m \geq j > i_\ell$, which implies that $H'' \prec H'$, a contradiction.  Hence, $H'' \prec H'$ as wanted.
\end{proof}

\begin{example}
Consider the uniform matroid $U_{5,5}$ on the ground set $E = \{1, 2, 3, 4, 5\}$.  For simplicity of notation, we will write subsets of $E$ as strings of their elements.  Since every subset of $E$ is independent, the hyperplanes of $U_{5, 5}$ are precisely the subsets of size 4.  Take $M = T_{H'}(U_{5, 5})$ to be the principal truncation of $U_{5, 5}$ with respect to the hyperplane $H' = 1234$.  (See \cite[p.279]{Oxl11} for a definition of principal truncation.) Equivalently, $M$ can be viewed as just the matroid of linearly independent columns of the matrix \vspace{0.5 em}
\[
\hspace{-2 em}
A = \begin{pmatrix} 
1 & 0 & 0 & 1 & 0 \\
0 & 1 & 0 & 1 & 0 \\
0 & 0 & 1 & 1 & 0 \\
0 & 0 & 0 & 0 & 1 
\end{pmatrix},
\vspace{0.5 em}
\]
where the columns are indexed $1$ to $5$.  The lattice of flats of $M$ is as shown in Figure~\ref{figure:trunc}. The flats of $M$ have been arranged in descending order according to the total coatom order of the preceding theorem when read from top to bottom and left to right.

\begin{figure}[th]
\lattice[baseline=(current bounding box.west), scale = 0.9]{
0/4/12345/0,
-3/3/1234/1,
-2/3/125/2,
-1/3/135/3,
0/3/235/4,
1/3/145/5,
2/3/245/6,
3/3/345/7,
-2.5/1/12/8,
-2/1/13/9,
-1.5/1/23/10,
-1/1/14/11,
-0.5/1/24/12,
0/1/34/13,
1/1/15/14,
1.5/1/25/15,
2/1/35/16,
2.5/1/45/17,
-2/0/1/18,
-1/0/2/19,
0/0/3/20,
1/0/4/21,
2/0/5/22,
0/-1/$\emptyset$/23}{0/1, 0/2, 0/3, 0/4, 0/5, 0/6, 0/7, 8/1, 8/2, 9/1, 9/3, 10/1, 10/4, 11/1, 11/5, 12/1, 12/6, 13/1, 13/7, 14/2, 14/3, 14/5, 15/2, 15/4, 15/6, 16/3, 16/4, 16/7, 17/5, 17/6, 17/7, 18/8, 18/9, 18/11, 18/14, 19/8, 19/10, 19/12, 19/15, 20/9, 20/10, 20/13, 20/16, 21/11, 21/12, 21/13, 21/17, 22/14, 22/15, 22/16, 22/17, 23/18, 23/19, 23/20, 23/21, 23/22}
\caption{The lattice $\LL(M)$}\label{figure:trunc}
\end{figure}
\end{example}

 A finite lattice $\LL$ with $\hat{0} \neq \hat{1}$ admits a \term{recursive coatom ordering} if there is a total order $\prec = \prec_{\hat{1}}$ on the set of coatoms of $\LL$ such that:
\begin{enumerate}[label = (\roman*)]

\item For all $H, H' \in \coat(\LL)$, if $H \prec H'$ and $G \in \LL$ such that $G < H, H'$, then there exists an $H'' \in \coat(\LL)$ such that:
\begin{enumerate}[label = (i.\alph*)]
\item  $H'' \prec H'$, 
\item There exists $F \in \coat(H')$ such that $G \leq F < H''$
\end{enumerate}

\item For each $H' \in \coat(\LL)$, the interval $[\hat{0},H']$ admits a recursive coatom ordering $\prec_{H'}$ such that if $F \in \coat(H) \cap \coat(H')$ for some $H \prec H'$ and $F' \in \coat(H') \setminus \bigcup_{H'' \prec H} \coat(H'')$, then $F \prec_{H'} F'$.

\end{enumerate}
By convention, we also consider the lattice with $\hat{0} = \hat{1}$ to admit a recursive coatom order.

It is known that geometric lattices admit recursive atom orderings \cite[Theorem 7.2]{WW86}, the dual notion to recursive coatom orderings.  We could not find a reference in the literature for the following result and include a proof for completeness.  The following argument was communicated to us by Vic Reiner.

\begin{prop}\label{rco}
If $\LL$ is a geometric lattice, then $\LL$ admits a recursive coatom ordering.
\end{prop}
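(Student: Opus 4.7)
The plan is to prove the proposition by induction on $\rk \LL$, using the total coatom ordering on $\LL$ furnished by Theorem~\ref{geometric:lattices:are:meet-shellable}. The base cases with $\rk \LL \le 1$ are handled trivially or by the stated convention for $\hat{0} = \hat{1}$.

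For the inductive step, I would fix a total coatom ordering $\prec$ on $\LL$ and take $\prec_{\hat{1}}$ to be its restriction to $\coat(\LL)$. For each $H' \in \coat(\LL)$, the interval $[\hat{0}, H']$ is itself a geometric lattice of strictly smaller rank, and a direct check of Definition~\ref{meet-shellable} shows that the restriction of $\prec$ to $[\hat{0}, H']$ is again a total coatom ordering (the key observation being that coatoms, meets, and initial segments in $[\hat{0}, H']$ agree with those in $\LL$). The inductive hypothesis then furnishes a recursive coatom ordering $\prec_{H'}$ on $[\hat{0}, H']$, obtained in this fashion as the restriction of $\prec$ to $\coat(H')$.

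Condition (i) in the definition of a recursive coatom ordering follows essentially verbatim from property~\ref{locally:rooted} applied at the top element $\hat{1}$. Given coatoms $H \prec H'$ of $\LL$ and $G \in \LL$ with $G \le H$ and $G \le H'$, property~\ref{locally:rooted} produces a coatom $H'' \prec H'$ with $H' \wedge H'' \in \coat(H')$ and $H \wedge H' \le H''$. Setting $F := H' \wedge H''$ gives a coatom of $H'$ with $G \le F$ (combining $G \le H \wedge H' \le H''$ with $G \le H'$) and $F < H''$ (strict because $H'' \ne H'$).

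The main obstacle is verifying the compatibility condition in (ii) linking $\prec_{\hat{1}}$ and $\prec_{H'}$. The central tool is property~\ref{good:restricted:covers}: applied to the initial segment $\mathcal{H} = \{K \in \coat(\LL) \mid K \preceq H'\}$, whose largest element is $H'$, it shows that the set $\coat_\mathcal{H}(H')$ of those coatoms of $H'$ of the form $K \wedge H'$ with $K \in \mathcal{H}$---precisely the coatoms of $H'$ shared with some coatom of $\LL$ preceding $H'$---is an initial segment of $\coat(H')$ in $\prec_{H'}$. The subcase where the coatom $F'$ from condition~(ii) is "truly new," i.e., lies outside $\coat_\mathcal{H}(H')$, follows immediately since $F = H \wedge H'$ lies inside $\coat_\mathcal{H}(H')$. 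The harder subcase is when $F'$ is instead shared with some coatom lying strictly between $H$ and $H'$ in $\prec$; here one tracks the "age" of each coatom of $H'$ (the first $K \in \coat(\LL)$ in $\prec$ for which it lies in $\coat(K)$) by iteratively applying property~\ref{good:restricted:covers} to finer initial segments of $\coat(\LL)$ in concert with the inductive hypothesis on intervals $[\hat{0}, K]$ for intermediate coatoms $K \prec H'$, showing that smaller age implies earlier position in $\prec_{H'}$.
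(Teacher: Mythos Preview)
Your approach is genuinely different from the paper's. The paper does not touch the total coatom ordering at all here: it quotes a chain of results from the shellability literature (geometric $\Rightarrow$ SL-shellable $\Rightarrow$ dual SL-shellable $\Rightarrow$ EL-shellable $\Rightarrow$ CL-shellable $\Rightarrow$ recursive atom ordering on the dual), which is short but entirely reliant on outside references. Your argument instead uses Theorem~\ref{geometric:lattices:are:meet-shellable} together with a direct verification that a total coatom ordering restricts to a recursive coatom ordering; this is exactly the ``clear'' direction of Proposition~\ref{Delucchi}, so you are in effect rederiving Proposition~\ref{rco} from Theorem~\ref{geometric:lattices:are:meet-shellable} and half of Proposition~\ref{Delucchi}. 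That is a perfectly legitimate and more self-contained route.

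Your verification of condition~(i) is fine. For condition~(ii), however, you have been led astray by what is almost certainly a typo in the paper's statement of the recursive coatom ordering: the union should range over $H'' \prec H'$, not $H'' \prec H$ (compare the standard definition in Bj\"orner--Wachs). With the intended definition, condition~(ii) simply asks that the coatoms of $H'$ shared with \emph{some} earlier coatom of $\LL$ all precede the coatoms of $H'$ shared with \emph{no} earlier coatom. In a graded lattice, $\coat(H)\cap\coat(H')$ is either empty or $\{H\wedge H'\}$, so this set of ``shared'' coatoms is exactly your $\coat_{\mathcal H}(H')$, and property~\ref{good:restricted:covers} says precisely that it is an initial segment of $\coat(H')$. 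Your ``easy subcase'' already finishes the proof; the ``harder subcase'' (and the age-tracking argument you sketch for it, which as written is too vague to evaluate) is an artifact of the typo and should be dropped.
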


\begin{proof}  If $\LL$ is geometric, then $\LL$ is SL-shellable by \cite[Theorem 3.7]{Bjo80}.  The dual $\LL^\ast$ of the  SL-shellable lattice  $\LL$ is also SL-shellable by \cite[Proposition 3.5]{Bjo80}.  By definition, $\LL^\ast$ is then EL-shellable \cite[Definition 3.4]{Bjo80} and hence CL-shellable by \cite[Proposition 2.3]{BW83}.  Since $\LL^\ast$ is CL-shellable, $\LL^\ast$ admits a recursive atom ordering by \cite[Theorem 5.11]{BW96a}, which is equivalent to $\LL$ admitting a recursive coatom ordering.
\end{proof}

\noindent We refer the reader to the cited references for precise definitions of the variants of shellability; we will not need them for the remainder of this paper.

It is clear from the definition that a total recursive coatom ordering induces a recursive coatom ordering.  That the reverse also holds is essentially the content of \cite[Lemma 2.10]{Del08} restricted to the case of a graded lattice.

\begin{prop}\label{Delucchi} Let $\LL$ be a graded lattice.  Then $\LL$  admits a recursive coatom orderings if and only if $\LL$ admits a total coatom ordering.
\end{prop}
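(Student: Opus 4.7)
The plan is to prove both implications by induction on the length of $\LL$. For the forward direction, given a TCO $\prec$ on $\LL$, I would take its restriction to $\coat(\LL)$ as the candidate RCO. Condition (i) of RCO for coatoms $H \prec H'$ and any $G \le H \wedge H'$ follows immediately from condition \ref{locally:rooted} of TCO applied at $F = \hat{1}$: the produced $H''$ has $H' \wedge H'' \in \coat(H')$, and this element serves as the coatom of $H'$ required by RCO (i), with $G \le H \wedge H' \le H' \wedge H'' < H''$, where strictness holds because $H'' \ne H'$ are both coatoms of $\LL$. For RCO (ii), I would apply TCO \ref{good:restricted:covers} with $\mathcal{G} = \{H \in \coat(\LL) \mid H \preceq H'\}$; the graded hypothesis yields $\rk(H \wedge H') = \rk H' - 1$ whenever $H \wedge H' \in \coat(H')$ with $H \prec H'$, which identifies $\coat_\mathcal{G}(H')$ as $\coat(H') \cap \bigcup_{H \prec H'} \coat(H)$, and its being an initial segment of $\coat(H')$ is precisely the RCO (ii) compatibility clause. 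Finally, for each coatom $H$ of $\LL$ the restriction of $\prec$ to $[\hat{0}, H]$ is visibly still a TCO (its defining conditions only ever mention coatoms of elements inside the interval), so induction furnishes an RCO on $[\hat{0}, H]$, completing the recursive structure.

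For the backward direction, given an RCO on $\LL$ with coatoms $H_1 \prec \dots \prec H_m$, the recursive definition supplies an RCO on each $[\hat{0}, H_i]$, whence by induction a TCO $\prec_i$ on $[\hat{0}, H_i]$. To assemble these into a total order on $\LL$, I would define the first-appearance index $i(F) = \min\{i \mid F \le H_i\}$ for $F \ne \hat{1}$ and declare $F \triangleleft F'$ iff $i(F) < i(F')$, or $i(F) = i(F') = i$ and $F \prec_i F'$; the element $\hat{1}$ is placed at the top. Verifying that $\triangleleft$ is a TCO amounts, for $F = \hat{1}$, to an unraveling of the two RCO axioms on $\LL$ itself (condition \ref{locally:rooted} comes from RCO (i) by taking $G = H \wedge H'$, and condition \ref{good:restricted:covers} comes from the RCO compatibility clause together with the graded rank identification above), while for a general $F \ne \hat{1}$ the coatoms of $F$ all lie in $[\hat{0}, H_{i(F)}]$ and the conditions reduce to the TCO axioms for $\prec_{i(F)}$ on that interval, which hold by induction.

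The main obstacle is the detailed bookkeeping in the backward direction: one must check that the inductively obtained orders $\prec_i$ on the various intervals $[\hat{0}, H_i]$, which are independent of one another a priori, glue together coherently under $\triangleleft$, so that both TCO conditions hold uniformly for every $F \in \LL$ and not only at $F = \hat{1}$. The crucial input is the RCO compatibility clause (ii), which forces the coatoms of $H'$ already visited in earlier $H_i$ to appear as an initial segment under $\prec_{H'}$; without this, an initial segment of $\coat(F)$ under $\triangleleft$ need not pull back to an initial segment in the inductive TCO. This step is essentially the content of \cite[Lemma 2.10]{Del08}, which works without the graded hypothesis; the simplification here is that grading reduces TCO \ref{good:restricted:covers} to a transparent statement about intersections of coatoms. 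I would therefore either adapt Delucchi's argument directly or invoke his lemma, remarking that only the graded case is needed for our applications.
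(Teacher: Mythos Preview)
Your proposal is correct and aligns with the paper's own treatment: the paper declares the implication TCO $\Rightarrow$ RCO to be ``clear from the definition'' and attributes the converse RCO $\Rightarrow$ TCO entirely to \cite[Lemma~2.10]{Del08}, giving no argument beyond the citation---exactly the structure you arrive at, with the same acknowledged reliance on Delucchi for the gluing step. (One cosmetic point: you have swapped the labels ``forward'' and ``backward'' relative to the stated biconditional, since the proposition reads RCO $\Leftrightarrow$ TCO.)
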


\noindent We refer the reader to \cite{Del08} for a proof.  Thus one could replace Theorem~\ref{geometric:lattices:are:meet-shellable} by appealing to Propositions~ \ref{rco} and \ref{Delucchi}; however, we find our argument more constructive and direct.
By \cite[Theorem 3.3]{BW82}, CL-shellable lattices (or more generally, graded posets) are shellable.  Thus, graded lattices with a total coatom order are shellable.

\section{The Koszul property of the Chow ring of a matroid}\label{Smain}

The aim of this section is to show that the Chow ring of any matroid is Koszul by building a suitable Koszul filtration. First, we describe a presentation of $\Chow(M)$ and a corresponding Gr\"obner basis better suited to our purposes.

\subsection{The Atom-free presentation of the Chow Ring of a Matroid}\label{SSchow}

Let $M$ be a simple matroid with finite ground set $E$ and lattice of flats $\LL = \LL(M)$.  Abusing notation slightly, we identify elements $i \in E$ with the rank-one flats $\{i\}$ of $M$.  There are now several presentations of the Chow ring of a matroid.  The \term{Feichtner-Yuzvinsky presentation} of the Chow ring of $M$ \cite{FY04} is the ring
\[ \Chow_{FY}(M) = \QQ\left[x_F \mid F \in \LL \smallsetminus \{\emptyset\} \right]/I_{FY}(M), \]
where
\[ I_{FY}(M) = \left(x_Fx_{F'} \mid F, F' \; \text{incomparable}\;\right) + \left(\textstyle \sum_{i \in F}\, x_F \mid i \in E\right). \] 

In several papers \cite{AHK18, BHMPW20a, BHMPW20b}, the variable $x_E$ is eliminated so that  variables are associated to nonempty, proper flats.  Instead, we eliminate the variables associated to the atoms of $\LL$ to define a quadratic presentation.

\begin{defn}
Let $\LL_{\geq 2}$ denote the set of flats of $M$ of rank at least 2.  We define the \term{atom-free presentation} of the Chow ring of $M$ to be the ring
\[ \Chow_\af(M) = \QQ[x_F \mid F \in \LL_{\geq 2} ]/I_\af(M), \] 
where
\begin{align*} 
I_\af(M) 
&= \left(x_Fx_{F'} \mid F, F' \in \LL_{\geq 2} \; \text{incomparable}\;\right)  + \left( \textstyle x_F \sum_{F' \supseteq F \vee i} \, x_{F'} \,\bigg{\rvert}\, F \in \LL_{\geq 2}, i \in E \setminus F \right) \\
&+ \left(\textstyle \sum_{F \supseteq i \vee j} \, x_F^2 + \sum_{F' \supsetneq F \supseteq i \vee j}  \, 2x_Fx_{F'} \,\bigg{\rvert}\, i, j \in E, i \neq j\right).
\end{align*}
\end{defn}

After a change variables $\phi$ on the ring $\QQ[x_F \mid F \in \LL \setminus \{\emptyset\}]$ sending $x_i \mapsto x_i - \sum_{F \supsetneq i} x_F$ for each $i \in E$, we see that 
\begin{align} 
\begin{split} \label{change:of:variables}
\phi(I_{FY}(M)) &= \left(x_i \mid i \in E) + (x_Fx_{F'} \mid F, F' \in \LL_{\geq 2} \; \text{incomparable}\;\right)  \\ 
&+ \left( \textstyle x_F\sum_{F' \supsetneq i} \, x_{F'} \,\bigg{\rvert}\, F \in \LL_{\geq 2}, i \in E \setminus F \right) + \left(\textstyle \sum_{F \supsetneq i, F' \supsetneq j} \, x_Fx_{F'} \,\bigg{\rvert}\, i, j \in E, i \neq j \right) 
\\
&= \left(x_i \mid i \in E \right) + I_\af(M),
\end{split}
\end{align}
so that $\Chow_{FY}(M) \iso \QQ[x_F \mid \LL \setminus \{\emptyset\}]/((x_i \mid i \in E) + I_\af(M)) \iso \Chow_\af(M)$.
There is also a simplicial presentation of the Chow ring of a matroid \cite{BES20}, but we will not need it here.

\begin{rmk}
Although we use $\QQ$ as our coefficient field in this paper to keep with the terminology established in \cite{BHMPW20b}, our constructions work equally well over any coefficient field.
\end{rmk}

While a quadratic Gr\"obner basis for $I_{FY}(M)$ or $I_{\af}(M)$ in not known, a non-quadratic Gr\"obner basis for $I_{FY}(M)$ was computed by Feichtner and Yuzvinsky, which we recall here.

\begin{thm}[{\cite[Theorem 2]{FY04}}]\label{FYgb}
With respect to any lexicographic order $>$ such that $x_F > x_G$ implies $F \nsupseteq G$, the ideal $I_{FY}(M)$ has a Gr\"obner basis consisting of the following polynomials for all $F, F' \in \LL \setminus \{\emptyset\}$:
\begin{align} 
&x_{F'}x_F& F, F' \text{incomparable} \\
&x_{F'}\left(\textstyle \sum_{G \supseteq F}\, x_G\right)^{\rk F - \rk F'}& F' \subsetneq F \\
&\left(\textstyle \sum_{G \supseteq F}\, x_G\right)^{\rk F} 
\end{align}
\end{thm}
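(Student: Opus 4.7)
The plan is to verify the Gr\"obner basis property by (i) identifying the leading monomials under the specified lex order, (ii) showing the listed polynomials belong to $I_{FY}(M)$, and (iii) using a Hilbert-series comparison to confirm that their leading monomials generate $\init(I_{FY}(M))$. Under the rule $x_F > x_G \Rightarrow F \nsupseteq G$, the variable $x_F$ is maximum among $\{x_G : G \supseteq F\}$, so the leading monomial of family (2) is $x_{F'}x_F^{\rk F - \rk F'}$, that of family (3) is $x_F^{\rk F}$, and family (1) is already monomial.

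For membership, family (1) consists of generators. For family (2), I would induct on $k = \rk F - \rk F'$. Pick an atom $i \in F \setminus F'$ and set $F'' = F' \vee i$ (of rank $\rk F' + 1$ by semimodularity, with $F' \subsetneq F'' \subseteq F$). The linear relation $\sum_{H \ni i} x_H \equiv 0 \pmod{I_{FY}}$ rewrites $\sum_{G \supseteq F} x_G$ modulo $I_{FY}$ as $-\sum_{H \ni i,\, H \nsupseteq F} x_H$. After multiplying by $x_{F'}$, terms with $H$ incomparable to $F'$ vanish via family (1), the case $H \subseteq F'$ is impossible since $i \notin F'$, and the surviving terms have $H \supseteq F''$. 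Using the decomposition $\sum_{H \supseteq F'',\, H \nsupseteq F} x_H = \sum_{H \supseteq F''} x_H - \sum_{H \supseteq F} x_H$ and invoking the inductive hypothesis at the pair $(F'', F)$ with rank gap $k-1$, one reduces the problem. Family (3) follows from family (2) by one further application of an atom relation for some $i \in F$.

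For step (iii), let $J$ denote the monomial ideal generated by the proposed leading terms. The monomials outside $J$ are exactly those $x_{F_1}^{a_1}\cdots x_{F_s}^{a_s}$ supported on a chain of nonempty flats $F_1 \subsetneq \cdots \subsetneq F_s$ with $1 \leq a_1 \leq \rk F_1 - 1$ and $1 \leq a_j \leq \rk F_j - \rk F_{j-1} - 1$ for $j \geq 2$. Their generating function in $t$ matches the known Hilbert series of $\Chow_{FY}(M)$, which is independently computable (e.g., via Poincar\'e duality combined with the matroidal flag count, or in the realizable case via the cohomology of the wonderful compactification). Since $J \subseteq \init(I_{FY}(M))$ and the quotients $\QQ[x_F]/J$ and $\Chow_{FY}(M)$ have equal Hilbert series, $J = \init(I_{FY}(M))$ and the list forms a Gr\"obner basis.

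The main obstacle is the inductive membership argument for family (2). A direct telescoping yields the derived identity $x_{F'}\left(\sum_{H \supseteq F''} x_H\right)\left(\sum_{G \supseteq F} x_G\right)^{k-1} \in I_{FY}$, which does not immediately produce the desired polynomial; one must combine this with the inductive hypothesis and carefully track which subproducts reduce via family (1) at each stage. Getting the induction to close without circularity or accumulating leftover terms is the subtlest point, and may necessitate a strengthened hypothesis covering simultaneously the mixed form $x_{F'}\bigl(\sum_{H \supseteq F''} x_H\bigr)\bigl(\sum_{G \supseteq F} x_G\bigr)^{k-1}$ alongside the base statement.
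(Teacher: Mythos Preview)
The paper does not prove this theorem; it is quoted from \cite{FY04} and used as a black box, so there is no internal argument to compare against. Your proposal must therefore stand on its own.

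Your identification of the leading monomials is correct, and the three–step strategy (leading terms, membership, Hilbert-series comparison) is sound in outline. But both nontrivial steps have real gaps.

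\textbf{Membership of family (2).} You correctly diagnose the obstacle. Writing $\sigma_G := \sum_{H\supseteq G} x_H$ and $F'' = F'\vee i$ with $i\in F\setminus F'$, the base case gives $x_{F'}\sigma_{F''}\in I_{FY}$, hence $x_{F'}\sigma_{F''}\sigma_F^{k-1}\in I_{FY}$; subtracting leaves the residual $\sum_G x_{F'}x_G\sigma_F^{k-1}$ over flats $G$ with $F''\subseteq G$ and $F\nsubseteq G$. When $G\subsetneq F$ the inductive hypothesis at the pair $(G,F)$ (gap $\le k-1$) kills $x_G\sigma_F^{k-1}$. The hard case is $G$ incomparable to $F$: then $x_G\sigma_F\equiv x_G\sigma_{G\vee F}$ modulo the incomparability relations, and semimodularity gives $\rk(G\vee F)-\rk G\le k-1$, but converting $x_G\sigma_F^{k-1}$ into $x_G\sigma_{G\vee F}^{k-1}$ requires iterating this substitution and spawns further residual terms of the same type. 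The induction does close, but only if you strengthen the hypothesis to: for every pair $G\nsupseteq F$ and every $m\ge \rk(G\vee F)-\rk G$, one has $x_G\sigma_F^{\,m}\in I_{FY}$, and run the induction over all such pairs simultaneously. As stated, your inductive scheme is circular in exactly the way you suspect.

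\textbf{Hilbert-series comparison.} This step is circular as written. In this paper (and in \cite{FY04}) the Hilbert series of $\Chow_{FY}(M)$ is obtained \emph{from} the Gr\"obner basis: Corollary~\ref{monomial:basis} is a consequence of Theorem~\ref{FYgb}, not an independent input. Poincar\'e duality only gives symmetry of the Hilbert function, not its values; the ``matroidal flag count'' presupposes the nested-monomial basis you are trying to establish; and the realizable case via wonderful compactifications does not cover arbitrary matroids. Feichtner and Yuzvinsky break the circularity by proving directly that the nested monomials are linearly independent in $\Chow_{FY}(M)$; combined with spanning (which follows from membership), this yields equality of initial ideals. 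To make your argument complete you need either that linear-independence argument or a genuine Buchberger S-pair verification in place of the Hilbert-series appeal.
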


A slight modification yields a non-quadratic Gr\"obner basis in the atom-free setting.

\begin{cor} \label{atom-free:Gröbner:basis}
With respect to any lexicographic order $>$ such that $x_F > x_G$ implies $F \nsupseteq G$, the ideal $I_{\mathrm{af}}(M)$ has a Gr\"obner basis consisting of the following polynomials for all $F, F' \in \LL_{\geq 2}$:
\begin{align} 
&x_Fx_{F'}& F, F' \text{incomparable} \\
&x_{F'}\left(\textstyle \sum_{G \supseteq F}\,  x_G\right)^{\rk F - \rk F'}& F' \subsetneq F \\
&\left(\textstyle \sum_{G \supseteq F}\,  x_G\right)^{\rk F} & 
\end{align}
\end{cor}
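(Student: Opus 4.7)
The plan is to derive the corollary from Theorem~\ref{FYgb} by transferring the Feichtner-Yuzvinsky Gr\"obner basis for $I_{FY}(M)$ across the change of variables $\phi$ of equation~\eqref{change:of:variables}. Set $S = \QQ[x_F \mid F \in \LL \setminus \{\emptyset\}]$ and $R = \QQ[x_F \mid F \in \LL_{\geq 2}]$, viewed as the quotient $S/(x_i \mid i \in E)$, and let $\pi\colon S \to R$ denote the projection. By \eqref{change:of:variables} we have $\pi(\phi(I_{FY}(M))) = I_{af}(M)$.

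The first observation is that in any lex order satisfying the stated hypothesis, the atom variables $x_i$ are the \emph{largest} variables: for any nonempty flat $G \neq \{i\}$, $\{i\} \nsupseteq G$ forces $x_i > x_G$.  Since $\phi(x_i) = x_i - \sum_{F \supsetneq i} x_F$ differs from $x_i$ by lex-smaller terms, $\phi$ preserves the leading monomial of every polynomial in $S$, so $\init(\phi(I_{FY}(M))) = \init(I_{FY}(M))$. By Theorem~\ref{FYgb} this initial ideal is generated by the leading monomials of the FY generators: $x_Fx_{F'}$ for incomparable $F,F'$; $x_{F'}x_F^{\rk F-\rk F'}$ for $F' \subsetneq F$; and $x_F^{\rk F}$. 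Every atom-involving monomial is divisible by some $x_i$, and each $x_i$ itself arises as the leading monomial of the Type~(3) FY generator for $F=\{i\}$ (whose underlying polynomial is the linear form $\sum_{G \supseteq i} x_G$). Thus $\init(I_{FY}(M)) = (x_i \mid i \in E)\cdot S + L \cdot S$, where $L$ is the ideal generated inside $R$ by the leading monomials of the three families listed in the corollary.

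To identify the initial ideal of $I_{af}(M)$ inside $R$, I would use that atom monomials are lex-largest to establish the following splitting: for any ideal $K$ of $R$,
\[
\init_S\bigl((x_i \mid i \in E)\cdot S + K\cdot S\bigr) = (x_i \mid i \in E)\cdot S + \init_S(K\cdot S).
\]
Applying this to $K = I_{af}(M)$ together with $\phi(I_{FY}(M)) = (x_i\mid i \in E) + I_{af}(M)\cdot S$ and the previous paragraph yields $(x_i)\cdot S + L\cdot S = (x_i)\cdot S + \init_S(I_{af}(M)\cdot S)$. Intersecting with the atom-free subring $R$ (noting $(x_i)\cdot S \cap R = 0$) gives $\init_R(I_{af}(M)) = L$, so the listed polynomials generate the correct initial ideal.

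It remains to verify each listed polynomial lies in $I_{af}(M)$ itself. Each such polynomial is an atom-free FY Gr\"obner basis element (hence in $I_{FY}(M)$) and is fixed by $\phi$; therefore it lies in $\phi(I_{FY}(M)) = (x_i\mid i\in E) + I_{af}(M)\cdot S$.  Writing it as $g+h$ with $g \in (x_i)$ and $h \in I_{af}(M)\cdot S$ and applying $\pi$ (which acts as the identity on atom-free polynomials) recovers the polynomial as $\pi(h) \in I_{af}(M)$. The main obstacle I anticipate is the bookkeeping between initial ideals in $S$ and in $R$; the argument hinges on the observation that atom variables are lex-largest, which makes the splitting modulo $(x_i)$ behave cleanly and allows the atom-involving portion of the FY basis to be discarded without loss.
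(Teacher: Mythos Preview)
Your strategy coincides with the paper's: transport the Feichtner--Yuzvinsky Gr\"obner basis through $\phi$, identify the initial ideal, and then pass to the atom-free subring $R$. The paper packages the last step as ``$<$ is an elimination order for the $x_i$, so the atom-free Gr\"obner basis elements compute the elimination ideal $\phi(I_{FY}(M))\cap R = I_{\mathrm{af}}(M)$,'' while you package it as a splitting lemma followed by intersection with $R$; these are essentially the same argument.

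There is, however, a genuine error in your first observation: the hypothesis $x_F > x_G \Rightarrow F \nsupseteq G$ is a one-way implication, and you have used its converse. From $\{i\}\nsupseteq G$ you cannot conclude $x_i > x_G$. Concretely, for $U_{3,3}$ the variable ordering $x_1 > x_2 > x_{12} > x_3 > x_{13} > x_{23} > x_{123}$ satisfies the hypothesis (whenever $F\supsetneq G$ one checks $x_F < x_G$), yet the atom variable $x_3$ sits below the non-atom variable $x_{12}$. So the order need not be an elimination order for the atoms, and you say your argument ``hinges'' on exactly this. (The paper makes the very same assertion.) Fortunately the splitting lemma you want is true for a different reason, namely that $I_{\mathrm{af}}(M)$ is generated inside $R$: any $f\in (x_i\mid i\in E) + I_{\mathrm{af}}(M)\cdot S$ decomposes uniquely as $f=f_{\mathrm{at}}+f_{\mathrm{af}}$ into its atom-involving and atom-free parts, and splitting the $S$-coefficients of the generators shows $f_{\mathrm{at}}\in (x_i)$ and $f_{\mathrm{af}}\in I_{\mathrm{af}}(M)$. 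If $\init_S(f)$ is atom-free it must be a term of $f_{\mathrm{af}}$ (the two parts have disjoint supports), hence equal to $\init_S(f_{\mathrm{af}})\in\init_S(I_{\mathrm{af}}(M)\cdot S)$; otherwise $\init_S(f)\in(x_i)$. This gives your splitting without any claim about which variables are largest, and the rest of your argument then goes through.
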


\begin{proof}
As in the proof of \cite[3.2.2]{BES20}, we note that the automorphism $\phi$ of $\QQ[x_F\mid F \in \LL \setminus \{\emptyset\}]$ of \eqref{change:of:variables} satisfies $\init_> \phi(x_F) = x_F$ for all flats $F$; hence, $\init_> \phi(f) = \init_> f$ for all polynomials $f$ so that $I_{FY}(M)$ and $I_{\mathrm{af}}(M)$ have the same initial ideal and the images under $\phi$ of the Gr\"obner basis for $I_{FY}(M)$ form a Gr\"obner basis for $\phi(I_{FY}(M)) = (x_i \mid i \in E) + I_{\mathrm{af}}(M)$.  This Gr\"obner basis includes the polynomials listed above and the polynomials:
\begin{align} \label{superfluous:Gröbner:basis:1}
&x_F\left(\textstyle x_i - \sum_{F' \supsetneq i} \, x_{F'}\right)& F \in \LL_{\geq 2}, i \in E, F \nsupseteq i \\
\label{superfluous:Gröbner:basis:2}
&\left(\textstyle x_i - \sum_{F' \supsetneq a} \, x_{F'}\right)\left(\textstyle \sum_{G \supseteq F} x_G\right)^{\rk F - 1}& i \in E, i \subsetneq F \\
& x_i & i \in E
\end{align}
From this, we see that 
\begin{align} \label{Chow:ring:initial:ideal} 
\begin{split}
\init_>(\phi(I_{FY}(M)) &= \left(x_i \mid i \in E \right) + \left(x_Fx_{F'} \mid F, F' \in \LL_{\geq 2} \; \text{incomparable}\right) \\
&+ \left(x_{F'}x_F^{\rk F - \rk F'} \mid F, F' \in \LL_{\geq 2}, F' \subsetneq F\right) + \left(x_F^{\rk F} \mid F \in \LL_{\geq 2}\right),
\end{split}
\end{align}
which shows that omitting the above polynomials in \eqref{superfluous:Gröbner:basis:1}--\eqref{superfluous:Gröbner:basis:2} still leaves a Gr\"obner basis for $\phi(I_{FY}(M))$.

Since $<$ is an elimination order for the variables $x_i$ with $i \in E$, it follows that the polynomials listed in the statement of the corollary are a Gr\"obner basis for the elimination ideal $\phi(I_{FY}(M)) \cap \QQ[x_F \mid F \in \LL_{\geq 2}]$ by \cite[3.3]{EH12}.  It then suffices to note that $\phi(I_{FY}(M)) \cap \QQ[x_F \mid F \in \LL_{\geq 2}] = \pi((x_i \mid i \in E) + I_\af(M)) = I_\af(M)$, where $\pi: \QQ[x_F \mid F \in \LL \setminus \{\emptyset\}] \to \QQ[x_F \mid F \in \LL_{\geq 2}]$ is the natural algebra retract sending $x_i \mapsto 0$ for all $i \in E$.
\end{proof}

Combining \eqref{Chow:ring:initial:ideal} with Macaulay's Theorem yields the following.

\begin{cor} \label{monomial:basis}
The ring $\Chow_{\mathrm{af}}(M)$ has a $\QQ$-basis consisting of all monomials
\[ x_F^\alpha = x_{F_1}^{\alpha_1} \cdots x_{F_r}^{\alpha_r} \]
for all chains of flats $F = \{F_1 \supsetneq F_2 \supsetneq \cdots \supsetneq F_r \supsetneq F_{r+1} = \emptyset \}$ for some $r \geq 0$ with $F_i \in \LL_{\geq 2}$ for all $i \leq r$ and all $\alpha \in \ZZ_{\geq 0}^{r+1}$ such that $\sum_i \alpha_i = \rk F_1$ and $1 \leq \alpha_i < \rk F_i - \rk F_{i + 1}$ if $i \leq r$.
\end{cor}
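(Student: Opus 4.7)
The plan is to apply Macaulay's theorem directly, using Corollary~\ref{atom-free:Gröbner:basis} as the starting point. By that corollary, the leading monomials of the displayed Gr\"obner basis for $I_\af(M)$ are precisely $x_F x_{F'}$ for incomparable $F, F' \in \LL_{\geq 2}$; $x_{F'} x_F^{\rk F - \rk F'}$ for $F' \subsetneq F$ in $\LL_{\geq 2}$; and $x_F^{\rk F}$ for $F \in \LL_{\geq 2}$ (these are read off from the listed Gr\"obner basis elements using that $x_F$ is the largest among the $x_G$ with $G \supseteq F$). Hence $\init_>(I_\af(M))$ is the monomial ideal generated by these, and by Macaulay's theorem the monomials outside this initial ideal form a $\QQ$-basis of $\Chow_{\af}(M)$. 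The remaining task is purely combinatorial: characterize the standard monomials.

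A monomial $\prod_F x_F^{\beta_F}$ avoids the generators $x_F x_{F'}$ (for incomparable $F, F'$) exactly when its support $\{F : \beta_F \geq 1\}$ is a chain in $\LL_{\geq 2}$, say $F_1 \supsetneq F_2 \supsetneq \cdots \supsetneq F_r$. Setting $\alpha_i := \beta_{F_i}$, the constraint of avoiding each $x_{F_j} x_{F_i}^{\rk F_i - \rk F_j}$ (for $i < j \leq r$) becomes $\alpha_i < \rk F_i - \rk F_j$, and avoiding $x_{F_i}^{\rk F_i}$ gives $\alpha_i < \rk F_i$. With the convention $F_{r+1} := \emptyset$ and $\rk F_{r+1} = 0$, all of these conditions collapse into $1 \leq \alpha_i < \rk F_i - \rk F_{i+1}$ for $1 \leq i \leq r$, since $\rk F_i - \rk F_{i+1}$ is the tightest such bound (and the case $i = r$ recovers $\alpha_r < \rk F_r$).

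Finally, I would reparameterize by a slack coordinate $\alpha_{r+1} := \rk F_1 - \sum_{i=1}^r \alpha_i$. The telescoping estimate
\[
\sum_{i=1}^{r} \alpha_i \;\leq\; \sum_{i=1}^{r} (\rk F_i - \rk F_{i+1} - 1) \;=\; \rk F_1 - r
\]
guarantees $\alpha_{r+1} \geq r \geq 0$, so the augmented tuple $\alpha \in \ZZ_{\geq 0}^{r+1}$ satisfies the asserted conditions $\sum_i \alpha_i = \rk F_1$ and $1 \leq \alpha_i < \rk F_i - \rk F_{i+1}$ for $i \leq r$. Conversely, dropping $\alpha_{r+1}$ from any such tuple recovers the exponent vector of a standard monomial, so this reparameterization is a bijection between standard monomials and the pairs $(\{F_i\}, \alpha)$ in the statement. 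There is no substantive obstacle here: once Corollary~\ref{atom-free:Gröbner:basis} is in hand, the entire argument is bookkeeping around the shape of the initial ideal.
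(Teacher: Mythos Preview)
Your argument is correct and follows exactly the approach the paper takes: the paper's proof is the single sentence ``Combining \eqref{Chow:ring:initial:ideal} with Macaulay's Theorem yields the following,'' and you have simply unpacked that sentence by reading off the leading terms, applying Macaulay's theorem, and carrying out the bookkeeping that identifies the standard monomials with the nested monomials (including the introduction of the slack coordinate $\alpha_{r+1}$). There is no substantive difference in strategy.
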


\noindent Following \cite{FY04} and \cite{BES20}, we call the monomials of Corollary \ref{monomial:basis} the \term{nested monomials} of $\Chow_\af(M)$.  In this notation, we note that $x_{\{\emptyset\}}^{\mathbf{0}} = 1$.

Adiprasito, Huh, and Katz \cite{AHK18} proved that $\Chow(M)$ has the \term{K\"ahler package}; that is, $\Chow(M)$ is an Artinian, Gorenstein $\mathbb{Q}$-algebra of regularity $\reg(\Chow(M)) = \rk(M) - 1$ with versions of the hard Lefschetz and Hodge-Riemann conditions.  Other proofs of these properties have now been given in \cite{BES20} and \cite{BHMPW20a}.   It is well known that an Artinian, graded $\kk$-algebra $A$ is a Poincar\'e duality algebra if and only if $A$ is Gorenstein; see e.g. \cite[Proposition 2.1]{MW09}.   

\begin{rmk} \label{socle:generator}
As a consequence of Corollary \ref{atom-free:Gröbner:basis}, we note that $I_{\mathrm{af}}(M)$ contains the polynomials
\begin{equation} \label{hyperplane:relations}
x_Hx_E, \qquad x_Fx_H^{\rk H - \rk F} + x_Fx_E^{\rk M - \rk F - 1}, \qquad x_H^{\rk H} + x_E^{\rk M -1}
\end{equation}
for any $F \in \LL_{\geq 2}$ and hyperplane $H \supsetneq F$.  Moreover, we have $x_Fx_E^{\rk M - 1} = 0$ in $\Chow_\af(M)$ for all $F \in \LL_{\geq 2}$.  Since $\Chow_\af(M)$ is an Artinian Gorenstein ring, it follows that $x_E^{\rk M - 1}$ is a socle generator of $\Chow_\af(M)$.  These observations will be useful in the computations below.
\end{rmk}

\subsection{A Koszul filtration for the Chow ring of a matroid}\label{SSfilt1}

 We are now in position to describe the Koszul filtration.
 When $M$ is a matroid of rank 1 or 2, we note that $\Chow(M) \iso \QQ$ or $\Chow(M) \iso \QQ[x_E]/(x_E^2)$, respectively.  In either case, the Chow ring is clearly Koszul, and moreover, it is has a Koszul filtration.  In the former case, the filtration consists of only the zero ideal, and in the latter case, the filtration consists of the zero ideal and the graded, maximal ideal.   Hence, in what follows, we may assume the following without any loss of generality.

\begin{notation}
Throughout the remainder of this section, $M$ denotes a simple matroid with $\rk M \geq 3$ and $A = \Chow(M)$ with respect to the atom-free presentation.
\end{notation}

Although we will not exactly present the proof this way, the idea behind the construction of the desired Koszul filtration is that we can use natural matroid operations such as truncation and restriction to reduce the rank of our matroid down to the low-rank cases mentioned above and inductively lift a Koszul filtration to the Chow ring of our arbitrary matroid $M$. Recall that a flat $H$ of $M$ is called a hyperplane if $\rk H = \rk M - 1$, and the hyperplanes are precisely the coatoms in the lattice of flats $\LL = \LL(M)$.  Since the hyperplanes are exactly the flats omitted when truncating the matroid $M$, we are naturally led to study ideals generated by hyperplane variables.

\begin{lemma} \label{hyperplane:ideal:basis}
Let $\mathcal{H}$ be a set of hyperplanes of $M$.  Then the ideal $(x_H \mid H \in \mathcal{H})$ has a $\QQ$-basis consisting of all nested monomials $x_F^\alpha$ with $F = \{F_1 \supsetneq F_2 \supsetneq \cdots \supsetneq F_r \supsetneq F_{r+1} = \emptyset \}$ for some $r \geq 0$ such that either: 
\begin{enumerate}[label = \textnormal{(\roman*)}]
\item $F_1 \in \mathcal{H}$, or 
\item $F_1 = E$, $H \supsetneq F_2$ for some $H \in \mathcal{H}$, and $\alpha_1 = \rk M - \rk F_2 - 1$. 
\end{enumerate}
\end{lemma}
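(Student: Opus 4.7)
The approach is to identify the ideal $I = (x_H \mid H \in \mathcal{H})$ with the $\QQ$-span $V$ of the listed nested monomials, thereby reducing the lemma to proving the two containments $V \subseteq I$ and $I \subseteq V$. Linear independence of the listed monomials is automatic from Corollary~\ref{monomial:basis}, since they form a subset of a $\QQ$-basis of $A$, so the containments suffice.

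For $V \subseteq I$, monomials of type (i) are divisible by $x_{F_1} = x_H$ for some $H \in \mathcal{H}$ and hence lie in $I$. For a type (ii) monomial $x_E^{\rk M - \rk F_2 - 1} x_{F_2}^{\alpha_2} \cdots x_{F_r}^{\alpha_r}$, fix some $H \in \mathcal{H}$ with $H \supsetneq F_2$; since $\rk H = \rk M - 1$, the exponents $\rk H - \rk F_2$ and $\rk M - \rk F_2 - 1$ coincide, so the relation $x_{F_2}(x_H^{\rk M - \rk F_2 - 1} + x_E^{\rk M - \rk F_2 - 1}) = 0$ from Remark~\ref{socle:generator} rewrites the monomial as a scalar multiple of an element of $I$.

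For $I \subseteq V$, since $I$ is $\QQ$-spanned by products $x_H \cdot m$ with $H \in \mathcal{H}$ and $m = x_{G_1}^{\beta_1} \cdots x_{G_s}^{\beta_s}$ a nested monomial, we show each such product lies in $V$ via a case analysis on the comparison between $H$ and $G_1$. If $H$ and $G_1$ are incomparable, or if $G_1 = E$ (forcing $G_1 \supsetneq H$), the product vanishes in $A$ by the incomparability relation of Corollary~\ref{atom-free:Gröbner:basis} or by $x_H x_E = 0$. If $H \supsetneq G_1$ with $\rk H - \rk G_1 \geq 2$, then $x_H \cdot m$ is already a nested monomial of type (i); if $\rk H - \rk G_1 = 1$, the relation $x_{G_1}(x_H + x_E) = 0$ converts $x_H \cdot m$ to a type (ii) monomial with new top flats $E \supsetneq G_1 \supsetneq G_2 \supsetneq \cdots$ and the required exponent $\rk M - \rk G_1 - 1 = 1$. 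Finally, if $H = G_1$, then $x_H \cdot m = x_H^{\beta_1+1} x_{G_2}^{\beta_2} \cdots$ is nested of type (i) whenever $\beta_1 + 1 < \rk H - \rk G_2$, while in the boundary case $\beta_1 + 1 = \rk H - \rk G_2$ the appropriate relation from Remark~\ref{socle:generator} (applied to $F = G_2$, or the socle relation $x_H^{\rk H} = -x_E^{\rk M - 1}$ when $s = 1$) rewrites the product as a type (ii) monomial.

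The main obstacle will be the careful bookkeeping of the exponent conditions $1 \leq \alpha_i < \rk F_i - \rk F_{i+1}$ after each rewriting step, and confirming that the parameterization of type (ii) still makes sense in the degenerate cases $s = 1$ and $r = 1$, where $F_2 = \emptyset$ and the clause ``$H \supsetneq F_2$'' becomes vacuous for any hyperplane $H \in \mathcal{H}$.
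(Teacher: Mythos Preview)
Your proposal is correct and follows essentially the same approach as the paper: both arguments establish the two containments $V \subseteq I$ and $I \subseteq V$ via the same case analysis on how the top flat of a nested monomial compares to $H$, using the relations of Remark~\ref{socle:generator} to rewrite boundary cases. Your write-up is in fact slightly more explicit than the paper's in flagging the degenerate cases $s = 1$ and $F_2 = \emptyset$, but the underlying argument is the same.
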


\begin{proof}
Consider the case of a single hyperplane $H$.  The general case follows easily from the principal case.  If $F_1 = H$, then clearly $x_F^\alpha \in (x_H)$.  If we have $F_1 = E$, $H \supsetneq F_2$, and $\alpha_1 = \rk M - \rk F_2 - 1$, then by \eqref{hyperplane:relations} we have $x_E^{\alpha_1}x_{F_2} = - x_H^{\alpha_1}x_{F_2}$ provided $F_2 \neq \emptyset$ or $x_E^{\alpha_1} = -x_H^{\alpha_1}$ if $F_2 = \emptyset$; either way, it follows that $x_F^\alpha \in (x_H)$.  Hence, $(x_H)$ contains all monomials in the statement of the lemma.  Conversely, if $f \in (x_H)$, then $f = x_Hg$ for some $g \in \Chow(M)$.  Write $g = \sum c_{F, \alpha} x_F^\alpha$ as a $\QQ$-linear combination of nested monomials.  For any monomial $x_F^\alpha$ in the support of $g$, we see that $x_Hx_F^\alpha$ is nonzero only if $F_1 \subseteq H$.  Indeed, if $F_1 \nsubseteq H$, then either $F_1$ and $H$ are incomparable or $F_1 = E$.  Either way, the defining relations of $\Chow(M)$ or \eqref{hyperplane:relations} imply that $x_{F_1}x_H = 0$.  If $F_1 = H$, then $x_Hx_F^\alpha$ is a scalar multiple of a nested monomial, even if $\alpha_1 = \rk H - \rk F_2 - 1 = \rk M - \rk F_2 - 2$ since then $x_Hx_F^\alpha = -x_E^{\rk M - \rk F_2 -1}x_{F_2}^{\alpha_2} \cdots x_{F_r}^{\alpha_r}$.  If $H \supsetneq F_1$ and $\rk H - \rk F_1 \geq 2$, then $x_Hx_F^\alpha$ is also a nested monomial.  If $H \supsetneq F_1$ and $\rk H - \rk F_1  = 1$, then $x_Hx_F^\alpha = -x_Ex_{F_1}^{\alpha_1} \cdots x_{F_r}^{\alpha_r}$ is also a nested monomial.  Thus, we can write 
\[ f = \sum_{G_1 = H} c_{G, \beta} x_G^\beta + \sum_{H \supsetneq G_2} c_{G, \beta} x_E^{\rk M - \rk G_2 - 1}x_{G_2}^{\beta_2} \cdots x_{G_r}^{\beta_r}. \qedhere
\]
\end{proof}

Since $A = \Chow(M)$ is an Artinian, Gorenstein ring, it follows by linkage \cite[Remark 2.7]{HU87} that the quotient ring $A/(0 : m)$ is also Gorenstein for any monomial $m$, and one might hope that this ring is the Chow ring of a matroid quotient of $M$.  We identify the matroids associated to such quotients in some simple but important cases.

\begin{prop} \label{truncation} 
For a matroid $M$, we have $(0 : x_E) = (x_H \mid H \in \coat(E))$ and $A/(0 : x_E) \iso \Chow(T(M))$. 
\end{prop}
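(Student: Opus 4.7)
The plan is to build an explicit isomorphism $A/J \iso \Chow_\af(T(M))$, where $J := (x_H \mid H \in \coat(E))$, and then deduce the equality $(0 : x_E) = J$ via a socle degree comparison using the Gorenstein property. The inclusion $J \subseteq (0 : x_E)$ is immediate from Remark~\ref{socle:generator}, which gives $x_H x_E = 0$ in $A$ for every hyperplane $H$.

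For the isomorphism, I would define mutually inverse algebra maps $\Phi \colon A/J \to \Chow_\af(T(M))$ and $\Psi \colon \Chow_\af(T(M)) \to A/J$ each acting on generators as $x_F \mapsto x_F$. This makes sense on variables because the flats of $T(M)$ of $T(M)$-rank at least two are exactly $E$ together with all non-hyperplane flats of $M$ of $M$-rank at least two, matching the variables of $A$ that survive modulo $J$. Checking well-definedness reduces to verifying that each type of relation in the atom-free presentation of one ring maps to a relation in the other. The incomparable-flats relations transport immediately. For relations of the form $x_F \sum_{F' \supseteq F \vee_M i} x_{F'}$, I would split into two cases: if $F \vee_M i$ is not a hyperplane of $M$, then $F \vee_{T(M)} i = F \vee_M i$ and the non-hyperplane flats of $M$ above $F \vee_M i$ coincide with the flats of $T(M)$ above $F \vee_{T(M)} i$, so the relation transports directly; if $F \vee_M i$ equals a hyperplane $H$, then $F \vee_{T(M)} i = E$, and the relation $x_F(x_H + x_E) = 0$ in $A$ becomes $x_F x_E = 0$ modulo $J$, which matches the corresponding relation in $\Chow_\af(T(M))$. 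The quadratic double-join relations indexed by pairs $i \neq j \in E$ are handled by the same case analysis; here the assumption $\rk M \geq 3$ ensures $i \vee_M j$ is never a hyperplane, so only the first case occurs.

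To finish, the inclusion $J \subseteq (0 : x_E)$ induces a surjection $\Chow_\af(T(M)) \iso A/J \twoheadrightarrow A/(0 : x_E)$. Both rings are graded Artinian Gorenstein -- the source by the K\"ahler package applied to $T(M)$, and the target by the linkage observation in the paragraph preceding the proposition -- and both have socle degree $\rk M - 2$: the source has socle degree $\rk T(M) - 1 = \rk M - 2$, while the target is isomorphic, up to a degree shift of one, to the principal ideal $x_E A$, which has top degree $\rk M - 1$ since $x_E^{\rk M - 1}$ generates the socle of $A$ by Remark~\ref{socle:generator}. Any surjection between graded Artinian Gorenstein algebras of the same socle degree is an isomorphism, since every nonzero ideal in such a ring meets the one-dimensional socle. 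Hence $(0 : x_E) = J$ and $A/(0 : x_E) \iso \Chow(T(M))$. The main obstacle is the case analysis verifying well-definedness of $\Phi$ and $\Psi$: the linear atom-free relations couple the hyperplane variables to $x_E$ in exactly the way that mirrors the collapse of hyperplanes onto $E$ in the truncation, so the construction only succeeds because the atom-free presentation was chosen to incorporate this coupling.
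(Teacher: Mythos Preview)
Your approach is correct and takes a genuinely different route from the paper. The paper proves both $(0:x_E) = J$ and $\ker\bigl(\pi: A \to \Chow(T(M))\bigr) = J$ by direct computation with the nested monomial basis of Corollary~\ref{monomial:basis}: one writes an arbitrary element $f$ in nested monomials and tracks which terms survive after multiplication by $x_E$, respectively after applying $\pi$. Your argument instead matches presentations to establish $A/J \iso \Chow_\af(T(M))$ directly, and then uses the Gorenstein property and a socle-degree comparison to upgrade the easy inclusion $J \subseteq (0:x_E)$ to equality. Your method avoids the nested-monomial bookkeeping entirely and makes transparent that the equality $(0:x_E) = J$ is forced by Poincar\'e duality once the quotient has been identified; the paper's approach, by contrast, is self-contained and does not invoke the K\"ahler package for $T(M)$.

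One small correction: your claim that ``the assumption $\rk M \geq 3$ ensures $i \vee_M j$ is never a hyperplane'' fails when $\rk M = 3$, since then $\rk(i \vee_M j) = 2 = \rk M - 1$. In that boundary case the type-three relation $(x_H + x_E)^2$ with $H = i \vee_M j$ maps under $\Phi$ to $x_E^2$, which is precisely the corresponding relation in $\Chow_\af(T(M))$ because $i \vee_{T(M)} j = E$ there; so the argument still goes through, but your case split needs this extra line (or you can simply dispose of $\rk M = 3$ at the outset, where both $A/J$ and $\Chow_\af(T(M))$ are visibly $\QQ[x_E]/(x_E^2)$).
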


\begin{proof}
Clearly, the latter ideal is contained in the former by \eqref{hyperplane:relations}.  Suppose that $f \in A$ with $x_Ef = 0$, and write $f = \sum c_{F, \alpha} x_F^\alpha$ as a $\QQ$-linear combination of nested monomials.  For each nested monomial $x_F^\alpha$ in $A$, we note that $x_Ex_F^\alpha = 0$ only if $F_1$ is a hyperplane or $F_1 = E$ and $\alpha_1 = \rk M - \rk F_2 - 1$ (including the possibility that $F_2 = \emptyset$).  Otherwise, $x_Ex_F^\alpha$ is still a nested monomial.  It then follows from the equality $x_Ef = 0$ that 
\[ 
f = \sum_{F_1 \in \coat(E)} c_{F, \alpha} x_F^\alpha + \sum_{F_1 = E, \alpha_1 = \rk M - \rk F_2 - 1} c_{F, \alpha}x^\alpha_F.  
\]
For each monomial in the latter sum, if $F_2 \neq \emptyset$, we note that $\rk F_2 \leq \rk M - 2$ so that there is a hyperplane $H \supsetneq F_2$ as the lattice of flats of $M$ is graded, and $x_E^{\rk M - \rk F_2 - 1}x_{F_2} = -x_H^{\rk H - \rk F_2}x_{F_2}$.  Otherwise, if $F_2 = \emptyset$, we have $x_E^{\rk M - 1} = -x_H^{\rk H}$ for any hyperplane $H$.  This shows that $f \in (x_H \mid H \in \coat(E))$.

It is easily checked that there is a well-defined surjective $\QQ$-algebra map $\pi: A \to \Chow(T(M))$ defined by $x_F \mapsto x_F$ for $F \notin \coat(E)$ and $x_F \mapsto 0$ otherwise.  We will show that $\ker \pi = (x_H \mid H \in \coat(E))$.  Again, it is clear that the latter ideal is contained in the former.  If $f \in \ker \pi$ and we write $f = \sum_\alpha c_{F, \alpha}x_F^\alpha$ as a $\QQ$-linear combination of nested monomials, then 
\[
0 = \pi(f) = \sum_{F_1 \notin \coat(E)} c_{F, \alpha} x_F^\alpha = \sum_{\rk F_1 \leq \rk M -2} c_{F, \alpha} x_F^\alpha + \sum_{F_1 = E, \alpha_1 < \rk M - \rk F_2 - 1} c_{F, \alpha} x_F^\alpha 
\]
expresses $\pi(f)$ as a linear combination of nested monomials in $\Chow(T(M))$.  In particular, note that the latter sum does not include any nested monomials with $\rk F_2 = \rk M - 2$ since $F_2$ is a hyperplane in $T(M)$ so that $x_{F_2}x_E = 0$ by \eqref{hyperplane:relations}.  Hence, the coefficients of all the above monomials in $f$ must be zero so that 
\[
f = \sum_{F_1 \in \coat(E)} c_{F, \alpha} x_F^\alpha +  \sum_{F_1 = E, \alpha_1 =  \rk M - \rk F_2 -1} c_{F, \alpha} x_F^\alpha. 
\]
However, for every nonempty flat $F_2$ we note that $x_{F_2}x_E^{\rk M - \rk F_2 - 1} = -x_{F_2}x_H^{\rk H - \rk F_2}$ for any hyperplane $H \supsetneq F_2$, and $x_E^{\rk M -1} = -x_H^{\rk H}$ for any hyperplane $H$.  Thus, we see that $f \in (x_H \mid H \in \coat(E))$ as wanted, and combining this with the preceding paragraph yields $A/(0 : x_E) \iso \Chow(T(M))$.
\end{proof}

\begin{cor} \label{colons:of:arbitrary:flats}
Let $F$ be a flat of $M$ with $\rk F \geq 2$.
\begin{enumerate}[label = \textnormal{(\alph*)}]
\item \label{kernel:of:restriction} The kernel of the natural surjective homomorphism $\pi_{M,F}: A \to \Chow(M \vert F)$ sending $x_G \mapsto x_G$ if $G \subseteq F$ and $x_G \mapsto 0$ otherwise is $(x_G \mid G \nsubseteq F)$.
\item \label{colon:with:filter} Suppose that $\mathcal{G}$ is a set of flats of $M$ such that $(F, E] \subseteq \mathcal{G}$ and $\mathcal{G} \cap [\emptyset, F] = \emptyset$. If $\rk F \geq 3$, then
\[ 
(x_G \mid G \in \mathcal{G}) : x_F = (x_G \mid G \nsubseteq F) + (x_G \mid G \in \coat(F)). 
\]
Otherwise, $(x_G \mid G \in \mathcal{G}) : x_F = A_+$.
\end{enumerate}
\end{cor}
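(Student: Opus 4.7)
The plan is to leverage the restriction map $\pi_{M,F}$ of part \ref{kernel:of:restriction} together with the truncation identity of Proposition~\ref{truncation} applied to the matroid $M\vert F$.

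For part \ref{kernel:of:restriction}, I would work with the nested-monomial basis of Corollary~\ref{monomial:basis}. Writing any $f \in A$ uniquely as a $\QQ$-linear combination of nested monomials, I would observe that $\pi_{M,F}$ annihilates a nested monomial whose top flat is not contained in $F$ (since then its largest variable is killed) and otherwise carries it to a nested monomial of $\Chow(M\vert F)$. The essential check is that the exponent bounds $\alpha_i < \rk G_i - \rk G_{i+1}$ of Corollary~\ref{monomial:basis} are preserved, which is automatic because the rank function of $M\vert F$ agrees with that of $M$ on subflats of $F$. Since the surviving images form a $\QQ$-basis of $\Chow(M\vert F)$, $f \in \ker \pi_{M,F}$ forces all coefficients of nested monomials whose top flat lies in $[\emptyset,F]$ to vanish, so that $f \in (x_G \mid G \nsubseteq F)$. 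The reverse containment is immediate.

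For the forward inclusion of part \ref{colon:with:filter}, I would take $f$ in the colon and apply $\pi_{M,F}$. The hypothesis $\mathcal{G} \cap [\emptyset,F] = \emptyset$ forces $\pi_{M,F}(x_G) = 0$ for every $G \in \mathcal{G}$, so the relation $f x_F \in (x_G \mid G \in \mathcal{G})$ becomes $\pi_{M,F}(f) \cdot x_F = 0$ in $\Chow(M\vert F)$. Since $F$ is the top flat of $\LL(M\vert F)$, it plays the role of the ground set in Proposition~\ref{truncation} applied to $M\vert F$. When $\rk F \geq 3$, that proposition identifies $(0 : x_F)_{\Chow(M\vert F)}$ with $(x_H \mid H \in \coat(F))$, so $f$ lies in $(x_H \mid H \in \coat(F)) + \ker \pi_{M,F}$, which equals the claimed ideal by part \ref{kernel:of:restriction}. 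When $\rk F = 2$ we have $\Chow(M\vert F) \iso \QQ[x_F]/(x_F^2)$, whose annihilator of $x_F$ is its maximal ideal; pulling back via the surjection $\pi_{M,F}$ gives $f \in A_+$.

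For the reverse inclusion, I would verify the generators of the right-hand side one type at a time. Every flat $G \in (F,E]$ is in $\mathcal{G}$ by hypothesis, and any $G \in \LL_{\geq 2}$ incomparable with $F$ satisfies $x_F x_G = 0$ by the incomparability relations, so $x_G$ lies in the colon for every $G \nsubseteq F$. For $G \in \coat(F)$ when $\rk F \geq 3$ (so $\rk G \geq 2$), the Gr\"obner basis relation $x_G \sum_{G' \supseteq F} x_{G'} = 0$ of Corollary~\ref{atom-free:Gröbner:basis} rearranges to $x_G x_F = -\sum_{G' \supsetneq F} x_G x_{G'} \in (x_{G'} \mid G' \in \mathcal{G})$. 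When $\rk F = 2$, the additional generator to treat is $x_F$ itself, and the relation $(\sum_{G' \supseteq F} x_{G'})^2 = 0$ likewise expresses $x_F^2$ as a combination of products involving $x_{G'}$ for $G' \supsetneq F$. I expect the main technical obstacle to be the bookkeeping in part \ref{kernel:of:restriction}, confirming that nestedness and the exponent inequalities survive under $\pi_{M,F}$; once part \ref{kernel:of:restriction} is established, part \ref{colon:with:filter} reduces to combining it with the truncation identity and the explicit Gr\"obner basis relations of Corollary~\ref{atom-free:Gröbner:basis}.
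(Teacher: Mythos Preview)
Your proposal is correct and follows essentially the same approach as the paper: part \ref{kernel:of:restriction} via the nested-monomial basis, and part \ref{colon:with:filter} by passing to $\Chow(M\vert F)$ through part \ref{kernel:of:restriction} and invoking Proposition~\ref{truncation}. The only cosmetic difference is that the paper packages both inclusions of \ref{colon:with:filter} at once---noting that each side contains $(x_G \mid G \nsubseteq F) = \ker\pi_{M,F}$ and then reducing the entire equality to the case $F = E$ in $\Chow(M\vert F)$---whereas you treat the two inclusions separately and verify the coatom generators directly via the Gr\"obner relation $x_G\sum_{G' \supseteq F} x_{G'} = 0$.
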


\begin{proof}
\ref{kernel:of:restriction}  It is easily checked that there is a well-defined surjective $\QQ$-algebra map $\pi: A \to \Chow(M\vert F)$ defined by the given assignments.  We will show that $\ker \pi = (x_G \mid G \nsubseteq F)$.  Again, it is clear that the latter ideal is contained in the former.  If $f \in \ker \pi$ and we write $f = \sum c_{G, \alpha} x_G^\alpha$ as a $\QQ$-linear combination of nested monomials, then $0 = \pi(f) = \sum_{G_1 \subseteq F} c_{G, \alpha} x_G^\alpha$ expresses $\pi(f)$ as a linear combination of nested monomials in $\Chow(M\vert F)$.  Hence, the coefficients of all the preceding monomials in $f$ must be zero so that $f = \sum_{G_1 \nsubseteq F} c_{G, \alpha} x_G^\alpha \in (x_G \mid G \nsubseteq F)$ as wanted. 

\ref{colon:with:filter}  First, we note that the colon ideal contains $(x_G \mid G \nsubseteq F)$.  Indeed, if $G'$ is flat such that $G' \nsubseteq F$, then either $F \subsetneq G'$ so that $x_{G'} \in (x_G \mid G \in \mathcal{G})$, or $G'$ is incomparable with $F$ so that $x_{G'}x_F = 0$.  Since both ideals in question contain $(x_G \mid G \nsubseteq F)$, it suffices by part \ref{kernel:of:restriction} to show that their images agree in $\Chow(M\vert F)$.  Hence, it suffices to assume $F = E$ and to prove that $(0 : x_E) = (x_H \mid H \in \coat(E))$ when $\rk M \geq 3$ and $(0 : x_E) = A_+$ when $\rk M = 2$, which follow from the preceding proposition and the discussion at the beginning of this section respectively.
\end{proof}

\begin{prop} \label{colon:computations}
Let $\mathcal{H}$ be a nonempty set of hyperplanes of $M$ and $H'$ be any hyperplane.  Then:
\begin{enumerate}[label = \textnormal{(\alph*)}]

\item  \label{restriction}
$(0 : x_{H'}) = (x_F \mid F \nsubseteq H' )$ and $A/(0: x_{H'}) \iso \Chow(M\vert H')$.

\item \label{annihilator:of:hyperplane:ideal}
$(0 : (x_H \mid H \in \mathcal{H})) = (x_F \mid F \nsubseteq H \;\text{for all}\; H \in \mathcal{H})$.

\item \label{hyperplane:colons}
Suppose that $H' \notin \mathcal{H}$.  If $\rk H' \geq 3$ and for every $H \in \mathcal{H}$ there exists an $F \in \coat_\mathcal{H}(H')$ such that $H \cap H' \subseteq F$, then
\[
(x_H \mid H \in \mathcal{H}) : x_{H'} =  (0 : x_{H'}) + (x_F \mid F \in \coat_\mathcal{H}(H'))
\]
Otherwise, if $\rk H' = 2$, then $(x_H \mid H \in \mathcal{H}) : x_{H'} = A_+$. 
\end{enumerate}
\end{prop}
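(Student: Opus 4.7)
The strategy throughout is to exploit the $\QQ$-basis of $A$ by nested monomials (Corollary \ref{monomial:basis}), track the action of multiplication by $x_{H'}$ via the Gr\"obner-basis relations of Corollary \ref{atom-free:Gröbner:basis} and the socle identities of Remark \ref{socle:generator}, and compare with the basis description of $(x_H \mid H \in \mathcal{H})$ given by Lemma \ref{hyperplane:ideal:basis}. For part (a), the inclusion $\supseteq$ is immediate: any flat $F \nsubseteq H'$ is either incomparable to the hyperplane $H'$ or equal to $E$, and in both cases $x_F x_{H'} = 0$. For $\subseteq$, I plan to compute $x_{H'} \cdot x_G^\alpha$ for each nested monomial with $G_1 \subseteq H'$, splitting into four subcases according as $G_1 = H'$ or $G_1 \subsetneq H'$, and according to whether the product forces use of the socle identity $x_{F} x_{H'}^{\rk H' - \rk F} = -x_{F} x_E^{\rk M - \rk F - 1}$ (when $G_1 = H'$ and $\alpha_1 + 1 = \rk H' - \rk G_2$) or the length-one relation $x_{G_1}(x_{H'} + x_E) = 0$ (when $G_1 \in \coat(H')$). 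In each subcase the product is $\pm$ a nonzero nested monomial, and the four subcases produce monomials with distinguishable leading-flat/exponent signatures, so multiplication by $x_{H'}$ is injective on the $\QQ$-span of nested monomials with $G_1 \subseteq H'$. The identification $A/(0 : x_{H'}) \iso \Chow(M \vert H')$ then follows from part (a) of Corollary \ref{colons:of:arbitrary:flats}.

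Part (b) follows formally from part (a): the annihilator of a sum of cyclic ideals equals the intersection of their annihilators, so $(0 : (x_H \mid H \in \mathcal{H})) = \bigcap_{H \in \mathcal{H}} (x_F \mid F \nsubseteq H)$. Each summand is the kernel of the restriction $\pi_{M, H}$ by Corollary \ref{colons:of:arbitrary:flats}, and this kernel consists precisely of those elements whose nested monomial expansions contain only monomials with $G_1 \nsubseteq H$. Hence $f$ is in the intersection iff every nested monomial in its expansion has $G_1 \nsubseteq H$ for every $H \in \mathcal{H}$, which is exactly the ideal on the right.

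Part (c) with $\rk H' \geq 3$ is the main content. For $\supseteq$, $K := (0 : x_{H'}) \subseteq I := (x_H \mid H \in \mathcal{H}) : x_{H'}$ is automatic; given $F \in \coat_\mathcal{H}(H')$, write $F = H \cap H'$ for some $H \in \mathcal{H}$, and use semimodularity together with $\rk F = \rk H' - 1 = \rk H - 1$ to place $F \in \coat(H)$ as well. The two length-one Gr\"obner relations $x_F(x_{H'} + x_E) = 0 = x_F(x_H + x_E)$ then yield $x_F x_{H'} = x_F x_H \in (x_H \mid H \in \mathcal{H})$. For $\subseteq$, given $f \cdot x_{H'} \in (x_H \mid H \in \mathcal{H})$, I will reduce modulo $K$ to assume $f = \sum c_{G, \alpha} x_G^\alpha$ has every $G_1 \subseteq H'$, and then match the images $x_G^\alpha \cdot x_{H'}$ (computed in part (a)) against the basis of the ideal from Lemma \ref{hyperplane:ideal:basis}. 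Because $H' \notin \mathcal{H}$, the case-(2.1) and case-(3) contributions (images with leading flat $H'$) must have coefficient zero; case-(4) contributions survive only when $G_1 \in \coat_\mathcal{H}(H')$, and those terms already lie in $(x_F \mid F \in \coat_\mathcal{H}(H'))$.

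The main obstacle is case (2.2), where $x_G^\alpha = x_{H'}^{\alpha_1} x_{G_2}^{\alpha_2} \cdots$ with $\alpha_1 = \rk H' - \rk G_2 - 1$ has leading flat $H'$, and whose image rewrites via Remark \ref{socle:generator} to a Lemma~\ref{hyperplane:ideal:basis}(ii) basis monomial only when some $H \in \mathcal{H}$ strictly contains $G_2$. The hypothesis of part (c) then supplies $F \in \coat_\mathcal{H}(H')$ with $G_2 \subseteq H \cap H' \subseteq F$, and the strict containment $G_2 \subsetneq F$ follows since $\rk G_2 \leq \rk H' - 2 < \rk F$. The key rewrite is that the socle identity of Remark \ref{socle:generator} applied \emph{inside} $A/K \iso \Chow(M \vert H')$—with $H'$ playing the role of the top class and $F$ a hyperplane of $M \vert H'$—yields $\bar x_{G_2} \bar x_{H'}^{\alpha_1} = -\bar x_{G_2} \bar x_F^{\alpha_1}$, which lifts to $x_{G_2} x_{H'}^{\alpha_1} \equiv -x_{G_2} x_F^{\alpha_1} \pmod{K}$. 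Multiplying by $x_{G_2}^{\alpha_2 - 1} x_{G_3}^{\alpha_3} \cdots$ places $x_G^\alpha$ in $K + (x_F) \subseteq K + (x_{F'} \mid F' \in \coat_\mathcal{H}(H'))$, completing $\subseteq$. Finally, when $\rk H' = 2$ (forcing $\rk M = 3$), the identity $(x_H \mid H \in \mathcal{H}) : x_{H'} = A_+$ is short: every generator of $A_+$ other than $x_{H'}$ lies in $K \subseteq I$, while $x_{H'}^2 = -x_E^2 = x_H^2 \in (x_H \mid H \in \mathcal{H})$ for any $H \in \mathcal{H}$ by Remark \ref{socle:generator}; the reverse inclusion follows because the hyperplane classes and $x_E$ are linearly independent in $A_1$, so $x_{H'} \notin (x_H \mid H \in \mathcal{H})$.
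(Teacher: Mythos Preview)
Your argument is correct. Parts (a), (b), and the $\rk H' = 2$ case of (c) follow essentially the same path as the paper: track nested monomials under multiplication by $x_{H'}$, use the basis description of Lemma~\ref{hyperplane:ideal:basis}, and intersect annihilators. The case distinctions you set up in (a) match the paper's analysis in the proof of Lemma~\ref{hyperplane:ideal:basis}, and injectivity of multiplication by $x_{H'}$ on the span of nested monomials with $G_1 \subseteq H'$ is exactly what the paper uses.

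Where you genuinely diverge from the paper is in the main inclusion of part~(c) for $\rk H' \geq 3$, specifically in handling the case-(2.2) monomials $x_{H'}^{\alpha_1} x_{G_2}^{\alpha_2}\cdots$ with $\alpha_1 = \rk H' - \rk G_2 - 1$ and $G_2 \subsetneq H$ for some $H \in \mathcal{H}$. The paper packages the surviving terms into the ideal $(x_F x_{H'}^{\rk H' - \rk F - 1},\, x_{H'}^{\rk H' - 1} \mid F \subseteq H \cap H'\ \text{for some}\ H \in \mathcal{H})$ and then, working in $\Chow(M\vert H')$, invokes Gorenstein linkage: since $J = (x_F \mid F \in \coat_\mathcal{H}(H'))$ satisfies $(0:(0:J)) = J$, it suffices to show each generator of $(0:J)$ (computed via part~(b)) annihilates $x_F x_{H'}^{\rk H' - \rk F - 1}$. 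You instead use the hypothesis to find $F \in \coat_\mathcal{H}(H')$ with $G_2 \subseteq H \cap H' \subseteq F$, then apply the hyperplane relation of Remark~\ref{socle:generator} \emph{inside} $\Chow(M\vert H')$ (where $F$ is a hyperplane) to rewrite $x_{G_2} x_{H'}^{\alpha_1} \equiv -x_{G_2} x_F^{\alpha_1}$ directly, landing the monomial in $(x_F)$. Your route is more elementary---it avoids the linkage machinery entirely and uses only the explicit relations of Corollary~\ref{atom-free:Gröbner:basis}---but is tailored to the specific monomials that arise; the paper's double-annihilator argument is more conceptual and would absorb any element killed by $(0:J)$, at the cost of appealing to the Gorenstein property. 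One small point worth making explicit in your write-up: the case $G_2 = \emptyset$ (so $x_G^\alpha = x_{H'}^{\rk H' - 1}$) uses the third relation of \eqref{hyperplane:relations} rather than the second, but the conclusion is the same.
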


\begin{proof}

\ref{restriction}  We have already seen in the proof of Lemma \ref{hyperplane:ideal:basis} that for any nested monomial $x_F^\alpha$ in $A$ we have $x_{H'}x_F^\alpha = 0$ if $F_1 \nsubseteq H'$ and $x_{H'}x_F^\alpha$ is a scalar multiple of a nested monomial otherwise.  Furthermore, multiplication by $x_{H'}$ takes distinct nested monomials with $F_1 \subseteq H'$ to scalar multiples of distinct nested monomials.  In particular, it is immediate that the latter ideal is contained in the former.  Moreover, if $f \in A$ with $x_{H'}f = 0$ and we write $f = \sum c_{F, \alpha} x_F^\alpha$ as a $\QQ$-linear combination of nested monomials, it follows from the equality $x_{H'}f = 0$ that $f = \sum_{F_1 \nsubseteq H'} c_{F, \alpha} x_F^\alpha$, which shows that $f \in (x_F \mid F \nsubseteq H')$.  The isomorphism $A/(0 : x_{H'}) \iso \Chow(M\vert H')$ then follows from part \ref{kernel:of:restriction} of the preceding corollary.

\ref{annihilator:of:hyperplane:ideal}  Consider the homomorphism $\eta: A \to \bigoplus_{H \in \mathcal{H}} \Chow(M\vert H)$ sending each $f \in A$ to the tuple of its images under the natural maps $A \to \Chow(M\vert H)$.  By part \ref{restriction}, we know that $\ker \eta = \bigcap_{H \in \mathcal{H}} (0 : x_H) = (0 : (x_H \mid H \in \mathcal{H}))$.  Clearly, this ideal contains $x_F$ for every flat $F$ such that $F \nsubseteq H$ for all $H \in \mathcal{H}$.  Conversely, if $f \in \ker \eta$, then the proof of the previous part shows that $f$ has an expansion in terms of nested monomials of the form $f = \sum_{F_1 \nsubseteq H, \;\text{all}\; H \in \mathcal{H}} c_{F, \alpha} x_F^\alpha$.  Hence, $f \in (x_F \mid F \nsubseteq H \;\text{for all}\; H \in \mathcal{H})$ as wanted.

\ref{hyperplane:colons} The statement is clear if $\rk H' = 2$, since $x_{H'}x_F = 0$ for every $F \neq H'$ and $x_{H'}^2 = -x_E^2 = x_H^2 \in (x_H \mid H \in \mathcal{H})$ for any $H \in \mathcal{H}$ by \eqref{hyperplane:relations}.  So, we may assume that $\rk H' \geq 3$.  In that case, the latter ideal is again clearly contained in the former since $x_Fx_{H'} = -x_Fx_E = x_Fx_H$ if $F = H \cap H'$ and $\rk F = \rk M - 2$.  Suppose that $f \in A$ with $x_{H'}f \in (x_H \mid H \in \mathcal{H})$, and write $f = \sum c_{F, \alpha} x_F^\alpha$ as a $\QQ$-linear combination of nested monomials.  Then: 
\begin{align*}
x_{H'}f = \sum_{F_1 \subseteq H'} c_{F, \alpha} x_{H'}x_F^\alpha &= \sum_{\rk H' - \rk F_1 \geq 2} c_{F, \alpha} x_{H'}x_F^\alpha + \sum_{\substack{F_1 = H' \\ \alpha_1 < \rk H' - \rk F_2 -1}} c_{F, \alpha} x_{H'}^{\alpha_1 + 1}x_{F_2}^{\alpha_2} \cdots x_{F_r}^{\alpha_r} \\
&- \sum_{\rk H' - \rk F_1 = 1} c_{F, \alpha} x_Ex_F^\alpha - \sum_{\substack{F_1 = H' \\ \alpha_1 = \rk H' - \rk F_2 -1}} c_{F,\alpha} x_{E}^{\rk M - \rk F_2-1}x_{F_2}^{\alpha_2} \cdots x_{F_r}^{\alpha_r}
\end{align*}
By Lemma \ref{hyperplane:ideal:basis}, the coefficients of the monomials in the first two sums must all be zero, and the coefficient of a monomial in the last two sums is also zero unless $F_1 \subsetneq H$ for some $H \in \mathcal{H}$ or $F_1 = H'$ and $F_2 \subsetneq H$ for some $H \in \mathcal{H}$.  Hence, we have
\begin{align*}
f &= \sum_{F_1 \nsubseteq H'} c_{F, \alpha} x_F^\alpha + \sum_{\substack{\rk H' - \rk F_1 = 1 \\ F_1 \subsetneq H \in \mathcal{H}}} c_{F, \alpha} x_F^\alpha + \sum_{F_2 \subsetneq H \in \mathcal{H}} c_{F, \alpha} x_{H'}^{\rk H' - \rk F_2-1}x_{F_2}^{\alpha_2} \cdots x_{F_r}^{\alpha_r}.
\end{align*}
It follows that $f \in (0 : x_{H'}) + (x_Fx_{H'}^{\rk H' - \rk F -1}, x_{H'}^{\rk H' -1} \mid F \subseteq H \cap H' \;\text{for some}\; H \in \mathcal{H})$.  It remains to show that 
\[
(0 : x_{H'}) + (x_Fx_{H'}^{\rk H' - \rk F -1}, x_{H'}^{\rk H' -1} \mid F \subseteq H \cap H' \;\text{for some}\; H \in \mathcal{H})
\]
is contained in the ideal $(0 : x_{H'}) + (x_F \mid F \in \coat_\mathcal{H}(H'))$.  After quotienting by $(0 : x_{H'})$, it suffices by part \ref{restriction} to show that 
\[
(x_Fx_{H'}^{\rk H' - \rk F -1}, x_{H'}^{\rk H' -1} \mid F \subseteq H \cap H' \;\text{for some}\; H \in \mathcal{H})
\]
is contained in the ideal $J = (x_F \mid F \in \coat_\mathcal{H}(H'))$ in $\Chow(M\vert H')$.

Since $\mathcal{H} \neq \emptyset$, we have $\coat_\mathcal{H}(H') \neq \emptyset$ by assumption so that $J \neq 0$.  By Remark \ref{socle:generator}, we know $x_{H'}^{\rk H' - 1}$ generates the socle of $\Chow(M\vert H')$ and is, therefore, contained in $J$.  We must also show that $x_Fx_{H'}^{\rk H' - \rk F - 1} \in J$ if $F \subseteq H \cap H'$ for some $H \in \mathcal{H}$ and $\rk H' \geq \rk F + 2$.  For this, since the elements $\coat_\mathcal{H}(H')$ are hyperplanes of the matroid $M\vert H'$, we note that 
\[ 
(0 : J) = (x_G \mid G \subseteq H', G \nsubseteq F'\; \text{for all}\; F' \in \coat_\mathcal{H}(H'))
\] 
by part \ref{annihilator:of:hyperplane:ideal}. Because $\Chow(M\vert H')$ is Gorenstein, it follows by linkage \cite[Theorem 21.23]{Eis95} that $(0 : (0 : J)) = J$, and so, it suffices to note that each such $x_G$ annihilates $x_Fx_{H'}^{\rk H' - \rk F - 1}$ to prove that $x_Fx_{H'}^{\rk H' - \rk F - 1} \in J$.  Given a flat $G$ with $G \subseteq H'$ and $G \nsubseteq F'$ for all $F' \in \coat_\mathcal{H}(H')$, we note that $G \nsubseteq F$ since otherwise we would have $G \subseteq F \subseteq H \cap H' \subseteq F'$ for some $F' \in \coat_\mathcal{H}(H')$ by assumption.  Consequently, either $G$ and $F$ are incomparable so that $x_Fx_G = 0$ or $F \subsetneq G$ so that $x_Gx_{H'}^{\rk H' - \rk F - 1} = 0$ by Corollary \ref{atom-free:Gröbner:basis} as $\rk H' - \rk F - 1 \geq \rk H' - \rk G$.  And so, either way we have $x_Fx_Gx_{H'}^{\rk H' - \rk F - 1} = 0$ as wanted.
\end{proof}


The condition in part \ref{hyperplane:colons} of the above proposition that for every $H \in \mathcal{H}$ there exists an $F \in \coat_\mathcal{H}(H')$ such that $H \cap H' \subseteq F$ is essential for the colon ideal to be generated by linear forms.  We give some examples below showing how the colon ideal can fail to be generated by linear forms if this condition is not satisfied.

\begin{example} \label{good:sets:of:hyperplanes:obstruction:1}
Consider the uniform matroid $U_{5,6}$ on the ground set $E = \{1, 2, 3, 4, 5, 6\}$.  Since every subset of $E$ of size at most 5 is independent, the hyperplanes of $U_{5, 6}$ are precisely the subsets of size 4.  For $H' = 1256$, the proof of the proposition shows that in $\Chow(U_{5,6})$ we have
\[ 
(x_{1234}) : x_{1256} = (0 : x_{1256} ) + (x_{12}x_{1256}, x_{1256}^3)  = (0 : x_{1256} ) + (x_{12}x_{1256}) 
\]
where $x_{12}x_{1256} \notin (0 : x_{1256})$ since it is a nested monomial of $\Chow(U_{5,6}\vert H')$.

If we take $M = T_{H'}(U_{5,6})$ to be the principal truncation $U_{5,6}$ with respect to $H'$, then $M$ is a rank 4 matroid whose hyperplanes are $H'$ and all subsets of $E$ of size 3 that are not contained in $H'$.  Even in this case, we still have
\[ 
(x_{234}) : x_{1256} = (0 : x_{1256} ) + (x_{1256}^2),
\]
where $x_{1256}^2 \notin (0 : x_{1256})$ since it is a nested monomial of $\Chow(M\vert H')$.

In the first example above, the issue is that the two hyperplanes in question have rank 4 but their meet $1234 \wedge 1256 = 12$ has rank $2$.  One might hope that by choosing only sets of hyperplanes that are locally connected (in the sense that between any two hyperplanes there is a sequence of hyperplanes in the set such each pair of consecutive hyperplanes has a meet of rank $\rk M - 2$), we might avoid quadratic terms in these colon ideals.  The following example shows even this is not enough.

Again, consider $\Chow(U_{5,6})$.  Although the set of hyperplanes $\{1234,2345,3456,1456,1256,1236\}$ in $U_{5,6}$ is locally connected (the corresponding graph is a $6$-cycle), one computes that
\[
(x_{1234},x_{2345},x_{3456},x_{1456},x_{1236}):x_{1256} = (0:x_{1256}) + (x_{125},x_{126},x_{12}x_{1256}).\]
where the quadratic monomial $x_{12}x_{1256}$ is a minimal generator of this colon ideal.  \end{example}

\begin{thm} \label{Chow:rings:of:matroids:are:Koszul}
The Chow ring of a matroid has a Koszul filtration.  
\end{thm}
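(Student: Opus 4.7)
The plan is to exhibit an explicit Koszul filtration $\mathcal{F}$ of $A = \Chow(M)$ built from the total coatom ordering $\prec$ on $\LL = \LL(M)$ from Theorem~\ref{geometric:lattices:are:meet-shellable}. For each flat $F \in \LL$ and each $\prec$-initial segment $\mathcal{G}$ of $\coat(F)$, define the linear ideal
\[
I(F, \mathcal{G}) := (x_G \mid G \in \LL_{\geq 2},\ G \nsubseteq F) + (x_H \mid H \in \mathcal{G}).
\]
The special cases $I(E, \emptyset) = (0)$ and $I(\emptyset, \emptyset) = A_+$ are immediate (using the convention $\coat(\emptyset) = \emptyset$), and each $I(F, \mathcal{G})$ is generated by linear forms, so conditions (1) and (2) of a Koszul filtration hold automatically as long as $\mathcal{F}$ contains the family $\{I(F, \mathcal{G})\}$ together with a few auxiliary linear ideals introduced below.

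The heart of the argument is the colon computation that verifies condition (3). Given a nonzero $I = I(F, \mathcal{G}) \in \mathcal{F}$ with $\mathcal{G} \neq \emptyset$, set $H' = \max_{\prec} \mathcal{G}$, $\mathcal{G}' = \mathcal{G} \setminus \{H'\}$, and $J = I(F, \mathcal{G}')$. Then $J \subsetneq I$ with $I/J$ cyclic, generated by the class of $x_{H'}$. Using Corollary~\ref{colons:of:arbitrary:flats}\ref{kernel:of:restriction} to identify $\ker(\pi_{M,F}) = (x_G \mid G \nsubseteq F)$ and passing the computation to $\Chow(M\vert F)$, in which the image of $J$ is a pure hyperplane ideal, we apply Proposition~\ref{colon:computations}\ref{hyperplane:colons} there to obtain
\[
(J : x_{H'}) = \begin{cases} I(H',\, \coat_{\mathcal{G}'}(H')) & \text{if } \rk H' \geq 3, \\ A_+ & \text{if } \rk H' = 2. \end{cases}
\]
The delicate hypothesis of Proposition~\ref{colon:computations}\ref{hyperplane:colons}---that for each $H \in \mathcal{G}'$ there exists $F' \in \coat_{\mathcal{G}'}(H')$ with $H \wedge H' \subseteq F'$---is exactly what property~(i) of the total coatom ordering supplies: the element $H''$ furnished by \ref{locally:rooted} satisfies $H'' \prec H'$ and hence lies in the initial segment $\mathcal{G}'$, so $H'' \wedge H' \in \coat_{\mathcal{G}'}(H')$ and $H \wedge H' \leq H'' \wedge H'$. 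Property~(ii) simultaneously ensures that $\coat_{\mathcal{G}'}(H')$ is itself a $\prec$-initial segment of $\coat(H')$, so the colon lands back in $\mathcal{F}$.

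The main obstacle is handling the boundary ideals $I(F, \emptyset)$ for $F \neq E$; these arise as colons precisely when $\mathcal{G} = \{H'\}$ is a singleton, forcing $H' = \min_{\prec} \coat(F)$, in which case the computation above collapses to $(J : x_{H'}) = I(H', \emptyset)$. These boundary ideals do not admit a predecessor within $\{I(F, \mathcal{G}) \mid \mathcal{G} \neq \emptyset\}$, so we augment $\mathcal{F}$ with auxiliary linear ideals obtained by adjoining the ``top'' variable $x_E$ to initial segments of coatom variables of $F$: using the identities in~\eqref{hyperplane:relations} and Remark~\ref{socle:generator} on the socle generator $x_E^{\rk M - 1}$, one checks that successively removing a maximal generator from such an auxiliary ideal produces a colon equal to $A_+$ or to another ideal already included in $\mathcal{F}$. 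An induction on $\rk F$ descending through the restrictions $M\vert F$ (whose Chow rings have the analogous filtration on a matroid of strictly smaller rank) ties the construction together. Assembling the pieces yields a Koszul filtration for $A$, whence $A$ is Koszul by \cite[Proposition~1.2]{CTV01}.
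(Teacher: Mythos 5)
Your construction of the ``initial segment'' ideals $I(F,\mathcal{G})$ is essentially identical to the paper's family $\mathcal{F}_1$, and your colon computation for the case $\mathcal{G} \neq \emptyset$ — invoking Corollary~\ref{colons:of:arbitrary:flats}, passing to $\Chow(M\vert F)$, applying Proposition~\ref{colon:computations}\ref{hyperplane:colons}, and verifying its hypothesis via property~\ref{locally:rooted} and its conclusion via property~\ref{good:restricted:covers} of the total coatom ordering — is correct and matches the paper exactly. So the core of your argument is sound.

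The genuine gap is in your treatment of the boundary ideals $I(F,\emptyset) = (x_G \mid G \nsubseteq F)$ for $\emptyset \neq F \subsetneq E$. When $\mathcal{G} = \emptyset$, you cannot remove a coatom variable, so you need a \emph{different} predecessor $J \subsetneq I(F,\emptyset)$ in the filtration with cyclic quotient, and such a $J$ does not exist within the family $\{I(F',\mathcal{G}')\}$: the generating set of any $I(F',\mathcal{G}')$ has the form $\{G \mid G \nsubseteq F'\} \cup \mathcal{G}'$ with $\mathcal{G}' \subseteq \coat(F')$, and one checks that no choice of $F'$ and $\mathcal{G}'$ yields $\{G \mid G \nsubseteq F\}$ minus a single element. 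The paper resolves this by introducing a second family $\mathcal{F}_0 = \{(x_G \mid G \in \mathcal{U}) : \mathcal{U} \subseteq \LL_{\geq 2} \text{ an up-set}\}$; since $\{G \mid G \nsubseteq F\}$ is an up-set, one can remove a \emph{minimal} element $G'$ (not a coatom variable) to get a smaller up-set ideal, and Corollary~\ref{colons:of:arbitrary:flats}\ref{colon:with:filter} computes the colon directly as an ideal back in $\mathcal{F}_1$ (or $A_+$). Your proposed fix — ``adjoining the top variable $x_E$ to initial segments of coatom variables of $F$'' — does not supply this: $x_E$ already lies in $I(F,\emptyset)$ whenever $F \neq E$, so adjoining it adds nothing, and the ideals you describe do not interpolate between $(0)$ and $I(F,\emptyset)$. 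The closing appeal to ``induction on $\rk F$'' also cannot substitute for exhibiting actual ideals of $A$, since a Koszul filtration must consist of ideals in the fixed ring $A$, not in the various quotients $\Chow(M\vert F)$. To close the gap, you should replace your vague auxiliary family with the up-set ideals $\mathcal{F}_0$ and verify the colon condition for them using Corollary~\ref{colons:of:arbitrary:flats}\ref{colon:with:filter}.
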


\begin{proof}
Let $M$ be a matroid with lattice of flats $\LL$.  As previously noted at the beginning of this subsection, we may assume without loss of generality that $M$ is simple and $\rk M \geq 3$.  By Theorem \ref{geometric:lattices:are:meet-shellable}, we also know that the lattice $\LL$ admits a total coatom order $\prec$.  In what follows, all ideals are considered with respect to the atom-free presentation of $A = \Chow(M)$, and all initial segments of flats are with respect to the chosen total coatom ordering.  However, all up-sets refer to the natural partial order of $\LL$.  Consider the collections of ideals:
\begin{align*} 
\mathcal{F}_0 &= \{(x_G \mid G \in \mathcal{G}) \mid \mathcal{G} \subseteq \LL_{\geq 2} \;\text{an up-set} \} \\
\mathcal{F}_1 &= \{(x_G \mid G \nsubseteq F) + (x_G \mid G \in \mathcal{G}) \mid F \in \LL_{\geq 2}, \;\mathcal{G} \; \text{an initial segment covered by}\; F \},
\end{align*}
and set $\mathcal{F} = \mathcal{F}_0 \cup \mathcal{F}_1$. Note that $(0) \in \mathcal{F}_0$ when $\mathcal{G} = \emptyset$ and that $A_+ \in \mathcal{F}_0$ when $\mathcal{G} = \LL_{\geq 2}$.  We claim that $\mathcal{F}$ is the desired Koszul filtration. 

Let $0 \neq I \in \mathcal{F}_0$ so that $I = (x_G \mid G \in \mathcal{G})$ for some nonempty up-set $\mathcal{G}$.  We then choose any flat $G' \in \mathcal{G}$ minimal in $\mathcal{G}$ with respect to the partial order of $\LL$ and set $\mathcal{G}' = \mathcal{G} \setminus \{G'\}$.  It is easy to see that $\mathcal{G}'$ is also an up-set since we are removing a minimal element of $\mathcal{G}$.   Consider the ideal $J = (x_G \mid G \in \mathcal{G}') \in \mathcal{F}_0$.  Clearly $J \subseteq I$ and $I/J$ is cyclic.  Since $\mathcal{G}'$ contains all flats properly containing $G'$ and none of the flats contained in $G'$, it follows from Corollary \ref{colons:of:arbitrary:flats} that
\[ (J : I) = (J : x_{G'}) = (x_G \mid G \nsubseteq G') + (x_G \mid G \in \coat(G') ) \]
if $\rk G' \geq 3$ and $(J : I) = A_+$ if $\rk G' = 2$.  In the former case, it follows that $(J : I) \in \mathcal{F}_1$, and we have $(J : I) \in \mathcal{F}_0$ in the latter case. 

Let $0 \neq I \in \mathcal{F}_1$ so that $I = (x_G \mid G \nsubseteq F) + (x_G \mid G \in \mathcal{G})$ for some flat $F$ and $\mathcal{G}$ an initial segment of flats covered by $F$.  If $\mathcal{G} \neq \emptyset$, then $\rk G \geq 3$, and we can take $G'$ to be the largest element of $\mathcal{G}$ with respect to the total coatom order and $\mathcal{G}' = \mathcal{G} \setminus \{G'\}$.  Then $J = (x_G \mid G \nsubseteq F) + (x_G \mid G \in \mathcal{G}') \in \mathcal{F}_1$.  Again, $J \subseteq I$ with $I/J$ cyclic.  Moreover
\[ (J : I) = (J : x_{G'}) = (x_G \mid G \nsubseteq G') + (x_G \mid G \in \coat_\mathcal{G}(G')) \]
by Corollary \ref{colons:of:arbitrary:flats} and part \ref{hyperplane:colons} of Proposition \ref{colon:computations} applied to $M\vert F$.  By the definition of a total coatom ordering, $\coat_\mathcal{G}(G')$ is an initial segment of flats covered by $G'$ so that $(J : I) \in \mathcal{F}_1$.  On the other hand, if $\mathcal{G} = \emptyset$, then $I = (x_G \mid G \nsubseteq F) \in \mathcal{F}_0$, and so, we have already seen in the preceding paragraph how to choose an ideal contained in $I$ with the desired properties.  Thus, $\mathcal{F}$ is a Koszul filtration for $A$.
\end{proof}

We get the following corollary to our main theorem.

\begin{cor}
The Chow ring of a matroid has a rational Poincar\'e series.
\end{cor}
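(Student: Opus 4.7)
The proof is immediate from Theorem \ref{Chow:rings:of:matroids:are:Koszul} combined with Fröberg's formula. The plan is to invoke the Koszulness of $\Chow(M)$ established in the preceding theorem, and then deduce rationality of the Poincaré series directly from the Hilbert series identity \eqref{PHS1}.

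More precisely, by Theorem \ref{Chow:rings:of:matroids:are:Koszul}, the algebra $\Chow(M)$ admits a Koszul filtration, and so by \cite[Proposition~1.2]{CTV01} it is Koszul. Fröberg's characterization \eqref{PHS1} then gives
\[
\Poin_\QQ^{\Chow(M)}(t) = \frac{1}{\HS_{\Chow(M)}(-t)}.
\]
Since $\Chow(M)$ is Artinian (as noted after Corollary \ref{monomial:basis}, following \cite{AHK18}), its Hilbert series $\HS_{\Chow(M)}(t)$ is a polynomial, hence so is $\HS_{\Chow(M)}(-t)$. The reciprocal of a nonzero polynomial is a rational function, so the Poincaré series is rational. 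There is essentially no obstacle here: the work has already been done in establishing Theorem \ref{Chow:rings:of:matroids:are:Koszul}; this corollary is a one-line consequence. The same argument applied to the augmented Chow ring yields the analogous statement in that setting as well.
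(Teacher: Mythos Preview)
Your proof is correct and matches the paper's own argument essentially verbatim: invoke Koszulness from Theorem~\ref{Chow:rings:of:matroids:are:Koszul}, then apply Fr\"oberg's identity \eqref{PHS1} together with rationality of the Hilbert series. The only cosmetic difference is that you use the Artinian property to conclude the Hilbert series is a polynomial, whereas the paper cites the more general fact that Hilbert series of standard graded algebras are always rational; either observation suffices.
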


\begin{proof}
It is well-known that any commutative Koszul $\kk$-algebra $A$ has a rational Poincar\'e series; indeed, the Hilbert Series $\HS_A(t)$ is always rational so that the Poincare series $\Poin_\kk^A(t) = \frac{1}{\HS_A(-t)}$ is as well by \eqref{PHS1}.
\end{proof}

\section{The Koszul property of the augmented Chow ring of a matroid}\label{Saugmented}

\subsection{The augmented Chow ring of a Matroid}\label{SSaugchow}

The \term{augmented Chow ring} of a simple matroid $M$ with lattice of flats $\LL = \LL(M)$  is the ring
\[\aChow(M) := S_M/(I_M + J_M),\]
where
\[S_M = \QQ[y_i, x_F \mid i \in E, F \in \LL \setminus \{E\} ],\]
and $I_M$ and $J_M$ are the ideals
\begin{align*}
I_M &= (y_i - \textstyle \sum_{i \notin F} \, x_F \mid i \in E ),\\[1 ex]
J_M &= (x_Fx_G \mid F,G \;\text{incomparable}\;) + (y_ix_F \mid i \in E, i \notin F).
\end{align*}
Augmented Chow rings of matroids were introduced in \cite{BHMPW20a} and employed in \cite{BHMPW20b} as a key ingredient in the resolution of the Top-Heavy Conjecture that interpolates between the combinatorics of the lattice of flats of a matroid (as encoded by its graded M\"obius algebra) and the good algebraic properties of the Chow ring of the matroid.  In particular, it is shown \cite[Theorem 2.19]{BHMPW20b} that the augmented Chow ring of a matroid is also a quadratic Artinian Gorenstein graded algebra.  It was pointed out to us by Chris Eur that the augmented Chow ring of $M$ could be viewed as the Feichtner-Yuzvinsky Chow ring $D(\LL, \mathcal{G})$ of the lattice of flats of a related matroid, the free coextension of $M$, with respect to a certain building set.  Using this observation, we show how to obtain an atom-free presentation for the augmented Chow ring of a matroid and that all of the results of the preceding section carry over with only minor modifications to show that the augmented Chow ring of a matroid is also Koszul.

We begin by sketching the details of Eur's observation about the augmented Chow ring.  Let $M$ be a simple matroid on the ground set $E$.  The \term{free coextension} of $M$ is the matroid $(M^* + e)^*$; it is the dual of the free single element extension of the dual matroid of $M$.  We refer the interested reader to \cite[Sections 2.1, 7.2]{Oxl11} for further details on duality and free extensions of matroids.  For our purposes, it suffices to note that the free coextension $(M^* + e)^*$ is a matroid on the ground set $E \cup \{e\}$, where $e$ is any element not in $E$, with rank function defined as follows for any set $F \subseteq E$: \vspace{0.5 em}
\begin{align*}
\rk_{(M^* + e)^*} F &= \left\{\begin{array}{ll}
\rk_M F + 1, & \text{if}\; \cl_{M^*}(E \setminus F) \neq E \\[0.5 em]
\rk_M F, & \text{if}\; \cl_{M^*}(E \setminus F) = E
\end{array}\right. \\[0.5 em]
\rk_{(M^* + e)^*}(F \cup e) &= \rk_M F + 1 \\[-0.5 em]
\end{align*}
Using the description of the rank function of the dual of $M$, it is easily checked that the condition $\cl_{M^*}(E \setminus F) = E$ is satisfied if and only if $\rk_M F = \abs{F}$, which is equivalent to $F$ being an independent set of $M$.  From this, it can be seen that a set of the form $F \cup e$ is a flat of the free coextension of $M$ if and only if $F$ is a flat of $M$, and a set $F$ is a flat of the free coextension if and only if $F$ is an independent set of $M$.  Thus, in a way, the free coextension treats independent sets and flats of $M$ on equal footing.  By identifying the flat $F$ of $M$ with the flat $F \cup e$ of the free coextension, we see that the lattice of  flats of the free coextension contains an isomorphic copy of the lattice of flats of $M$, just shifted upward in rank by one, in addition to all independent sets of $M$ with their usual rank.  

Consider the subset $\mathcal{G}_\aug \subseteq \LL((M^* + e)^*)$ defined by
\[
\mathcal{G}_\aug = \{ i \mid i \in E\} \cup \{F \cup e \mid F \in \LL(M) \}.
\]
We claim that $\mathcal{G}_\aug$ is a building set for the lattice of flats of the free coextension.  By the preceding paragraph, we know that every non-minimal element of the lattice of flats of the free coextensions is either a nonempty independent set of $M$ or of the form $F \cup e$ for some flat $F$ of $M$.  When $F$ is a flat of $M$, $F \cup e \in \mathcal{G}_\aug$ so that the required poset isomorphism for $\mathcal{G}_\aug$ to be building is just the identity map $[\emptyset, F \cup e] \to [\emptyset, F \cup e]$.  On the other hand, when $F$ is an independent set of $M$, $\max (\mathcal{G}_\aug)_{\leq F} = \{ i \mid i \in F\}$.  Since the interval $[\emptyset, F]$ is just the Boolean lattice of all subsets of $F$, it is easily seen that the map $\phi_F: \prod_{i \in F} [\emptyset, i] \to [\emptyset, F]$ given by $\phi_F(A_1, \dots, A_r) = \bigcup_{j = 1}^r A_j$ is the required poset isomorphism for $\mathcal{G}_\aug$ to be building.

The \term{Feichtner-Yuzvinsky presentation} of the augmented Chow ring of a simple matroid $M$ is the ring
\[ \aChow_{FY}(M) = \QQ\left[y_i, y_{F \cup e} \mid i \in E, F \in \LL \right]/I_{FY}(M), \]
where
\begin{align*}
I_{FY}(M) &= \left(y_{F \cup e}y_{F' \cup e} \mid F, F' \; \text{incomparable}\;\right) + (y_iy_{F \cup e} \mid i \in E, i \notin F) \\
&\qquad \qquad + (\textstyle y_i + \sum_{i \in F}\, y_{F \cup e}, \sum_F \, y_{F \cup e} \mid i \in E). 
\end{align*}
Setting $x_F = y_{F \cup e}$ and using the last linear form above to eliminate $x_E$ recovers the usual presentation of the augmented Chow ring of $M$.

\begin{defn}
The \term{atom-free presentation} of the augmented Chow ring of $M$ is the ring
\[ \aChow_\af(M) = \QQ[x_F \mid F \in \LL \setminus \{\emptyset\} ]/I_\af(M), \]
where
\begin{align*} 
I_\af(M) 
&= (x_Fx_{F'} \mid F, F' \; \text{incomparable}\;)  + (\textstyle x_F \sum_{i \in F'}\, x_{F'} \mid i \in E, i \notin F ) + ((\textstyle \sum_{i \in F'} \, x_{F'})^2 \mid i \in E)
\end{align*}
\end{defn}

After a change variables $\phi$ on the ring $\QQ[y_i, x_F \mid i \in E, F \in \LL]$ sending $y_i \mapsto y_i - \sum_{i \in F} x_F$ for each $i \in E$ and sending $x_\emptyset \mapsto x_\emptyset - \sum_{F \neq \emptyset} x_F$, we see that 
\begin{align} 
\begin{split} \label{augmented:change:of:variables}
\phi(I_{FY}(M)) &= \left(x_\emptyset, y_i \mid i \in E) + (x_Fx_{F'} \mid F, F' \in \LL \setminus \{\emptyset\} \; \text{incomparable}\;\right)  \\ 
&+ (\textstyle x_F\sum_{i \in F'} \, x_{F'} \mid F \in \LL \setminus \{\emptyset\}, i \in E, i \notin F ) \\
&+ (\textstyle (\sum_{i \in F'} \, x_{F'})(\sum_{F \neq \emptyset} \,x_F) \mid i \in E) 
\\
&= (x_\emptyset, y_i \mid i \in E) + I_\af(M),
\end{split}
\end{align}
where the last equality follows after a suitable change of generators from the fact that 
\[
\textstyle (\sum_{i \in F'} \, x_{F'})(\sum_{F \neq \emptyset} \,x_F) = (\sum_{i \in F'} \, x_{F'})^2 + \sum_{F \neq \emptyset, i \notin F} x_F(\sum_{i \in F'} \, x_{F'}). 
\] 
And so, we see that 
\[ 
\aChow(M) \iso \aChow_{FY}(M) \iso \frac{\QQ[y_i, x_F \mid i \in E, F \in \LL]}{(x_\emptyset, y_i \mid i \in E) + I_\af(M)} \iso \aChow_\af(M).
\]

\begin{thm}[{\cite[Theorem 2]{FY04}}]
With respect to any lexicographic order $>$ such that $x_F > x_G$ implies $F \nsupseteq G$ and $y_i > x_F$ for all $i \in E$ and all flats $F$, the defining ideal $I_{FY}(M)$ of the augmented Chow ring of $M$ has a Gr\"obner basis consisting of the following polynomials for all $i \in E$ and $F, F' \in \LL$:
\begin{align} 
&x_{F'}x_F & F, F' \text{incomparable} \\
& y_ix_F & i \notin F \\
&x_{F'}(\textstyle \sum_{G \supseteq F}\, x_G)^{\rk F - \rk F'}& F' \subsetneq F \\
&y_i(\textstyle \sum_{G \supseteq F} \, x_G)^{\rk F}& i \in F \\
&(\textstyle \sum_{G \supseteq F} \, x_G)^{\rk F + 1}
\\
& \textstyle y_i + \sum_{i \in F} \, x_F & i \in E
\end{align}
\end{thm}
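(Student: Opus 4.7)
The plan is to deduce this theorem as a direct consequence of the general Feichtner-Yuzvinsky Gr\"obner basis theorem \cite[Theorem~2]{FY04} applied to the Chow ring $D(\LL((M^*+e)^*), \mathcal{G}_\aug)$, which was identified with $\aChow_{FY}(M)$ in the discussion preceding the statement. The work is then primarily one of translation: rewriting the FY Gr\"obner basis, whose generators are naturally indexed by elements and pairs of elements of the building set $\mathcal{G}_\aug$, in terms of the variables $y_i$ (for $i \in E$) and $x_F = y_{F \cup e}$ (for $F \in \LL(M)$).

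The key numerical input is the rank formula for the free coextension displayed above. Because $M$ is simple, each atom $\{i\} \in \mathcal{G}_\aug$ has rank $1$ in $\LL((M^*+e)^*)$, while each $F \cup e \in \mathcal{G}_\aug$ has rank $\rk_M F + 1$. This is the source of all the exponent computations below; in particular, the ``$+1$'' in relation $(5)$ relative to Theorem~\ref{FYgb} is exactly the rank shift of flat-type elements in the free coextension, and the absence of a ``$+1$'' in $(4)$ comes from the cancellation $\rk(F \cup e) - \rk\{i\} = \rk_M F$.

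I would then verify the families of relations in turn. The non-nested monomial relations produce $(1)$ and $(2)$: an incomparable pair $\{F, F'\}$ of flats of $M$ and a pair $\{\{i\}, F \cup e\}$ with $i \notin F$ both have joins of the form $F_0 \cup e \in \mathcal{G}_\aug$ and are therefore not nested; however, a pair $\{\{i\}, \{j\}\}$ with $i \neq j$ \emph{is} nested because its join is the independent set $\{i, j\}$, which is not of the form $F \cup e$ and hence not in $\mathcal{G}_\aug$, explaining why no $y_i y_j$ relation appears. The rank-power relations produce $(3)$, $(4)$, and $(5)$ as above, with the translation of exponents dictated by the rank formula. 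Finally, the atom relation at $\{i\} \in \at(\LL((M^*+e)^*))$ is exactly $(6)$, while the atom relation at the extra atom $\{e\}$ is the linear form $\sum_F x_F$, which is used to eliminate $x_E$ when passing to the standard presentation of $\aChow(M)$ and therefore does not appear in the statement.

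The only genuine obstacle is this rank bookkeeping, together with the routine compatibility check that the prescribed lex order (with $x_F > x_G$ whenever $F \nsupseteq G$, and $y_i > x_F$ for all $i$ and $F$) refines the partial order on $\mathcal{G}_\aug$ in the way required by the hypothesis of \cite[Theorem~2]{FY04}, since the atoms $\{i\}$ sit at the bottom of $\mathcal{G}_\aug$ and are given the largest variables. Once those checks are made, the six families of polynomials listed are precisely the image of the FY Gr\"obner basis under the variable translation, and so they form a Gr\"obner basis for $I_{FY}(M)$.
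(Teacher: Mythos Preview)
Your approach is exactly the paper's: the theorem is stated there with the bare citation \cite[Theorem~2]{FY04} and no proof, relying on the identification $\aChow_{FY}(M) \cong D(\LL((M^*+e)^*),\mathcal{G}_\aug)$ established just before. You have correctly filled in the translation that the paper leaves implicit, including the rank bookkeeping and the nested-set analysis explaining why no $y_iy_j$ relation arises.

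One small correction: the atom relation at $\{e\}$ \emph{does} appear in the statement, as the $F=\emptyset$ case of family~(5), since $\rk_M\emptyset+1=1$ and $\sum_{G\supseteq\emptyset}x_G=\sum_{G\in\LL}x_G$. So rather than being absent, it is absorbed into~(5); this only makes your final claim cleaner, since then the six families really are the full image of the FY Gr\"obner basis under the variable translation, with nothing omitted.
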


A slight modification yields a Gr\"obner basis in the atom-free setting.  We omit the proof since it is completely analogous to the proof of Corollary \ref{atom-free:Gröbner:basis}.

\begin{cor} \label{augmented:atom-free:Gröbner:basis}
With respect to any lexicographic order $>$ such that $x_F > x_G$ implies $F \nsupseteq G$, the defining ideal $I_\af(M)$ of the augmented Chow ring of $M$ has a Gr\"obner basis consisting of the following polynomials for all $F, F' \in \LL \setminus \{\emptyset\}$:
\begin{align} 
&x_Fx_{F'}& F, F' \; \text{incomparable} \\
&x_{F'}(\textstyle \sum_{G \supseteq F} \, x_G)^{\rk F - \rk F'}& F' \subsetneq F \\
&(\textstyle \sum_{G \supseteq F} \, x_G)^{\rk F + 1} &
\end{align}
\end{cor}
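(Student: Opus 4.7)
The plan is to follow the strategy of Corollary~\ref{atom-free:Gröbner:basis} verbatim, with the variables $y_i$ and $x_\emptyset$ playing the roles that the $x_i$ for $i \in E$ played in the non-augmented setting. First I would extend the given lexicographic order on $\{x_F \mid F \in \LL \setminus \{\emptyset\}\}$ to a lex order on the full polynomial ring $\QQ[y_i, x_F \mid i \in E, F \in \LL]$ by declaring $y_i > x_\emptyset > x_F$ for every $i \in E$ and every nonempty flat $F$.  (The condition that $x_F > x_G$ implies $F \nsupseteq G$ already forces $x_\emptyset$ to be the largest of the $x$-variables, so this extension is natural.)  This extended order satisfies the hypotheses of the preceding Feichtner--Yuzvinsky theorem, giving the explicit Gr\"obner basis of $I_{FY}(M)$ displayed just above.

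Next I would transport this Gr\"obner basis through the change-of-variables automorphism $\phi$ defined in \eqref{augmented:change:of:variables}.  Since $\phi$ fixes $x_F$ for $F \neq \emptyset$, while $\phi(y_i) = y_i - \sum_{i \in F} x_F$ and $\phi(x_\emptyset) = x_\emptyset - \sum_{F \neq \emptyset} x_F$, the choice of order yields $\init_> \phi(y_i) = y_i$ and $\init_> \phi(x_\emptyset) = x_\emptyset$, so $\init_> \phi(f) = \init_> f$ for every polynomial $f$.  Consequently the $\phi$-images of the FY Gr\"obner basis form a Gr\"obner basis for $\phi(I_{FY}(M)) = (y_i, x_\emptyset \mid i \in E) + I_\af(M)$.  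Inspection of the six FY families shows that the three listed in the corollary appear unchanged among the $\phi$-images, while the remaining three families produce Gr\"obner basis elements with $y_i$ or $x_\emptyset$ in their leading terms.

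Finally, since this order is a pure lex elimination order for the variables $\{y_i, x_\emptyset\}$, by \cite[3.3]{EH12} the Gr\"obner basis elements lying in $\QQ[x_F \mid F \in \LL \setminus \{\emptyset\}]$ form a Gr\"obner basis for the elimination ideal $\phi(I_{FY}(M)) \cap \QQ[x_F \mid F \in \LL \setminus \{\emptyset\}]$.  In a pure lex order, any polynomial whose leading monomial avoids the eliminated variables must itself lie in the smaller ring, so the surviving Gr\"obner basis consists precisely of the three families in the corollary statement.  To close the argument, one notes via the algebra retract sending $y_i \mapsto 0$ and $x_\emptyset \mapsto 0$ that this elimination ideal coincides with $I_\af(M)$, exactly as in the proof of Corollary~\ref{atom-free:Gröbner:basis}.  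The only additional bookkeeping beyond the non-augmented case is tracking of the variable $x_\emptyset$ alongside the $y_i$; no new obstacles arise.
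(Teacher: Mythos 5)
Your proposal is correct and is exactly the argument the paper intends: the paper omits this proof as ``completely analogous'' to Corollary~\ref{atom-free:Gröbner:basis}, and you carry out that analogy faithfully (extend the order with $y_i > x_\emptyset > x_F$, transport the Feichtner--Yuzvinsky basis through $\phi$ noting initial terms are preserved, then eliminate $y_i$ and $x_\emptyset$). The only tiny bookkeeping slip is that the discarded elements are not just the images of the ``remaining three families'': the $F'=\emptyset$ and $F=\emptyset$ instances of the second and third families also get eliminated, but since their leading terms involve $x_\emptyset$ this does not affect the conclusion.
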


\begin{cor} \label{augmented:monomial:basis}
The ring $\aChow_\af(M)$ has a $\QQ$-basis consisting of all monomials
\[ x_F^\alpha = x_{F_1}^{\alpha_1} \cdots x_{F_r}^{\alpha_r} \]
for all chains of flats $F = \{F_1 \supsetneq F_2 \supsetneq \cdots \supsetneq F_r \supsetneq F_{r+1} = \emptyset \}$ for some $r \geq 0$ and all $\alpha \in \ZZ_{\geq 0}^{r+1}$ such that $\sum_i \alpha_i = \rk F_1 + 1$, $1 \leq \alpha_i < \rk F_i - \rk F_{i + 1}$ if $i < r$, and $\alpha_r \leq \rk F_r$.
\end{cor}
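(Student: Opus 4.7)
The plan is to apply Macaulay's theorem to the Gr\"obner basis of Corollary~\ref{augmented:atom-free:Gröbner:basis}: since that theorem identifies a $\QQ$-basis of $\aChow_\af(M)$ with the standard monomials---those not lying in $\init_>(I_\af(M))$---it suffices to show that the standard monomials are precisely the nested monomials described in the statement.

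First, I would compute the initial terms. The chosen lex order satisfies $x_F > x_G$ whenever $F \nsupseteq G$, or equivalently $x_F \leq x_G$ whenever $F \supseteq G$, so $x_F$ is the largest variable appearing in each sum $\sum_{G \supseteq F} x_G$. Consequently, the initial terms of the three Gr\"obner basis families are $x_Fx_{F'}$ for incomparable $F, F'$, $x_{F'}x_F^{\rk F - \rk F'}$ for $F' \subsetneq F$, and $x_F^{\rk F + 1}$.

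Next, I would characterize the standard monomials combinatorially. A monomial $x_{F_1}^{\alpha_1}\cdots x_{F_r}^{\alpha_r}$ with each $\alpha_i \geq 1$ avoids the first family iff $\{F_1, \ldots, F_r\}$ forms a chain, which we order as $F_1 \supsetneq \cdots \supsetneq F_r$; it avoids the second family iff $\alpha_i < \rk F_i - \rk F_{i+1}$ for each $i < r$; and it avoids the third family iff $\alpha_i \leq \rk F_i$ for each $i$. The third condition follows automatically for $i < r$ from the preceding strict bound, so it contributes only the end-of-chain condition $\alpha_r \leq \rk F_r$.

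Finally, I would recast this description in the $(r+1)$-tuple formalism of the statement. Given such a standard monomial, the auxiliary coordinate
\[
\alpha_{r+1} := \rk F_1 + 1 - \sum_{i=1}^r \alpha_i
\]
lies in $\ZZ_{\geq 0}$, since
\[
\sum_{i=1}^r \alpha_i \leq \sum_{i=1}^{r-1}(\rk F_i - \rk F_{i+1} - 1) + \rk F_r = \rk F_1 + 1 - r.
\]
Forgetting $\alpha_{r+1}$ reverses the correspondence, and the edge case $r = 0$ produces the monomial $1$. I do not foresee any serious obstacle; the main item requiring care is the notational bookkeeping for the dummy coordinate $\alpha_{r+1}$ and the implicit convention $\alpha_r \geq 1$ (when $r \geq 1$) used to avoid double-counting monomials associated to shorter chains.
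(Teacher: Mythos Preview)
Your proposal is correct and follows precisely the approach the paper indicates: the corollary is left unproved as an immediate consequence of the Gr\"obner basis in Corollary~\ref{augmented:atom-free:Gröbner:basis} together with Macaulay's theorem (just as Corollary~\ref{monomial:basis} is deduced from \eqref{Chow:ring:initial:ideal}), and your argument spells out exactly this deduction. One small slip worth fixing: the order condition is that $x_F > x_G$ \emph{implies} $F \nsupseteq G$, not the converse you first wrote---but your ``equivalently'' reformulation $F \supseteq G \Rightarrow x_F \leq x_G$ is the correct contrapositive, and all subsequent reasoning is sound.
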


\noindent As before, we call the monomials of Corollary \ref{augmented:monomial:basis} the \term{nested monomials} of $\aChow_{\mathrm{af}}(M)$.

\subsection{A Koszul filtration for the augmented Chow ring of a matroid}\label{SSfilt2}

When $M$ is a matroid of rank 1, we note that $\aChow(M) \iso \QQ[x_E]/(x_E^2)$, which we have previously noted has a Koszul filtration.  We may, therefore, assume the following without any loss of generality.

\begin{notation}
Throughout the remainder of this section, $M$ denotes a simple matroid with $\rk M \geq 2$ and $A = \aChow(M)$ with respect to the atom-free presentation.
\end{notation}

\begin{rmk} \label{augmented:socle:generator}
As a consequence of Corollary \ref{augmented:atom-free:Gröbner:basis}, we note that $I_{\mathrm{af}}(M)$ contains the polynomials
\begin{equation} \label{augmented:hyperplane:relations}
x_Hx_E, \qquad x_Fx_H^{\rk H - \rk F} + x_Fx_E^{\rk M - \rk F - 1}, \qquad x_H^{\rk H +1} + x_E^{\rk M}
\end{equation}
for any $F \in \LL \setminus \{\emptyset\}$ and hyperplane $H \supsetneq F$.  In particular, we note that the last type of relation is the primary difference when compared with the ordinary Chow ring of $M$.  Since $x_Fx_E^{\rk M} = 0$ in $A$ for all $F \in \LL \setminus \{\emptyset\}$ and $A$ is an Artinian Gorenstein ring, it follows that $x_E^{\rk M}$ is a socle generator of $A$.
\end{rmk}

The proofs of the following results are completely analogous to those of Section \ref{Smain} aside from some minor modifications based on the above remark.

\begin{lemma} \label{augmented:hyperplane:ideal:basis}
Let $\mathcal{H}$ be a set of hyperplanes of $M$.  Then the ideal $(x_H \mid H \in \mathcal{H})$ in $A = \aChow(M)$ has a $\QQ$-basis consisting of all nested monomials $x_F^\alpha$ with $F = \{F_1 \supsetneq F_2 \supsetneq \cdots \supsetneq F_r \supsetneq F_{r+1} = \emptyset \}$ for some $r \geq 0$ such that either: 
\begin{enumerate}[label = \textnormal{(\roman*)}]
\item $F_1 \in \mathcal{H}$, or 
\item $F_1 = E$, $H \supsetneq F_2 \neq \emptyset$ for some $H \in \mathcal{H}$ and $\alpha_1 = \rk M - \rk F_2 - 1$, or 
\item $F_1 = E$, $F_2 = \emptyset$, and $\alpha_1 = \rk M$.
\end{enumerate}
\end{lemma}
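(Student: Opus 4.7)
The proof will mirror the strategy of Lemma~\ref{hyperplane:ideal:basis}, with modifications dictated by the augmented relations \eqref{augmented:hyperplane:relations} and the fact, noted in Remark~\ref{augmented:socle:generator}, that the socle generator of $\aChow(M)$ is $x_E^{\rk M}$ rather than $x_E^{\rk M - 1}$. As in the non-augmented case, I will first reduce to the principal case $\mathcal{H} = \{H\}$, since any element of $(x_H \mid H \in \mathcal{H})$ is a sum of elements of the principal ideals $(x_H)$.

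For the forward inclusion, I will verify that every monomial of type (i)--(iii) lies in $(x_H)$. Type (i) is immediate because $x_H$ divides $x_F^\alpha$. For type (ii), the augmented relations give $x_E^{\rk M - \rk F_2 - 1}x_{F_2} = -x_H^{\rk H - \rk F_2}x_{F_2}$, which exhibits the monomial as lying in $(x_H)$. For type (iii), the relation $x_H^{\rk H + 1} + x_E^{\rk M} = 0$ of \eqref{augmented:hyperplane:relations} --- whose degree is exactly one higher than its counterpart in the ordinary Chow ring --- gives $x_E^{\rk M} = -x_H^{\rk H + 1} \in (x_H)$.

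For the reverse inclusion, I will take $f \in (x_H)$, write $f = x_H g$, and expand $g = \sum c_{F,\alpha} x_F^\alpha$ in the nested-monomial basis of $\aChow_\af(M)$ provided by Corollary~\ref{augmented:monomial:basis}. The goal is to show that for each nested monomial $x_F^\alpha$ in the support of $g$, the product $x_H \cdot x_F^\alpha$ is either zero or a scalar multiple of a nested monomial of one of the three listed types, so that $f$ itself is a $\QQ$-linear combination of such monomials. Mirroring Lemma~\ref{hyperplane:ideal:basis}: if $F_1$ is incomparable with $H$ or $F_1 = E$, then $x_H x_F^\alpha = 0$ by the incomparability relations or by $x_H x_E = 0$; if $F_1 \subsetneq H$, prepending $H$ to the chain produces a type (i) monomial, except when $\rk H - \rk F_1 = 1$, in which case the identity $x_H x_{F_1} = -x_E x_{F_1}$ (the middle relation of \eqref{augmented:hyperplane:relations} with the appropriate exponent) produces a type (ii) monomial with new leading exponent $1 = \rk M - \rk F_1 - 1$; if $F_1 = H$ and $r \geq 2$, then $x_H^{\alpha_1 + 1}x_{F_2}^{\alpha_2}\cdots$ is either a type (i) monomial or, at the boundary $\alpha_1 + 1 = \rk H - \rk F_2$, can be rewritten via $x_H^{\rk H - \rk F_2}x_{F_2} = -x_E^{\rk M - \rk F_2 - 1}x_{F_2}$ as a type (ii) monomial.

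The one place where the argument must genuinely diverge from the ordinary Chow-ring case is the boundary situation when $F_1 = H$ and $r = 1$: the augmented nested-monomial condition allows $\alpha_1 \leq \rk H$ rather than $\alpha_1 < \rk H$, so the extremal value $\alpha_1 + 1 = \rk H + 1$ now arises and must be handled by the new relation $x_H^{\rk H + 1} = -x_E^{\rk M}$, producing precisely a type (iii) monomial. This additional boundary case, together with consistently tracking which of types (i), (ii), or (iii) each product falls into, is the main bookkeeping burden; I expect no new conceptual obstacle beyond what was already handled in Lemma~\ref{hyperplane:ideal:basis}.
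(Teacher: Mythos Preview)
Your proposal is correct and matches precisely what the paper intends: the paper gives no separate proof of this lemma, stating only that the argument is ``completely analogous'' to Lemma~\ref{hyperplane:ideal:basis} with the modifications coming from Remark~\ref{augmented:socle:generator}, and your write-up carries out exactly that analogy. In particular, you have correctly identified the one substantive divergence --- the boundary case $F_1 = H$, $r = 1$, $\alpha_1 = \rk H$ permitted by the augmented nested-monomial condition $\alpha_r \leq \rk F_r$ --- and handled it via $x_H^{\rk H + 1} = -x_E^{\rk M}$ to produce the type~(iii) monomial.
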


\begin{prop} \label{augmented:truncation}
For any matroid $M$, we have $(0 : x_E) = (x_H \mid H \in \coat(E))$ and $A/(0 : x_E) \iso \aChow(T(M))$. 
\end{prop}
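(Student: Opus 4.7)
The plan is to mimic the proof of Proposition \ref{truncation} for the ordinary Chow ring, modifying it to accommodate the augmented monomial basis of Corollary \ref{augmented:monomial:basis} and the augmented hyperplane relations \eqref{augmented:hyperplane:relations}. The key structural changes are the shift of the socle generator to $x_E^{\rk M}$ (Remark \ref{augmented:socle:generator}) and the allowance of the slightly larger exponent $\alpha_r \leq \rk F_r$ at the tail of a nested chain.

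First, the containment $(x_H \mid H \in \coat(E)) \subseteq (0 : x_E)$ follows immediately from the relation $x_Hx_E = 0$ in \eqref{augmented:hyperplane:relations}. For the reverse, take $f \in A$ with $x_E f = 0$ and expand $f$ as a $\QQ$-linear combination of nested monomials $x_F^\alpha$. A direct inspection using Corollary \ref{augmented:monomial:basis} shows that $x_E \cdot x_F^\alpha = 0$ precisely when $F_1 \in \coat(E)$, when $F_1 = E$ with $F_2 \neq \emptyset$ and $\alpha_1 = \rk M - \rk F_2 - 1$, or when $r = 1$, $F_1 = E$, and $\alpha_1 = \rk M$ (the new boundary case introduced by the augmented basis). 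For every other nested monomial, $x_E \cdot x_F^\alpha$ is a nonzero scalar multiple of another nested monomial, and distinct non-exceptional monomials are sent to scalar multiples of distinct nested monomials. Hence $x_E f = 0$ forces the coefficient of every non-exceptional monomial in $f$ to vanish.

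It remains to show each exceptional monomial lies in the hyperplane ideal. Monomials with $F_1 \in \coat(E)$ are immediate. For a monomial with $F_1 = E$, $F_2 \neq \emptyset$, and $\alpha_1 = \rk M - \rk F_2 - 1$, pick any hyperplane $H \supsetneq F_2$ (which exists since $\rk F_2 \leq \rk M - 2$ in the graded lattice $\LL$) and apply the relation $x_{F_2}x_E^{\rk M - \rk F_2 - 1} = -x_{F_2}x_H^{\rk H - \rk F_2}$ from \eqref{augmented:hyperplane:relations}. For the socle monomial $x_E^{\rk M}$ (the genuinely new case), use the augmented-specific relation $x_H^{\rk H + 1} + x_E^{\rk M} = 0$, giving $x_E^{\rk M} = -x_H^{\rk H + 1} \in (x_H)$ for any hyperplane $H$.

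For the isomorphism $A/(0 : x_E) \iso \aChow(T(M))$, define the $\QQ$-algebra surjection $\pi: A \to \aChow(T(M))$ sending $x_F \mapsto x_F$ when $F \notin \coat(E)$ and $x_F \mapsto 0$ otherwise; well-definedness is a routine check against the defining relations of the atom-free presentation of $\aChow(T(M))$. The containment $(x_H \mid H \in \coat(E)) \subseteq \ker \pi$ is clear, so it suffices to prove the reverse. Expanding any $f \in \ker \pi$ in the nested basis, the equation $\pi(f) = 0$ forces the coefficients of all monomials $x_F^\alpha$ that remain nested in $\aChow(T(M))$ to vanish, leaving $f$ supported on monomials with $F_1 \in \coat(E)$ and on monomials with $F_1 = E$ whose image fails to be nested in $\aChow(T(M))$ (because $F_2$ may be a hyperplane of $T(M)$, where the monomial basis has a tighter $\alpha_1$ bound); as before, these latter monomials can be rewritten as elements of $(x_H \mid H \in \coat(E))$ via \eqref{augmented:hyperplane:relations}. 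The main obstacle is careful bookkeeping of the new $r = 1$, $\alpha_1 = \rk M$ boundary case, which has no analog in Proposition \ref{truncation} and must be absorbed into the hyperplane ideal using the augmented relation on $x_H^{\rk H + 1}$; the remaining computations are essentially verbatim translations of the Chow-ring argument.
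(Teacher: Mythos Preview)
Your proposal is correct and follows essentially the same approach as the paper, which simply states that the proof is completely analogous to that of Proposition~\ref{truncation} with minor modifications based on Remark~\ref{augmented:socle:generator}. You have correctly identified and handled the one genuinely new ingredient---the boundary case $r = 1$, $F_1 = E$, $\alpha_1 = \rk M$ arising from the shifted socle generator---via the augmented relation $x_E^{\rk M} = -x_H^{\rk H + 1}$.
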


\begin{cor} \label{augmented:colons:of:arbitrary:flats}
Let $F$ be a flat of $M$ with $\rk F \geq 1$.
\begin{enumerate}[label = \textnormal{(\alph*)}]
\item \label{augmented:kernel:of:restriction} The kernel of the natural surjective homomorphism $\pi_{M,F}: A \to \aChow(M \vert F)$ sending $x_G \mapsto x_G$ if $G \subseteq F$ and $x_G \mapsto 0$ otherwise is $(x_G \mid G \nsubseteq F)$.
\item \label{augmented:colon:with:filter} Suppose that $\mathcal{G}$ is a set of flats of $M$ such that $(F, E] \subseteq \mathcal{G}$ and $\mathcal{G} \cap [\emptyset, F] = \emptyset$. If $\rk F \geq 2$, then
\[ 
(x_G \mid G \in \mathcal{G}) : x_F = (x_G \mid G \nsubseteq F) + (x_G \mid G \in \coat(F)). 
\]
Otherwise, $(x_G \mid G \in \mathcal{G}) : x_F = A_+$.
\end{enumerate}
\end{cor}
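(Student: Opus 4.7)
The plan is to follow the proof of Corollary \ref{colons:of:arbitrary:flats} from Section \ref{Smain} essentially line by line, with one adjustment: in the augmented setting the socle of $\aChow(M)$ is generated by $x_E^{\rk M}$ rather than $x_E^{\rk M - 1}$ (see Remark \ref{augmented:socle:generator}), so the rank threshold $\rk F \geq 3$ becomes $\rk F \geq 2$.

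For part \ref{augmented:kernel:of:restriction}, I would first verify that the assignment $x_G \mapsto x_G$ if $G \subseteq F$ and $x_G \mapsto 0$ otherwise descends to a well-defined surjective $\QQ$-algebra map $\pi_{M,F}$ by checking it sends each family of generators of $I_\af(M)$ to the corresponding relations of $\aChow_\af(M\vert F)$. The containment $(x_G \mid G \nsubseteq F) \subseteq \ker \pi_{M,F}$ is then immediate. For the reverse inclusion, I would expand any $f \in \ker \pi_{M,F}$ in the nested monomial basis of $A$ from Corollary \ref{augmented:monomial:basis}: since $\pi_{M,F}$ carries each nested monomial $x_G^\alpha$ with $G_1 \subseteq F$ to a distinct nested monomial of $\aChow(M\vert F)$ and annihilates all others, the vanishing of $\pi_{M,F}(f)$ forces the coefficients indexed by chains with $G_1 \subseteq F$ to be zero, placing $f$ in $(x_G \mid G \nsubseteq F)$.

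For part \ref{augmented:colon:with:filter}, I would first verify the containment $(x_G \mid G \nsubseteq F) + (x_G \mid G \in \coat(F)) \subseteq (x_G \mid G \in \mathcal{G}) : x_F$. If $G \nsubseteq F$, then either $F \subsetneq G$, whence $G \in (F, E] \subseteq \mathcal{G}$, or $F$ and $G$ are incomparable, so $x_Gx_F = 0$. If $G \in \coat(F)$, the Gröbner basis relation $x_G\left(\sum_{G' \supseteq F} x_{G'}\right) = 0$ from Corollary \ref{augmented:atom-free:Gröbner:basis} (with $F' = G$, noting $\rk F - \rk G = 1$) gives $x_Gx_F = -\sum_{G' \supsetneq F} x_Gx_{G'}$, and each summand lies in $(x_G \mid G \in \mathcal{G})$ since $(F,E] \subseteq \mathcal{G}$. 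For the reverse inclusion, since both ideals contain $(x_G \mid G \nsubseteq F)$, part \ref{augmented:kernel:of:restriction} lets me reduce to comparing their images in $\aChow(M\vert F)$. Because $\mathcal{G} \cap [\emptyset, F] = \emptyset$, the image of $(x_G \mid G \in \mathcal{G})$ vanishes there, so the image of the colon collapses to $(0 : \bar{x}_F)$, where $\bar{x}_F$ plays the role of the top variable of $\aChow(M\vert F)$. When $\rk F \geq 2$, Proposition \ref{augmented:truncation} applied to $M\vert F$ identifies this annihilator as $(x_G \mid G \in \coat(F))$; when $\rk F = 1$, the restriction $M\vert F$ is rank one and $\aChow(M\vert F) \iso \QQ[\bar{x}_F]/(\bar{x}_F^2)$, whose annihilator of $\bar{x}_F$ is the full maximal ideal, lifting to $A_+$ in $A$.

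I do not expect a serious obstacle: the proof is essentially a direct translation of its Section \ref{Smain} counterpart. The only genuine care is in the well-definedness check for $\pi_{M,F}$ against the augmented atom-free relations and in tracking how the extra nonzero power $x_E^{\rk M}$ in the augmented setting accounts for the shift in the rank hypothesis from $\rk F \geq 3$ to $\rk F \geq 2$.
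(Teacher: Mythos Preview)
Your proposal is correct and follows essentially the same approach as the paper, which explicitly says that the proofs in the augmented section are ``completely analogous to those of Section~\ref{Smain} aside from some minor modifications'' based on Remark~\ref{augmented:socle:generator}. Your only elaboration beyond the paper's proof of Corollary~\ref{colons:of:arbitrary:flats} is that you verify the containment $(x_G \mid G \in \coat(F)) \subseteq (x_G \mid G \in \mathcal{G}) : x_F$ directly via the Gr\"obner relation $x_G\bigl(\sum_{G' \supseteq F} x_{G'}\bigr) = 0$ before passing to $\aChow(M\vert F)$, whereas the paper absorbs this into the reduction ``it suffices to assume $F = E$''; both routes are valid and amount to the same computation.
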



\begin{prop} \label{augmented:colon:computations}
Let $\mathcal{H}$ be a nonempty set of hyperplanes of $M$ and $H'$ be any hyperplane.  Then:
\begin{enumerate}[label = \textnormal{(\alph*)}]

\item  \label{augmented:restriction}
$(0 : x_{H'}) = (x_F \mid F \nsubseteq H' )$ and $A/(0: x_{H'}) \iso \aChow(M\vert H')$.

\item \label{augmented:annihilator:of:hyperplane:ideal}
$(0 : (x_H \mid H \in \mathcal{H})) = (x_F \mid F \nsubseteq H \;\text{for all}\; H \in \mathcal{H})$.

\item \label{augmented:hyperplane:colons}
Suppose that $H' \notin \mathcal{H}$.  If $\rk H' \geq 2$ and for every $H \in \mathcal{H}$ there exists an $F \in \coat_\mathcal{H}(H')$ such that $H \cap H' \subseteq F$, then
\[
(x_H \mid H \in \mathcal{H}) : x_{H'} =  (0 : x_{H'}) + (x_F \mid F \in \coat_\mathcal{H}(H'))
\]
Otherwise, if $\rk H' = 1$, then $(x_H \mid H \in \mathcal{H}) : x_{H'} = A_+$. 

\end{enumerate}
\end{prop}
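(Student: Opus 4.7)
The plan is to mirror the proof of Proposition \ref{colon:computations} in the augmented setting, using Lemma \ref{augmented:hyperplane:ideal:basis} and the modified socle structure from Remark \ref{augmented:socle:generator}, and then flagging the handful of places where the augmented relations \eqref{augmented:hyperplane:relations} and the relaxed exponent bound $\alpha_r \leq \rk F_r$ (rather than $\alpha_r < \rk F_r$) in Corollary \ref{augmented:monomial:basis} force extra bookkeeping.

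For part \ref{augmented:restriction}, I would argue exactly as in the ordinary case: for each nested monomial $x_F^\alpha$ in $A$, inspecting the Gr\"obner basis of Corollary \ref{augmented:atom-free:Gröbner:basis} shows that $x_{H'} \cdot x_F^\alpha = 0$ when $F_1 \nsubseteq H'$, and otherwise $x_{H'} \cdot x_F^\alpha$ is a scalar multiple of a nested monomial, and distinct inputs go to scalar multiples of distinct nested monomials. Writing any $f$ in the kernel as a $\QQ$-linear combination of nested monomials and imposing $x_{H'} f = 0$ forces every surviving coefficient to vanish, giving the claimed equality. The isomorphism $A/(0 : x_{H'}) \iso \aChow(M\vert H')$ is then immediate from Corollary \ref{augmented:colons:of:arbitrary:flats}\ref{augmented:kernel:of:restriction} applied to $F = H'$.

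Part \ref{augmented:annihilator:of:hyperplane:ideal} is then a formal consequence: consider the map $\eta: A \to \bigoplus_{H \in \mathcal{H}} \aChow(M\vert H)$ assembled from the restriction maps of part \ref{augmented:restriction}. Its kernel is $\bigcap_H (0 : x_H) = (0 : (x_H \mid H \in \mathcal{H}))$, and by the expansion-in-nested-monomials argument from part \ref{augmented:restriction} an element lies in $\ker \eta$ if and only if all nested monomials in its expansion have $F_1 \nsubseteq H$ for every $H \in \mathcal{H}$.

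Part \ref{augmented:hyperplane:colons} is where the work actually lies, and I expect it to be the main obstacle because the augmented nested monomial basis allows $\alpha_r = \rk F_r$, so the expansion of $x_{H'} f$ produces an extra family of terms (coming from $F_1 = H'$, $F_2 = \emptyset$, $\alpha_1 = \rk H'$) that must be absorbed using $x_{H'}^{\rk H' + 1} + x_E^{\rk M} = 0$ from Remark \ref{augmented:socle:generator}. For $\rk H' = 1$ the claim is immediate from $x_{H'} x_F = 0$ for $F \neq H'$ together with $x_{H'}^2 = -x_E^2 = x_H^2$. For $\rk H' \geq 2$, I would take $f = \sum c_{F,\alpha} x_F^\alpha$, expand $x_{H'} f$ via Corollary \ref{augmented:atom-free:Gröbner:basis} and \eqref{augmented:hyperplane:relations}, and split the result into the four analogous sums as in the original proof plus an additional sum coming from the case $F_2 = \emptyset$, $\alpha_1 = \rk H'$. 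Applying Lemma \ref{augmented:hyperplane:ideal:basis} to identify which coefficients must vanish, $f$ lands in $(0 : x_{H'})$ plus an ideal generated by terms of the form $x_F x_{H'}^{\rk H' - \rk F - 1}$ with $F \subseteq H \cap H'$ and $x_{H'}^{\rk H'}$. To see this is contained in $(0 : x_{H'}) + (x_F \mid F \in \coat_\mathcal{H}(H'))$, work modulo $(0 : x_{H'}) \iso \aChow(M\vert H')$ and use linkage in the Artinian Gorenstein ring $\aChow(M\vert H')$: by part \ref{augmented:annihilator:of:hyperplane:ideal} (applied inside $M\vert H'$) the annihilator of $J = (x_F \mid F \in \coat_\mathcal{H}(H'))$ is $(x_G \mid G \subseteq H', G \nsubseteq F' \text{ for all } F' \in \coat_\mathcal{H}(H'))$, and the hypothesis that every $H \in \mathcal{H}$ satisfies $H \cap H' \subseteq F'$ for some $F' \in \coat_\mathcal{H}(H')$ ensures each such $x_G$ annihilates the problematic generators; Remark \ref{augmented:socle:generator} handles the socle element $x_{H'}^{\rk H'}$. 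Thus $(0 : (0 : J)) = J$ finishes the containment.
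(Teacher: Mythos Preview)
Your proposal is correct and follows exactly the approach the paper intends: the paper does not give a separate proof of Proposition \ref{augmented:colon:computations} but simply declares that ``the proofs of the following results are completely analogous to those of Section \ref{Smain} aside from some minor modifications based on the above remark,'' and you have correctly identified those modifications (the relaxed exponent bound $\alpha_r \leq \rk F_r$, the socle generator $x_{H'}^{\rk H'}$ in place of $x_{H'}^{\rk H'-1}$, and the shifted rank threshold $\rk H' \geq 2$ versus $\rk H' \geq 3$).
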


\begin{thm} \label{augmented:Chow:rings:of:matroids:are:Koszul}
The augmented Chow ring of a matroid has a Koszul filtration.  In particular, such rings have rational Poincar\'e series.
\end{thm}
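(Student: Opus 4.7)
The plan is to mirror the proof of Theorem \ref{Chow:rings:of:matroids:are:Koszul} essentially verbatim, replacing each ingredient with its augmented analogue already developed in this section. Since a Koszul filtration forces Koszulness by \cite[Proposition 1.2]{CTV01}, and any Koszul algebra has rational Poincaré series via Fröberg's formula \eqref{PHS1}, the second sentence is immediate from the first.

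The reduction to $\rk M \geq 2$ has been handled in the notation preceding the statement. Fix a total coatom ordering $\prec$ on $\LL = \LL(M)$ provided by Theorem \ref{geometric:lattices:are:meet-shellable}, and define
\begin{align*}
\mathcal{F}_0 &= \{(x_G \mid G \in \mathcal{G}) \mid \mathcal{G} \subseteq \LL \setminus \{\emptyset\} \text{ an up-set}\},\\
\mathcal{F}_1 &= \{(x_G \mid G \nsubseteq F) + (x_G \mid G \in \mathcal{G}) \mid F \in \LL \setminus \{\emptyset\}, \mathcal{G} \text{ an initial segment covered by } F\},
\end{align*}
and set $\mathcal{F} = \mathcal{F}_0 \cup \mathcal{F}_1$. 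As before, $(0), A_+ \in \mathcal{F}_0$. The main difference from the ordinary Chow ring case is a systematic shift of rank thresholds by one: by Corollary \ref{augmented:colons:of:arbitrary:flats} and Proposition \ref{augmented:colon:computations}\ref{augmented:hyperplane:colons} the recursion bottoms out at rank $1$ rather than rank $2$, reflecting the fact that the augmented Chow ring treats atoms on the same footing as higher flats.

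I would then verify the axioms of a Koszul filtration for each nonzero $I \in \mathcal{F}$. If $I = (x_G \mid G \in \mathcal{G}) \in \mathcal{F}_0$, pick $G' \in \mathcal{G}$ minimal in the lattice order and set $J = (x_G \mid G \in \mathcal{G} \setminus \{G'\}) \in \mathcal{F}_0$; then $I/J$ is cyclic and Corollary \ref{augmented:colons:of:arbitrary:flats} computes $(J : I) = (J : x_{G'})$, which lies in $\mathcal{F}_1$ when $\rk G' \geq 2$ and equals $A_+ \in \mathcal{F}_0$ when $\rk G' = 1$. If $I = (x_G \mid G \nsubseteq F) + (x_G \mid G \in \mathcal{G}) \in \mathcal{F}_1$ with $\mathcal{G} \neq \emptyset$, let $G'$ be the $\prec$-largest element of $\mathcal{G}$ and set $J = (x_G \mid G \nsubseteq F) + (x_G \mid G \in \mathcal{G} \setminus \{G'\}) \in \mathcal{F}_1$; applying Proposition \ref{augmented:colon:computations}\ref{augmented:hyperplane:colons} inside $\aChow(M \vert F)$ gives
\[
(J : I) = (x_G \mid G \nsubseteq G') + (x_G \mid G \in \coat_\mathcal{G}(G')).
\]
Property \ref{good:restricted:covers} of a total coatom ordering ensures $\coat_\mathcal{G}(G')$ is an initial segment covered by $G'$, so $(J : I) \in \mathcal{F}_1$. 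The remaining case $\mathcal{G} = \emptyset$ reduces to the $\mathcal{F}_0$ argument above.

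The main obstacle is purely bookkeeping: one must confirm that the hypothesis of Proposition \ref{augmented:colon:computations}\ref{augmented:hyperplane:colons}—that every $H \in \mathcal{G}$ meets $G'$ inside some element of $\coat_\mathcal{G}(G')$—is supplied automatically by the total coatom order via property \ref{locally:rooted}, exactly as in the proof of Theorem \ref{Chow:rings:of:matroids:are:Koszul}. Once this is in hand, the argument closes and yields the Koszul filtration.
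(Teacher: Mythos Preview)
Your proposal is correct and matches the paper's intended approach exactly: the paper gives no explicit proof of Theorem \ref{augmented:Chow:rings:of:matroids:are:Koszul}, stating only that the results of Section~\ref{Saugmented} are ``completely analogous to those of Section~\ref{Smain} aside from some minor modifications,'' and your write-up is precisely that analogous argument with the correct rank-threshold shift and the replacement of $\LL_{\geq 2}$ by $\LL \setminus \{\emptyset\}$. The one place you might tighten the exposition is the $\mathcal{F}_1$ step when $\rk G' = 1$, where Proposition~\ref{augmented:colon:computations}\ref{augmented:hyperplane:colons} gives $(J:I) = A_+ \in \mathcal{F}_0$ rather than the displayed formula; but this is a cosmetic case split, not a gap.
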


It is tempting to seek a common framework for the Koszul property of Chow rings and augmented Chow rings of matroids.  The first natural approach is to consider atomic Chow rings of geometric lattices with respect to arbitrary building sets.  Examples~\ref{nonKoszul:building:set}--\ref{nonKoszulminimalbuildingset} show that there is no such result in this level of generality.

\section{Examples and questions}\label{Sexamples}

In this section, we collect some relevant examples and questions.  We begin with B{\o}gvad's algebra with irrational Poincar\'e series.

\begin{example}[\textbf{A quadratic Artinian Gorenstein algebra with irrational Poincar\'{e} series}]\label{EXirrational}

This example is due to B{\o}gvad \cite{Bog83} and is based on Anick's \cite{Ani82} construction;  however, it is not obvious that this construction leads to a quadratic algebra.   Let $\kk$ denote a field of characteristic $2$.  (Computation suggests that any field will do.)  Set
\[R = \kk[x_1,x_2,x_3,x_4,x_5]/(x_1^2,x_2^2,x_3^2,x_4^2,x_5^2,x_1x_2,x_4x_5,x_1x_3+x_3x_4+x_2x_5).\]
Anick proved that $R$ has irrational Poincar\'{e} series.  One can easily check that $R$ is superlevel in the language of Mastroeni, Schenck, and Stillman \cite{MSS21}.  Therefore, the idealization $\tilde{R} = R \ltimes \omega_R(-3)$ is quadratic and Gorenstein; here $\omega_R$ denotes the canonical module of $R$.  The Poincar\'{e} series of $\tilde{R}$ is rationally related to that of $R$ \cite{Gul72}, and so $\tilde{R}$ also has irrational Poincar\'{e} series.  (In both papers, the ideal $(x_1,x_2,x_3,x_4,x_5)^3$ is included among the relations but is redundant.)
\end{example}

One might hope for a more general result about the Koszul property of Chow rings of atomic lattices and arbitrary building sets.  Even for geometric lattices, the following examples show that this is not possible.

\begin{example}[\textbf{A geometric lattice with Gorenstein but not Koszul Chow ring}] \label{nonKoszul:building:set}
In general, the Chow rings $D(\LL, \mathcal{G})$ of Feichtner and Yuzvinsky need not be Koszul for all building sets $\mathcal{G}$ even when $\LL$ is a geometric lattice.  For example, suppose $\LL = B_3$ is the Boolean lattice on 3 elements, which is the lattice of flats of the uniform matroid $U_{3,3}$ on the ground set $\{1, 2, 3\}$.  Then $\mathcal{G} = \{\{1\}, \{2\}, \{3\}, \{1, 2, 3\}\}$ is easily checked to be a building set for $\LL$, but (after suppressing commas and braces in the variable indices) the Chow ring is
\[ 
D(\LL, \mathcal{G}) = \frac{\QQ[x_1, x_2, x_3, x_{123}]}{(x_1x_2x_3, x_1 + x_{123}, x_2 + x_{123}, x_3 + x_{123})} 
\iso
\frac{\QQ[x_{123}]}{(x_{123}^3)},
\]
which is evidently not Koszul.  Interestingly, one can check using Macaulay2 that this is the only building set for the lattice $B_3$ for which the Chow ring is not Koszul.
\end{example}

\begin{example}[\textbf{A geometric lattice with non-Koszul Chow ring for the minimal building set}] \label{nonKoszulminimalbuildingset}

Even when $\mathcal{G} = \mathcal{G}_{\min}$ is the minimal building set of geometric lattice $\LL$, $D(\LL,\mathcal{G})$ can fail to be Koszul.  Let $C_4$ denote the $4$-cycle and $M = M(C_4)$ its graphic matroid.  The minimal building set of $\LL(M)$ is $\mathcal{G} = \{\{1\},\{2\},\{3\},\{4\},\{1,2,3,4\}\}$.  The Chow ring is
\[ 
\frac{\QQ[x_1, x_2,x_3,x_4, x_{1234}]}{(x_1x_2x_3, x_1x_2x_4,x_1x_3x_4, x_2x_3x_4, x_1 + x_{1234}, x_2 + x_{1234}, x_3 + x_{1234},x_4 + x_{1234})} 
\iso
\frac{\QQ[x_{1234}]}{(x_{1234}^3)},
\]
which is also not Koszul.
\end{example}

The Gorenstein property of the (augmented) Chow ring of a matroid seems to play an important role in our methods.  It was already clear from the work of Feichtner and Yuzvinsky that not all Chow rings of atomic lattices were Gorenstein; see \cite[pp. 22-23]{FY04}.  We present a simple example.

\begin{example}[\textbf{A non-geometric lattice with Koszul but not Gorenstein Chow ring}]
 Consider the lattice $\LL$ with Hasse diagram in Figure~\ref{lat1}.  It is easy to check that $\LL$ is atomic but not semimodular.  The Chow ring of $\LL$  with respect to the maximal building set $\mathcal{G}_{\max} = \LL \setminus \{\hat{0}\}$ is
\[ 
D(\LL,\mathcal{G}_{\max}) \iso \frac{\QQ[x_e, x_f, x_{\hat{1}}]}{(x_e, x_f, x_{\hat{1}})^2},
\]
which is Koszul but not Gorenstein.

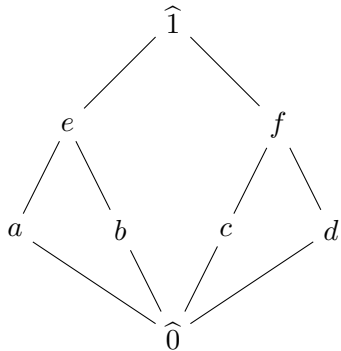
\begin{figure}[ht] 
\begin{center}
\begin{tikzpicture}[scale=.7]
  \node (e) at (-2,2) {$e$};
  \node (f) at (2,2) {$f$};
  \node (one) at (0,4) {$\hat1$};
  \node (a) at (-3,0) {$a$};
  \node (b) at (-1,0) {$b$};
  \node (c) at (1,0) {$c$};
  \node (d) at (3,0) {$d$};
  \node (zero) at (0,-2) {$\hat0$};
  \draw (zero) -- (a) -- (e) -- (one) -- (f) -- (d) -- (zero) ;
  \draw (e) -- (b) -- (zero) -- (c) -- (f);
\end{tikzpicture}
\caption{An atomic lattice $\LL$ with non-Koszul Chow ring $\Chow(\LL,\mathcal{G}_{\max})$.}\label{lat1}
\end{center}
\end{figure}

\end{example}

The preceding examples naturally lead to the following questions.

\begin{question}
Under what combinatorial conditions on the building set $\mathcal{G}$ and the lattice $\LL$ is the Chow ring $D(\LL, \mathcal{G})$ Koszul?  Is there a  Chow ring of a geometric lattice that fails to be Koszul with respect to some building set while still being quadratic?
\end{question}

\begin{question}
Is the Chow ring of any graded, atomic lattice with respect to its maximal building set always Koszul?
\end{question}

While Dotsenko showed that $D(\Pi_n,\mathcal{G}_{\mathrm{min}})$ has a quadratic Gr\"obner basis, a similar monomial order on $\Chow(M) = D(\LL(M),\mathcal{G}_{\mathrm{max}})$ does not seem to produce a quadratic Gr\"obner basis.  On the other hand, there does not appear to be a Hilbert function obstruction to one, leaving open the following question.

\begin{question}
Is the Chow ring of a matroid G-quadratic?
\end{question}

We close with a potential generalization of our results.

\begin{question}
Does every quadratic, Artinian, Gorenstein $\kk$-algebra with the K\"ahler package have the Koszul property?
\end{question}

It follows from \cite[Theorem 4.3]{MS20} that not all quadratic, Gorenstein $\kk$-algebras have the K\"ahler package; in particular, there are quadratic Gorenstein $\kk$-algebras with non-unimodal $h$-vectors.

\section*{Acknowledgements}
The authors thank Emanuele Delucchi, Vladimir Dotsenko, Chris Eur, Vic Reiner, and Botong Wang for many helpful conversations. 
Computations with Macaulay2 \cite{M2}, especially with Justin Chen's matroid package \cite{Che19}, were very helpful while working on this project.  Mastroeni was supported by an AMS-Simons Travel Grant.  McCullough was supported by National Science Foundation grant DMS--1900792. 
\end{spacing}

\bibliographystyle{alpha}
\bibliography{chow}

\end{document}